\newcommand{\dell}{\partial}
\newcommand{\Ndell}[2]{\rdell^{#1}\angdell^{\underline{#2}}}
\newcommand{\comm}[2]{\left[#1,#2\right]}
\newcommand{\X}[1]{\mathcal{X}^{#1}}
\newcommand{\Y}[3]{\mathcal{Y}^{#1}_{#2}(#3)}
\newcommand{\angdell}{\cancel{\partial}}
\newcommand{\rdell}{\Lambda}
\newcommand{\cov}{\mathcal{M}}
\newcommand{\inv}{\mathcal{A}}
\newcommand{\jac}{\mathcal{J}}
\newcommand{\ncov}{\mathscr{M}}
\newcommand{\ninv}{\mathscr{A}}
\newcommand{\njac}{\mathscr{J}}
\newcommand{\pjac}{\mathscr{J}^{-1/\alpha}}
\newcommand{\spaceI}{\int_{\Omega}}
\newcommand{\twopartdef}[4]
   {
    \left\{
      \begin{array}{ll}
        #1 & \text{if } #2 \\
        #3 & \text{if } #4
      \end{array}
    \right.
   }
\DeclarePairedDelimiterX{\inp}[2]{\langle}{\rangle}{#1, #2}
\DeclareMathOperator{\tr}{Tr}
\DeclareMathOperator{\dist}{dist}
\DeclareMathOperator{\supp}{supp}
\DeclareMathOperator{\dive}{div}
\DeclareMathOperator{\curl}{curl}
\DeclareMathOperator{\Curl}{Curl}
\DeclareMathOperator{\grad}{\nabla}
\DeclareMathOperator{\ngrad}{\nabla_{\zeta_{\kappa}}}
\DeclareMathOperator{\ndiv}{div_{\zeta_{\kappa}}}
\DeclareMathOperator{\ncurl}{curl_{\zeta_{\kappa}}}
\DeclareMathOperator{\nCurl}{Curl_{\zeta_{\kappa}}}
\DeclarePairedDelimiter\ceil{\lceil}{\rceil}
\newtheorem{definition}{Definition}[section]
\newtheorem{theorem}[definition]{Theorem}
\newtheorem{proposition}[definition]{Proposition}
\newtheorem{lemma}[definition]{Lemma}
\newtheorem{remark}[definition]{Remark}
\def\bcr{\begin{color}{red}}
\def\bcb{\begin{color}{blue}}
\def\ec{\end{color}}
\title{Global Existence for the $N$ Body Euler-Poisson System}
\author{Shrish Parmeshwar\footnote{Department of Mathematics, King's College London, Strand, London WC2R 2LS, UK}}
\date{March 2019}
\begin{document}

\maketitle

\begin{abstract}
	In this paper we investigate the problem of multiple expanding Newtonian stars that interact via their gravitational effect on each other. It is clear physically that if two stars at rest are separated initially, and start expanding as well as moving according to the laws of Newtonian gravity, they may eventually collide. Thus, one can ask whether each star can be given an initial position and velocity such that they can keep expanding without touching. We show that even with gravitational interaction between the bodies, a large class of initial positions and velocities give global-in-time solutions to the $N$ Body Euler-Poisson system. To do this we use a scaling mechanism present in the compressible Euler system shown in \cite{PaHaJa} and a careful analysis of how the gravitational interaction between stars affects their dynamics.
\end{abstract}

\section{Introduction}
The compressible Euler-Poisson system for an inviscid, isentropic, ideal gas, acting under the influence of its own gravity, in its most basic form, is given by
\begin{align}
\dell_{t}\rho+\nabla\cdot \left(\rho u\right)&=0,\label{general-euler-1}\\
\rho\left(\dell_{t}+u\cdot\nabla\right)u+\nabla p+\rho\nabla\phi&=0,\label{general-euler-2}\\
\Delta\phi=4\pi\rho,\ \ \lim\limits_{|x|\rightarrow\infty}\phi(t,x)&=0,\label{general-euler-3}
\end{align}
where $\rho, u$, $p$, and $\phi$ are the fluid density, velocity, pressure, and gravitational potential respectively. Newton's gravitational constant $G$ will be set to $1$ here and hereafter in this work.
\begin{remark}
	Investigating the properties of stars modelled as bodies of fluids is a classical problem in astrophysics, with a long history. In the case of one body of incompressible fluid moving under its own gravity, equilibrium states have been studied by the likes of Newton, Maclaurin, Jacobi, Poincar\'{e}, and others. See, for example, $\cite{Ch3}$ for a summary of these results, and further context regarding this problem.
\end{remark}
\begin{remark}\label{plasma-case-remark}
	Note that if instead we had $\Delta\phi=-4\pi\rho$, $\phi$ would be referred to as the electrostatic potential, and the above would  be a toy model of a plasma instead of a Newtonian star. See $\cite{Guo1,GuoPau}$ for more details on the mathematical theory of plasmas.
\end{remark}
In addition to the basic system $\eqref{general-euler-1}$--$\eqref{general-euler-3}$, we will always work with a polytropic equation of state between the pressure $p$ and the density $\rho$:
\begin{align}
p(\rho)=\rho^{\gamma},\label{equation-of-state}
\end{align}
for some $\gamma>1$, the adiabatic index. The system $\eqref{general-euler-1}$--
%$\eqref{general-euler-3}$, along with %the polytropic equation of state 
$\eqref{equation-of-state}$ is one of the most fundamental and commonly used models of a Newtonian star (\cite{Ch2,ZelNov,BiTr}). In addition, the equation of state guarantees the system is not underdetermined. A famous class of solutions to $\eqref{general-euler-1}$--$\eqref{equation-of-state}$ are the so called Lane-Emden stars, obtained by looking for radially symmetric, time independent density profiles, and vanishing velocity:
\begin{align}
(\rho(t,x),u(t,x))=(\bar{\rho}(r),0).\label{lane-emden-solutions}
\end{align}
In the range $6/5 < \gamma < 2$, it is known that there always exists a solution with finite mass and compact support, whilst for $\gamma=6/5$ there exists a solution with finite mass and infinite support (\cite{Ch2,ZelNov,BiTr,Jang2014}). The fundamental problem of establishing linear or nonlinear stability for these solutions has also been investigated. Lin \cite{Lin} established linear instability for $\gamma\in(1,4/3)$, and linear stability for $\gamma\in[4/3,2)$. It is also known that at $\gamma=4/3$, the Lane-Emden star solution is nonlinearly unstable (\cite{GolWeb,MakPer,DengLiuYangYao,Rein1,Jang2014,HaJa2}). Jang \cite{Jang2008,Jang2014} showed that for $\gamma\in[6/5,4/3)$, the Lane-Emden stars are nonlinearly unstable, whilst stability results for the range $(4/3,2)$ conditional on the existence of solutions close to the stationary ones have been shown in \cite{Rein1,LuoSmol2}.

In the mass critical case of $\gamma=4/3$, Goldreich and Weber \cite{GolWeb} exhibited another special class of spherically symmetric solutions that could either expand indefinitely, or collapse in finite time. These solutions are self similar with respect to the same rescaling under which $4/3$ is the mass critical exponent, and have $0$ energy. In further works for the same value of $\gamma$, Makino \cite{Mak2}, and Fu and Lin \cite{FuLin} constructed related solutions that also expand or collapse. Had\v{z}i\'{c} and Jang \cite{HaJa2} showed the nonlinear stability of these solutions in the expanding case under spherically symmetric perturbations. Corresponding solutions for the noninsentropic case, and spatial dimensions $\geq3$, have been constructed by Deng, Xiang, and Yang \cite{DengXiangYang}.

Another important family of solutions, that of axisymmetric rotating stars in equilibrium, has long been a subject of interest, with efforts to construct such stars beginning in the astrophysics community in the early 20th century, see \cite{Milne,Von,Ch1,Lich}. Auchmuty and Beals \cite{AuchBeals1,AuchBeals2} rigorously constructed solutions using a variational method. Given a prescribed angular velocity, or angular momentum per unit mass, they found axisymmetric densities $\rho$ that satisified the time independent version of the Euler-Poisson system $\eqref{general-euler-1}$--$\eqref{general-euler-3}$, for $\gamma>4/3$ (amongst other, non polytropic equations of state). Auchmuty \cite{Auch} showed the existence of equilibrium rotating solutions in the incompressible case using these methods. Li \cite{Li} also used these variational techniques to find related solutions with constant angular velocity, a case which was excluded by the assumptions made in the work of Auchmuty and Beals. McCann \cite{McCann} has shown that one can construct rotating binary star solutions in equilibrium using variational principles. Results have also been established on the shape of the support of these solutions, see \cite{CafFrie,FrieTurk1,FrieTurk2,ChanLi,ChanWeiss}. For further works in the variational framework, see \cite{Lion,FrieTurk3,LuoSmol1,LuoSmol2,Wu1,Wu2}, and references within, as well as work by Federbush, Luo, and Smoller \cite{FedLuoSmol}, in the case of magnetic stars.

Jang and Makino \cite{JangMak1,JangMak2}, and Strauss and Wu \cite{StraussWu1,StraussWu2}, who built on techniques first employed by Lichtenstein \cite{Lich} and Heilig \cite{Hei}, have also constructed axisymmetric rotating solutions, by making use of the Implicit Function Theorem. This method allows for solutions with $\gamma>4/3$, and in the works by Strauss and Yu, they in fact construct a continuous family of solutions parameterised by the prescribed angular velocity. In addition, Jang, Strauss, and Wu \cite{JangStraussWu} used these techniques to construct a rotating magnetic star.

\subsection{The Euler-Poisson System for $N$ Stars}

The model we work with describes $N$ Newtonian stars moving under both the influence of their own gravity, and that of the other stars, in the \emph{free boundary settting}. Let $N\geq2$ be a natural number. For each $\kappa=1,2,\dots,N$, we have
\begin{align}
\dell_{t}\rho_{\kappa}+\nabla\cdot \left(\rho_{\kappa}u_{\kappa}\right)=0\ \ \ \ &\text{in}\ \Omega_{\kappa}(t),\label{pre-euler-1}\\
\rho_{\kappa}\left(\dell_{t}+u_{\kappa}\cdot\nabla\right)u_{\kappa}+\nabla p_{\kappa}+\rho_{\kappa}\nabla\phi=0\ \ \ \ &\text{in}\ \Omega_{\kappa}(t),\label{pre-euler-2}\\
p_{\kappa}=0\ \ \ \ &\text{on}\ \dell\Omega_{\kappa}(t),\label{pre-euler-3}\\
\Delta\phi=4\pi\rho,\ \ \lim\limits_{|x|\rightarrow\infty}\phi(t,x)=0\ \ \ \ &\text{on}\ \mathbb{R}^{3},\label{pre-euler-4}\\
\mathcal{V}\left(\dell\Omega_{\kappa}(t)\right)=u_{\kappa}\cdot n_{\kappa}\ \ \ \ &\text{on}\ \dell\Omega_{\kappa}(t),\label{pre-euler-5}\\
\left(\rho_{\kappa},u_{\kappa}\right)=\left(\mathring{\rho}_{\kappa},\mathring{u}_{\kappa}\right)\ \ \ \ &\text{in}\ \Omega_{\kappa}(0).\label{pre-euler-6}
\end{align}

\begin{remark}\label{kappa-indexing-only-remark}
	Here $\kappa$ is introduced as an index variable ranging from $1$ to $N$. Throughout this work, indexing the star bodies will be the one and only use of $\kappa$, and conversely, indexing of the separate stars will only be done with $\kappa$. In particular there will be no summation convention over $\kappa$.
\end{remark}

Analogously to $\eqref{general-euler-1}$--$\eqref{general-euler-3}$, the $\rho_{\kappa}$, $u_{\kappa}$, and $p_{\kappa}$ are the densities, velocities, and pressures for each separate star. As mentioned in $\eqref{equation-of-state}$, we have a polytropic equation of state for each $\kappa$ given by
\begin{align}
p_{\kappa}(\rho_{\kappa})=\rho_{\kappa}^{\gamma},\label{equation-of-state-kappa}
\end{align}
where $\gamma>1$ is constant, and the same for each $\kappa$.

\begin{remark}\label{all-gamma-equal} It is not crucial for each $\gamma$ to be the same for each equation of state, i.e. $\gamma$ could depend on $\kappa$. However, it simplifies the analysis at certain points.
\end{remark}

Comparing $\eqref{general-euler-1}$--$\eqref{equation-of-state}$ to $\eqref{pre-euler-1}$--$\eqref{equation-of-state-kappa}$, we see that in the latter system, for each fixed $t$, the continuity and momentum equations for each $\kappa$ are only required to hold on the support of $\rho_{\kappa}(t,\cdot)$, namely $\Omega_{\kappa}(t)$, instead of on all of $\mathbb{R}^{3}$ as in the former system. In addition, the support, and therefore its boundary is now a dynamic object whose evolution we track in our framework, making this model a \emph{vacuum free boundary problem}.

The $\rho_{\kappa}$ are non-negative functions $\rho_{\kappa}:[0,\infty)\times\mathbb{R}^{3}\rightarrow\mathbb{R}_{\geq0}$, with supports $\supp{\rho_{\kappa}(t,\cdot)}=\Omega_{\kappa}(t)$. The boundary of $\Omega_{\kappa}(t)$ is given by $\dell\Omega_{\kappa}(t)$. We also have the initial domains and their boundaries
\begin{align}
\Omega_{\kappa}&\coloneqq\Omega_{\kappa}(0),\label{initial-domains}\\
\dell\Omega_{\kappa}&\coloneqq\dell\Omega_{\kappa}(0).\label{initial-domains-boundary}
\end{align}
Since we are looking at stars that are separated from each other initially, physically we must have that the $\Omega_{\kappa}$ are \emph{mutually disjoint} and \emph{compact}. The cumulative density function $\rho:[0,\infty)\times\mathbb{R}^{3}\rightarrow\mathbb{R}_{\geq0}$ is defined by
\begin{align}
\rho=\sum_{\kappa=1}^{N}\rho_{\kappa},\label{definition-cumulative-density}
\end{align}
and $\phi$ is the cumulative gravitational potential. For each fixed $t$, the domain of the velocity field at time $t$ is given by the support of the density at time $t$:
\begin{align*}
u_{\kappa}(t,\cdot):\Omega_{\kappa}(t)&\rightarrow\mathbb{R}^{3},\\
x&\mapsto u_{\kappa}(t,x).
\end{align*}
Finally, the quantity $\mathcal{V}(\dell\Omega_{\kappa}(t))$ is the outward normal velocity of $\dell\Omega_{\kappa}(t)$ and $n_{\kappa}$ is the outward unit normal on $\dell\Omega_{\kappa}(t)$.

To construct a robust well-posedness theory for the moving vacuum boundary Euler-Poisson system, it is crucial to include the {\em physical vacuum boundary condition}.
For each $\kappa$, the {\em speed of sound} $c_{\kappa}$ is given by
\begin{align}
c_{\kappa}^{2}=\frac{d p_{\kappa}}{d\rho_{\kappa}}=\gamma\rho_{\kappa}^{\gamma-1},\ \ \ \mathring{c}_{\kappa}^{2}(x)=c_{\kappa}^{2}(0,x).\label{speed-of-sound}
\end{align}
Then the physical vacuum boundary condition reads
\begin{align}
-\infty <\frac{\dell \mathring{c}_{\kappa}^{2}}{\dell \mathring{n}_{\kappa}}\bigg\rvert_{\dell\Omega_{\kappa}} < 0,\label{vacuum-boundary-condition}
\end{align}
where $\mathring{n}_{\kappa}$ is the outward unit normal on $\dell\Omega_{\kappa}$. Condition $\eqref{vacuum-boundary-condition}$ implies that for some constant $C$
\begin{align}
\frac1C \text{dist}(x,\partial\Omega_{\kappa}) \le \mathring{c}_{\kappa}^{2}(x) \le C  \text{dist}(x,\partial\Omega_{\kappa}),\label{speed-of-sound-distance function}
\end{align}
which in turn implies that $\mathring{c}_{\kappa}$ is only H\"{o}lder continuous of exponent $1/2$ at the boundary $\Omega_{\kappa}$. We note that as well as being a key component for the local theory of our system, the physical vacuum condition also naturally occurs in Lane-Emden stars \cite{Jang2014}. Liu \cite{Liu} gave an argument suggesting that the physical vacuum condition was natural in the context of the Euler system with damping. In general, the physical vacuum condition, and the role it plays in understanding the interaction of fluids with vacuum regions has been a subject of great interest, see \cite{LiuSmol,LiuYang1,LiuYang2,CouLindShk,CoSh2011,CoSh2012,JaMa2009,JaMa2011,JaMa2012,JaMa2015,LuoXinZeng,Serre2,HaJa3,HaJa4}.

When we view the problem on $\mathbb{R}^{3}$, any sufficiently regular nontrivial spherically symmetric solution with compact initial data has a finite time of existence \cite{MakUkai,MakPer}, with the same holding for any nontrivial solution for the Euler system without gravitation \cite{MakUkaiKawa}. Results on singularity formulation for the Euler system have also been established by Sideris \cite{Sid1}. The solutions in \cite{MakUkai,MakPer} cannot satisfy the physical vacuum boundary condition $\eqref{vacuum-boundary-condition}$, as the initial speed of sound $\mathring{c}$ is continuously differentiable on the boundary.

In the case of the physical vacuum free boundary, well-posedness theories for the compressible Euler system were developed by Coutand and Skholler \cite{CoSh2012}, and Jang and Masmoudi \cite{JaMa2015} independently. In our work, we adapt the techniques of Jang and Masmoudi. This theory is readily adapted to the Euler-Poisson system for the case $N=1$ as the potential term $\rho\nabla\phi$ is lower order, with respect to derivative count, to the top order pressure term $\nabla p$. We shall see in Appendix $\ref{local-well-posedness}$ that the local-in-time theory for $N\geq2$ does not require much adjustment in the theory either.

It is natural to look for expanding solutions to the Euler-Poisson system. Indeed, it has been shown that global-in-time solutions to the Euler system must have supports whose diameters grow at least linearly in time, with the same being true for solutions to the Euler-Poisson system with positive energy when $\gamma\geq4/3$, see \cite{HaJa1,Sid2}. Note that equilbrium solutions like the ones mentioned in the introduction are not considered in the class of time dependent solutions.

In the vacuum free boundary setting for the Euler system without gravitation, a finite parameter family of expanding global-in-time solutions has been found by Sideris \cite{Sid2,Sid3}, relying on an \emph{affine} ansatz on the Lagrangian flow map, allowing him to reduce the problem to one of solving a system of ODEs. The corresponding solutions are a finite dimensional family of compactly supported, expanding stars. Using an affine ansatz to construct solutions for compressible fluid flow is a technique that goes back to Ovsiannikov \cite{Ov1956} and Dyson \cite{Dyson1968}. Had\v{z}i\'{c} and Jang \cite{HaJa3} showed nonlinear stability of the solutions constructed in \cite{Sid3} for the range $\gamma\in(1,5/3]$, after which Sideris and Shkoller \cite{ShSi2017} established the corresponding result for $\gamma>5/3$. In the nonisentropic case, nonlinear stability of the Sideris solutions was established by Rickard, Had\v{z}i\'{c}, and Jang \cite{RicHaJa}.

In addition, Had\v{z}i\'{c} and Jang \cite{HaJa4} also showed that small perturbations of the Sideris affine solutions give rise to solutions to the vacuum free boundary Euler-Poission system, utilising a scaling structure in the system that allowed them to construct these solutions under the assumption of small densities. In general their methods require $\gamma\in(1,5/3)$. In this range, the gravitational potential terms are subcritical with respect to the pressure term, meaning that the pressure term will dominate the dynamics of the star. This is part of what allowed them to peturb solutions of the Euler system to find their solutions of the Euler-Poisson system. More specifically, their range of $\gamma$ was restricted to $\{1+\frac{1}{n}|n\in\mathbb{Z}_{\geq2}\}\cup(1,14/13)$. Note that these restrcitions, apart from being a subset of $(1,5/3)$, are largely technical and related to estimating the gravitational potential terms.

The author, along with Had\v{z}i\'{c} and Jang \cite{PaHaJa}, then showed that one can construct a family of global-in-time expanding solutions to the vacuum free boundary Euler and Euler-Poisson systems without appealing to an affine type ODE reduction. The main tool was utilising a scaling structure in the nonlinearity of the momentum equation that produced a stabilising effect. This, as well as the scaling exhibited in \cite{HaJa4}, was used to construct solutions with small densities. Analogously to \cite{HaJa4}, the range of $\gamma$ for the Euler-Poisson system is $\{1+\frac{1}{n}|n\in\mathbb{Z}_{\geq2}\}\cup(1,14/13)$. Despite the smallness condition, the initial densities in \cite{PaHaJa} have a wide class of possible profiles, whereas those associated with the solutions exhibited in \cite{Sid3} have a very specific form.

In the absence of a free boundary, global-in-time expanding solutions have been found by Serre \cite{Serre1997}, Grassin \cite{Grassin98}, and Rozanova \cite{Roz}. Serre found solutions by perturbing around linear velocity profiles, an idea which Grassin generalised. Rozanova showed the existence of related solutions without the small density assumption made by Grassin.

With the polytropic equations of state in $\eqref{equation-of-state-kappa}$, our system becomes, for $\kappa=1,2,\dots,N$,
\begin{align}
\dell_{t}\rho_{\kappa}+\nabla\cdot \left(\rho_{\kappa}u_{\kappa}\right)=0\ \ \ \ &\text{in}\ \Omega_{\kappa}(t),\label{euler-1}\\
\rho_{\kappa}\left(\dell_{t}+u_{\kappa}\cdot\nabla\right)u_{\kappa}+\nabla\left(\rho_{\kappa}^{\gamma}\right)+\rho_{\kappa}\nabla\phi=0\ \ \ \ &\text{in}\ \Omega_{\kappa}(t),\label{euler-2}\\
\rho_{\kappa}=0\ \ \ \ &\text{on}\ \dell\Omega_{\kappa}(t),\label{euler-3}\\
\Delta\phi=4\pi\rho,\ \ \lim\limits_{|x|\rightarrow\infty}\phi(t,x)=0\ \ \ \ &\text{on}\ \mathbb{R}^{3},\label{euler-4}\\
\mathcal{V}\left(\dell\Omega_{\kappa}(t)\right)=u_{\kappa}\cdot n_{\kappa}\ \ \ \ &\text{on}\ \dell\Omega_{\kappa}(t),\label{euler-5}\\
\left(\rho_{\kappa},u_{\kappa}\right)=\left(\mathring{\rho}_{\kappa},\mathring{u}_{\kappa}\right)\ \ \ \ &\text{in}\ \Omega_{\kappa}.\label{euler-6}
\end{align}
We shall refer to the system $\eqref{euler-1}$--$\eqref{euler-6}$ with the physical vacuum condition $\eqref{vacuum-boundary-condition}$ as $\textbf{EP}(N,\gamma)$. If we set
\begin{align}
w_{\kappa}(x): =\mathring{\rho}_{\kappa}^{\gamma-1}(x).\label{w-definition}
\end{align}
then%~\eqref{vacuum-boundary-condition} in particular implies that there exists a constant $C>0$ such that 
\begin{align}
\frac1C \text{dist}(x,\partial\Omega_{\kappa}) \le w_{\kappa}(x) \le C  \text{dist}(x,\partial\Omega_{\kappa})
\end{align}
in the vicinity of the initial vacuum boundary $\partial\Omega_{\kappa}$. The quantity $\sum_{\kappa=1}^{N}w_{\kappa}$ is proportional to the enthalpy of the system and the $\{w_{\kappa}|\kappa=1,2,\dots,N\}$ will
play a very important role in our analysis. For future simplicity, define
\begin{align}
\alpha=\frac{1}{\gamma-1},\label{alpha-definition}
\end{align}
so that $\mathring{\rho}_{\kappa}= w_{\kappa}^\alpha$.

By contrast to the classical $N$ Body problem, the stars described by our system are subjected to tidal forces which deform the geometry of their supports, and hence they can not be idealised as point particles. The classical problem consists of looking for solutions to the system of $N$ point particles interacting with each other via Newtonian gravity:

\begin{align}
\frac{d^{2}x_{i}}{dt^{2}}=-\sum_{\substack{1\leq i,j\leq N\\i\neq j}}m_{i}\frac{x_{i}-x_{j}}{\left|x_{i}-x_{j}\right|^{3}}\ \ \ x_{i}(0)=\bar{x}_{i}\ \ \ \dot{x}_{i}(0)=v_{i},\label{gravitational-n-body-problem}
\end{align}
where the $x_{i}(t)$ are the particle positions at time $t$, the $m_{i}$ are the particle masses, $\bar{x}_{i}$ are their initial positions, and $v_{i}$ are their initial velocities. Much like the subject of stellar dynamics, the $N$ body problem has a long and rich history; for more on the history and context, see \cite{SieMos,Heggie} and references within.

In our context, as mentioned previously, McCann \cite{McCann} exhibited solutions to the stationary compressible Euler-Poisson system that corresponded to a rotating binary star sysem. Miao and Shahshahani \cite{MiaoShah} showed that in the case of the $2$ body problem for the incompressible free boundary Euler-Poisson system, one can start with initial configurations that would correspond to hyperbolic orbit in the point particle case, and obtain bounded orbits. The reason is that the fluid bodies will naturally deform due to their gravitational effects on each other, and this results in a loss of energy.

In the classical $N$ Body problem $\eqref{gravitational-n-body-problem}$, one can ask what happens to the dynamics of each particle if we start them far away from each other (in some suitable sense), and also point their initial velocities away from each other. We might expect that if the initial distances are large enough, with initial velocities pointing in suitable directions, their dynamics will essentially decouple, and each will travel on a path of constant velocity up to small error. We will show in our work that this type of initial configuration can lead to global-in-time solutions even in the case of the $N$ Body of Euler-Poisson system.

Explicitly, we exhibit an open set (in a suitable topology) of initial positions and velocities that lead to global-in-time solutions of $\textbf{EP}(N,\gamma)$, with each star asymptotically behaving like an expanding star moving with constant velocity. This is further discussed in Sections $\ref{flow-map-section}$, $\ref{main-result-section}$. Here we state a rough version of our theorem.
\begin{theorem}[Main result: informal statement]\label{main-result-rough}
	Let $\gamma=1+\frac{1}{n}$ for $n\in\mathbb{Z}_{\geq2}$ or $\gamma\in(1,14/13)$. Then there exist open sets of initial positions and velocities for each star in the free boundary $N$-body compressible Euler-Poisson system $\textbf{EP}(N,\gamma)$ which lead to global-in-time solutions.
\end{theorem}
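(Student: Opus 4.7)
The plan is to attack this in Lagrangian coordinates, one star at a time, writing each flow map $\eta_\kappa$ as a suitable modulation of the single-star expanding solution constructed in \cite{PaHaJa}, plus a translating center of mass, plus a small perturbation. Concretely, for each $\kappa$ I would look for an ansatz of the form
\begin{align*}
\eta_\kappa(t,y) = c_\kappa(t) + \lambda(t)\,\Phi_\kappa(t,y) + \text{corrections},
\end{align*}
where $\lambda(t)\sim t$ is the universal scaling factor driving each star's radial expansion, $\Phi_\kappa$ is the PaHaJa-type expanding profile tailored to the initial density $\mathring{\rho}_\kappa = w_\kappa^\alpha$, and $c_\kappa(t)$ is a modulation parameter representing the center of the $\kappa$-th body, expected to satisfy $c_\kappa(t)\sim v_\kappa t + \text{const}$ asymptotically. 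The initial data $(\bar x_\kappa, v_\kappa)$ play the role of $(c_\kappa(0),\dot c_\kappa(0))$, and I would require that the $v_\kappa$ are chosen ``outward enough'' and the $\bar x_\kappa$ are far enough apart that the classical point-particle toy problem \eqref{gravitational-n-body-problem} with the same data has $|\bar x_\kappa - \bar x_{\kappa'}|(t)\gtrsim t$ for large $t$; this is the open condition on initial data in Theorem \ref{main-result-rough}.

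Next I would split the gravitational potential $\phi = \sum_{\kappa} \phi_\kappa$ where $\Delta \phi_\kappa = 4\pi\rho_\kappa$, so that in the momentum equation for the $\kappa$-th body the self-term $\rho_\kappa\nabla\phi_\kappa$ is treated exactly as in the one-body analysis of \cite{PaHaJa,HaJa4} (which is what constrains $\gamma\in\{1+\tfrac1n\}\cup(1,14/13)$), while the cross-terms $\rho_\kappa\nabla\phi_{\kappa'}$, $\kappa'\neq\kappa$, become the new ingredient. The key observation is that on $\Omega_\kappa(t)$ the potential $\phi_{\kappa'}$ of a distant star can be Taylor-expanded around $c_\kappa(t)$: the leading piece is the constant Newtonian attraction $-M_{\kappa'}(x-c_{\kappa'}(t))/|c_\kappa(t)-c_{\kappa'}(t)|^3$, which precisely sources an ODE for $c_\kappa(t)$ of point-particle form, and the remainder (tidal) is smaller by an extra factor of $\text{diam}(\Omega_\kappa(t))/|c_\kappa-c_{\kappa'}|\sim t/t = O(1)$ in size but $O(1/t^3)$ in magnitude. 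Under the bootstrap separation $|c_\kappa-c_{\kappa'}|\gtrsim t$, all cross-terms decay like $t^{-2}$, which is integrable after being paired with the decaying weights coming from the $\lambda(t)$ rescaling.

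The main work is then a weighted high-order energy estimate for the perturbation, performed in Lagrangian coordinates adapted to each $\Omega_\kappa$, exactly parallel to the one-body estimates of \cite{PaHaJa} but driven now by a coupled system: the $N$ flow-map perturbations together with the $N$ ODEs for $\{c_\kappa(t)\}$. The energy is the sum of the single-star energies built with the weights $w_\kappa$, plus a classical mechanical energy in the $c_\kappa$. I would close a bootstrap with two clauses: (i) the single-star perturbation energies stay small, and (ii) $|c_\kappa-c_{\kappa'}|(t)\geq \tfrac12|v_\kappa-v_{\kappa'}|t$ for $t$ large. Clause (i) is the PaHaJa estimate with source terms of size $\lambda(t)^{-k}\cdot t^{-2}$ coming from the tidal remainders, which is integrable in $t$ thanks to the excess decay; clause (ii) follows by Gronwall on the $c_\kappa$ ODEs, whose forcing is $O(t^{-2})$ once (i) is assumed.

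The principal obstacle, and the reason this is not a formal corollary of \cite{PaHaJa}, is the delicate coupling of (i) and (ii): the functional-analytic perturbation estimates require a uniform positive lower bound on the separations to even make sense (otherwise $\nabla\phi_{\kappa'}$ is not regular on $\Omega_\kappa(t)$), while the separation estimates require control on the tidal deformation of each body to justify treating $\phi_{\kappa'}$ as a mild perturbation of a point mass at $c_{\kappa'}(t)$. I would handle this by a continuous-induction/continuity argument: define $T^*$ as the supremum of times on which both (i) and (ii) hold with prefactors strictly smaller than the bootstrap constants, use the local well-posedness from Appendix \ref{local-well-posedness} to know $T^*>0$, and then use the two estimates above to show the bootstrap constants improve strictly, hence $T^*=\infty$. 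Openness in the initial data is inherited from openness of the classical separation condition at time $0$ together with continuous dependence of the Lagrangian system on initial data, which is standard once local well-posedness is in hand.
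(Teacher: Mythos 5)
Your overall architecture (Lagrangian coordinates, perturbation of the one--body expanding solutions of \cite{PaHaJa}, splitting the potential into self--interaction and cross terms, closing with a two--clause bootstrap) matches the paper's skeleton, but you deviate in one structural respect and you have two concrete errors that would derail the execution. The structural deviation: the paper does \emph{not} modulate the centres. There is no ODE for $c_{\kappa}(t)$ sourced by the monopole attraction; instead the centre trajectories are prescribed once and for all as straight lines via the ansatz $\eqref{eta-pre-ansatz}$, $\eta_{\kappa}=(x+\bar{x}_{\kappa})+t(x+\mu_{\kappa}(x))+(t+1)\theta_{\kappa}$, and the \emph{entire} inter-body force $\mathscr{I}_{[\kappa,\kappa']}$ --- monopole included --- is absorbed into the error $\theta_{\kappa}$ and shown in Proposition $\ref{tidal-term-estimates-proposition}$ to be $O(\delta^{2\alpha}e^{-2\tau})$. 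This is legitimate only because the masses carry the factor $\delta^{\alpha}$, so the cumulative velocity deflection it induces is small; the smallness of $\delta$ is doing the work you assign to the modulation ODE. Your modulation route is not absurd, but it creates problems you do not address (how $c_{\kappa}$ is fixed, i.e.\ what orthogonality condition selects the decomposition, and whether the corrections still enjoy the scaling structure of Remark $\ref{scaling-leads-to-stabilising-mechanism}$ that produces the damping), and --- as the next paragraph explains --- it buys you nothing here.

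The two errors. First, your claim that the tidal remainder after Taylor--expanding $\phi_{\kappa'}$ about $c_{\kappa}(t)$ is ``$O(1/t^{3})$ in magnitude'' is inconsistent with your own observation that $\mathrm{diam}(\Omega_{\kappa}(t))/|c_{\kappa}-c_{\kappa'}|\sim t/t=O(1)$: the remainder is $O(M\,\mathrm{diam}/d^{3})=O(M/t^{2})$, the \emph{same} order as the monopole. The multipole expansion therefore yields no gain, the bodies cannot be idealised as point particles even to leading order, and the only viable strategy is the paper's: bound the whole of $\mathscr{I}_{[\kappa,\kappa']}$ using the small mass and the $e^{-2\tau}$ decay. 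Second, and more seriously, your separation hypothesis is too weak. Because each support expands with diameter $\sim 2(1+t)$ while the centres separate only linearly, a bound of the form $|c_{\kappa}-c_{\kappa'}|(t)\gtrsim t$ (or $\geq\tfrac12|v_{\kappa}-v_{\kappa'}|t$ for large $t$) does not prevent the supports from overlapping, in which case $\nabla\phi_{\kappa'}$ is singular on $\Omega_{\kappa}(t)$ and every estimate collapses. What is needed is that the relative velocity of the centres \emph{strictly exceeds the sum of the expansion rates at every intermediate time}, which is exactly the content of the Strong Separation Condition of Definition $\ref{strong-separation-condition-definition}$: a lower bound $\geq 3+\varepsilon_{2}$ on \emph{all convex combinations} of the relative initial positions $\bar{x}_{\kappa}-\bar{x}_{\kappa'}$ and relative repulsive velocities $\mu_{\kappa}(x)-\mu_{\kappa'}(z)$, applied with $\lambda=e^{-\tau}$ to give $|\zeta_{\kappa}(\tau,x)-\zeta_{\kappa'}(\tau,z)|\geq1$ uniformly in $\tau$, as in $\eqref{denominator-tidal-term-integral-bound}$. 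This quantitative condition is the paper's key geometric input and is precisely what identifies the open set of admissible data in Theorem $\ref{main-result-rough}$; your proposal never articulates it.
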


\begin{remark}\label{gamma-restrictions}The restriction of possible values of $\gamma$ in our work is for the same reasons as in $\cite{HaJa4,PaHaJa}$, discussed previously in this introduction.
\end{remark}
The plan of the paper is as follows. In Section $\ref{lagrangian-formulation-section}$ we formulate our problem in Lagrangian variables, a technique commonly used to study the physical vacuum free boundary compressible Euler(-Poisson) system. In Section $\ref{rescaling-section}$, we discuss the properties of scaling in the Euler-Poisson system, and how we use these properties to find global-in-time solutions. In Section $\ref{notation-section}$, we fix general notation, and in Section $\ref{energy-function-section}$ we define our energy spaces and higher order energy  and curl functions. In Section $\ref{main-result-a-priori-assumptions-section}$ we state our main result precisely, and list our a priori assumptions. Sections $\ref{gravitational-potential-estimates-section}$, $\ref{curl-estimates-section}$, and $\ref{energy-estimates-section}$ are devoted to estimates of the graviational potential, curl estimates, and energy estimates respectively. Finally, in Section $\ref{proof-of-main-result-section}$, we prove our main result, Theorem $\ref{main-theorem}$.

\section{Lagrangian Formulation}\label{lagrangian-formulation-section}

For simplicity, we will assume that the initial domains $\Omega_{\kappa}$ are closed balls of radius $1$ with centres $\bar{x}_{\kappa}$. We introduce a reference domain:
\begin{align}
\Omega\coloneqq B_{1},\label{material-manifold}
\end{align}
where $B_{1}$ is the closed unit ball centred at $0$ in $\mathbb{R}^{3}$. For $\kappa=1,2,\dots,N$, it is clear that $\Omega_{\kappa}=\Omega+\bar{x}_{\kappa}$.\\

\begin{remark}\label{more-general-initial-domains}The techniques we use to construct global-in-time solutions in this paper work equally well when each $\Omega_{\kappa}$ is a diffeomorphism of a translate of the closed unit ball, close enough to the identity in some norm, with each translation such that the distance bwteen every pair of domains is sufficiently large.
\end{remark}
Following \cite{CoSh2012,JaMa2015}, we utilise Lagrangian coordinates to transform the problem of studying to the free boundary problem $\textbf{EP}(N,\gamma)$ in to one of studying a fixed boundary problem. Define, for each $\kappa=1,2,\dots,N$, the Lagrangian flow map $\eta_{\kappa}$ by
\begin{align}
\eta_{\kappa}:I\times\Omega&\rightarrow\Omega_{\kappa}(t)\nonumber\\
(t,x)&\mapsto\eta_{\kappa}(t,x),\label{flow-maps-domain-range}
\end{align}
which solves the ODE
\begin{align}
\dell_{t}\eta_{\kappa}(t,x)&=u_{\kappa}(t,\eta_{\kappa}(t,x)),\ \ \ t\in I,\nonumber\\
\mathring{\eta}_{\kappa}(x)&=\eta_{\kappa}(0,x)=x+\bar{x}_{k}.\label{flow-maps-ode}
\end{align}
Here $I$ is some time interval. Let
\begin{align}
v_{\kappa}(t,x)=u_{\kappa}(t,\eta_{\kappa}(t,x)),\label{lagrangian-velocity}\\
f_{\kappa}(t,x)=\rho_{\kappa}(t,\eta_{\kappa}(t,x)),\label{lagrangina-density}\\
\cov[\kappa]=\nabla\eta_{\kappa},\label{change-of-variables-matrix}\\
\inv[\kappa]=\left(\cov[\kappa]\right)^{-1},\label{change-of-variables-inverse}\\
\jac_{\kappa}=\det{\cov[\kappa]},\label{change-of-variables-jacobian}\\
a[\kappa]=\jac_{\kappa}\inv[\kappa],\label{change-of-variables-cofactor}\\
\psi_{\kappa}(t,x)=\phi(t,\eta_{\kappa}(x,t)).\label{lagrangian-potential}
\end{align}
We have the following differentiation formulae for $\inv$ and $\jac$:
\begin{align}
\dell\inv[\kappa]^{k}_{i}=-\inv[\kappa]^{k}_{j}\dell\dell_{s}\eta_{\kappa}^{j}\inv[\kappa]^{s}_{i},\ \ \
\dell\jac_{\kappa}=\jac_{\kappa}\inv[\kappa]^{s}_{j}\dell\dell_{s}\eta_{\kappa}^{j},\ \ \ \dell\in\{\dell_{t},\dell_{1},\dell_{2},\dell_{3}\} \label{inverse-jacobian-differentiation-formula}
\end{align}
These identities imply the Piola identity
\begin{align}
	\dell_{j}a[\kappa]^{j}_{i}=0.\label{piola-identity}
\end{align}
For each $\kappa=1,2,\dots,N$, pulling the continuity and momentum equations $\eqref{euler-1}$ and $\eqref{euler-2}$ back via the flow map $\eta_{\kappa}$ gives us, for $i=1,2,3$,
\begin{align}
\dell_{t}f_{\kappa}+f_{\kappa}\inv[\kappa]^{j}_{i}\dell_{j}v_{\kappa}^{i}=0\ \ \ \ \ &\text{in}\ I\times\Omega,\label{pre-lagrangian-euler-1}\\
\dell_{t}v_{\kappa}^{i}+\inv[\kappa]^{j}_{i}\dell_{j}\left(f_{\kappa}^{\gamma}\right)+f_{\kappa}\inv[\kappa]^{j}_{i}\dell_{j}\psi_{\kappa}=0\ \ \ \ \ &\text{in}\ I\times\Omega.\label{pre-lagrangian-euler-2}
\end{align}
Then, applying the formula for $\jac$ in $\eqref{inverse-jacobian-differentiation-formula}$, from $\eqref{pre-lagrangian-euler-1}$ we obtain
\begin{align}
f_{\kappa}(t,x)=\rho_{\kappa}(t,\eta_{\kappa}(t,x))=\mathring{\rho_{\kappa}}(\mathring{\eta_{\kappa}}(x))\jac_{\kappa}^{-1}.\label{lagrangian-density-relation}
\end{align}
Finally, applying $\eqref{piola-identity}$, $\eqref{lagrangian-density-relation}$ to $\eqref{pre-lagrangian-euler-2}$, and recalling from $\eqref{flow-maps-ode}$ that $\dell_{t}\eta_{\kappa}=v_{\kappa}$ our system becomes, for $\kappa=1,2,\dots,N$, and $i=1,2,3$:
\begin{align}
\tilde{w}_{\kappa}^{\alpha}\dell_{tt}\eta_{\kappa}^{i}+\dell_{k}\left(\tilde{w}_{\kappa}^{1+\alpha}\inv[\kappa]^{k}_{i}\jac_{\kappa}^{-1/\alpha}\right)+\tilde{w}_{\kappa}^{\alpha}\inv[\kappa]^{k}_{i}\dell_{k}\psi_{\kappa}=0\ \ \ \ \ \ \ \ \ \ \ &\text{in}\ I\times\Omega,\label{new-lagrangian-euler-1}\\
(v_{\kappa}(t,x),\eta_{\kappa}(t,x))=(\mathring{u}_{\kappa}(\mathring{\eta}_{\kappa}(x)),\mathring{\eta}_{\kappa})\ \ \ \ &\text{in}\ \{t=0\}\times\Omega,\label{new-lagrangian-euler-2}\\
\tilde{w}_{\kappa}=0\ \ \ \ \ \ \ \ \ \ \ &\text{on}\ \dell\Omega,\label{new-lagrangian-euler-3}
\end{align}
where, recalling $w_{\kappa}$ defined in $(\ref{w-definition})$, we define
\begin{align}
\tilde{w}_{\kappa}(x)=\mathring{\rho_{\kappa}}(\mathring{\eta_{\kappa}}(x))^{\gamma-1}=w_{\kappa}(x+\bar{x}_{\kappa}).\label{definition-of-translated-w}
\end{align}
For a more detailed derivation of the system $\eqref{new-lagrangian-euler-1}$--$\eqref{new-lagrangian-euler-3}$, see, for example \cite{JaMa2015}.

\section{Rescaling}\label{rescaling-section}

\subsection{Flow Map}\label{flow-map-section}
In this section we introduce our ansatz for $\eta_{\kappa}$. However, we first give a motivation for the form of our ansatz. In \cite{PaHaJa} the author alongside Had\v{z}i\'{c} and Jang found that in the case of $N=1$, solutions of the form
\begin{align}
\eta(t,x)=(t+1)(x+\theta(\log{(1+t)},x))\label{theta-one-body-definition}
\end{align}
for the Euler and Euler-Poisson systems were global-in-time solutions for small enough $\theta$ (with size measured in a suitable function space), as well as small enough initial density. The suggested form of $\eta$ in $\eqref{theta-one-body-definition}$ is exactly what allows us to take advantage of the nonlinear scaling structure in the Euler, and Euler-Poisson systems. However, we can also view it as encoding the condition that our solution must expand; $\eqref{theta-one-body-definition}$ implies that initially, $u(t,\eta(t,x))=x+\dell_{t}\theta(\log{(1+t)},x)$, and thus if we have sufficient smallness on $\theta$ and its derivatives, the velocity of each particle stays close to the radial direction, which forces expansion.

As discussed in the introduction, in the case of $N\geq2$, if the stars initially are far away from each other, and are then pushed further away, the gravitational interaction between two stars should be small. Thus the motion of each star should be close to that of constant velocity, as in \cite{PaHaJa}, with the velocity essentially decomposing in to two main parts; one driving repulsion, and one driving expansion. For each $\kappa=1,2,\dots,N$, we define the \emph{error} $\theta_{\kappa}$ and \emph{repulsive velocity} $\mu_{\kappa}$ by the following relation
\begin{align}
\eta_{\kappa}(t,x)=(x+\bar{x}_{\kappa})+t(x+\mu_{\kappa}(x))+(t+1)\theta_{\kappa}(\log{(1+t)},x))\label{eta-pre-ansatz}
\end{align}
This form of $\eta_{\kappa}$ encodes the fact that the particles should have a radial velocity to force expansion, and a repulsive velocity, to force the stars away from each other.

As in \cite{HaJa3,PaHaJa}, we define a new logarithmic timescale
\begin{align}
\tau=\log{(1+t)},\label{tau-definition}
\end{align}
and under this transformation, we can write $\eta_{\kappa}$ as
\begin{align}
	\eta_{\kappa}(t,x)=e^{\tau}\left(x+e^{-\tau}\bar{x}_{\kappa}+(1-e^{-\tau})\mu_{\kappa}(x)+\theta_{\kappa}(\tau,x)\right)=e^{\tau}\zeta_{\kappa}(\tau,x),\label{eta-ansatz}
\end{align}
where $\zeta_{\kappa}$ is defined for notational convenience. Under this ansatz, we will prove that solutions to $\eqref{new-lagrangian-euler-1}$--$\eqref{new-lagrangian-euler-3}$ with small enough $\theta_{\kappa}$, as well as some more technical assumptions on $\mu_{\kappa}$, are global-in-time.

\begin{remark}\label{tau-justification}
	Note that the logarithmic timescale in $\eqref{tau-definition}$ is used to study certain types of self similar solutions to evolution equations (see $\cite{EgFo}$), but in our case is for convenience.
\end{remark}

We have some definitions to record. Throughout the paper, the identity matrix $\mathbb{R}^{3}\rightarrow\mathbb{R}^{3}$ will be denoted by $\mathbb{I}$. Let
\begin{align}
\ncov[\kappa]&=\nabla\zeta_{\kappa},\label{rescaled-change-of-variables-matrix}\\
\ninv[\kappa]&=\left(\ncov[\kappa]\right)^{-1},\label{rescaled-change-of-variables-inverse}\\
\njac_{\kappa}&=\det{\ncov[\kappa]},\label{rescaled-change-of-variables-jacobian}\\
\frac{3}{\alpha}&=\beta.\label{beta-definition}
\end{align}
Similarly to $(\ref{inverse-jacobian-differentiation-formula})$, we have for $\ninv[\kappa]$ and $\njac_{\kappa}$:
\begin{align}
\dell\ninv[\kappa]^{k}_{i}=-\ninv[\kappa]^{k}_{j}\dell\dell_{s}\zeta_{\kappa}^{j}\ninv[\kappa]^{s}_{i},\label{rescaled-inverse-differentiation-formula}\\
\dell\njac_{\kappa}=\njac_{\kappa}\ninv[\kappa]^{s}_{j}\dell\dell_{s}\zeta_{\kappa}^{j},\label{rescaled-jacobian-differentiation-formula}
\end{align}
for $\dell\in\{\dell_{\tau},\dell_{1},\dell_{2},\dell_{3}\}$.\\

\noindent Using $\eqref{tau-definition}$--$\eqref{eta-ansatz}$ to rewrite $\eqref{new-lagrangian-euler-1}$, we obtain
\begin{align}
e^{-\tau}\tilde{w}_{\kappa}^{\alpha}\left(\dell_{\tau\tau}\theta_{\kappa}^{i}+\dell_{\tau}\theta_{\kappa}^{i}\right)+e^{-\left(1+\frac{3}{\alpha}\right)\tau}\dell_{k}\left(\tilde{w}_{\kappa}^{1+\alpha}\ninv[\kappa]^{k}_{i}\njac_{\kappa}^{-1/\alpha}\right)+e^{-\tau}\tilde{w}_{\kappa}^{\alpha}\ninv[\kappa]^{k}_{i}\dell_{k}\psi_{\kappa}=0,\label{euler-nonlinear-scaling}
\end{align}
and upon multiplying everything on the left hand side by $e^{\left(1+\frac{3}{\alpha}\right)\tau}$, the system $\eqref{new-lagrangian-euler-1}$--$\eqref{new-lagrangian-euler-3}$ becomes:
\begin{align}
e^{\beta\tau}\tilde{w}_{\kappa}^{\alpha}\left(\dell_{\tau\tau}\theta_{\kappa}^{i}+\dell_{\tau}\theta_{\kappa}^{i}\right)+\dell_{k}\left(\tilde{w}_{\kappa}^{1+\alpha}\ninv[\kappa]^{k}_{i}\njac_{\kappa}^{-1/\alpha}\right)+e^{\beta\tau}\tilde{w}_{\kappa}^{\alpha}\ninv[\kappa]^{k}_{i}\dell_{k}\psi_{\kappa}=0\ \ \ \ \ \ \ \ \ \ \ &\text{in}\  I\times\Omega,\label{rescaled-lagrangian-euler-1}\\
(\dell_{\tau}\theta_{\kappa}(\tau,x),\theta_{\kappa}(\tau,x))=(\mathring{u}_{\kappa}(\mathring{\eta}_{\kappa}(x))-(\mathring{\eta}_{\kappa}(x)+\mu_{\kappa}(x)),0)\ \ \ \ &\text{in}\ \{\tau=0\}\times\Omega,\label{rescaled-lagrangian-euler-2}\\
\tilde{w}_{\kappa}=0\ \ \ \ \ \ \ \ \ \ \ &\text{on}\ \dell\Omega,\label{rescaled-lagrangian-euler-3}
\end{align}

\begin{remark}\label{scaling-leads-to-stabilising-mechanism} The imbalance in exponential powers between the velocity term and the pressure term in $\eqref{euler-nonlinear-scaling}$ leads to a positive exponential in front of the velocity term in $\eqref{rescaled-lagrangian-euler-1}$. This is exactly the nonlinear scaling mechanism, exhibited in $\cite{PaHaJa}$, that will stabilise our solution and help us to prove that it is global-in-time. 
\end{remark}
For the gravitation potential term, note that in Eulerian coordinates, we have the Poisson equation $\eqref{euler-4}$ given by $\Delta\phi=4\pi\rho$, where $\rho$ is the cumulative density $\sum_{\kappa}\rho_{\kappa}$. Due to the compact support of each $\rho_{\kappa}$, we can use the convolution formula for Poisson's equation and write $\phi$ explicitly as
\begin{align}
\phi(t,x)=-\sum_{\kappa=1}^{N}\int_{\Omega(t)}\frac{\rho_{\kappa}(t,z)}{\left|x-z\right|} dx.\label{phi-eulerian-soluton}
\end{align} 
If we then apply the flow map $\eta_{\kappa}$, for a fixed $\kappa$, to both sides of $\eqref{phi-eulerian-soluton}$, we have, for $i=1,2,3$,
\begin{align}
\ninv[\kappa]^{k}_{i}\dell_{k}\psi_{\kappa}&=-\ninv[\kappa]^{k}_{i}\dell_{k}\left(\int_{\Omega}\frac{\tilde{w}_{\kappa}^{\alpha}(z)}{\left|\eta_{\kappa}(t,x)-\eta_{\kappa}(t,z)\right|}dz\right)-\sum_{\kappa'\neq\kappa}\ninv[\kappa]^{k}_{i}\dell_{k}\left(\int_{\Omega}\frac{\tilde{w}^{\alpha}_{\kappa'}(z)}{\left|\eta_{\kappa}(t,x)-\eta_{\kappa'}(t,z)\right|}dz\right)\nonumber\\
&=-e^{-\tau}\ninv[\kappa]^{k}_{i}\dell_{k}\left(\int_{\Omega}\frac{\tilde{w}_{\kappa}^{\alpha}(z)}{\left|\zeta_{\kappa}(\tau,x)-\zeta_{\kappa}(\tau,z)\right|}dz\right)-e^{-\tau}\sum_{\kappa'\neq\kappa}\ninv[\kappa]^{k}_{i}\dell_{k}\left(\int_{\Omega}\frac{\tilde{w}^{\alpha}_{\kappa'}(z)}{\left|\zeta_{\kappa}(\tau,x)-\zeta_{\kappa'}(\tau,z)\right|}dz\right)\nonumber\\
&=-\mathscr{G}_{\kappa}^{i}-\sum_{\kappa'\neq\kappa}\mathscr{I}_{[\kappa,\kappa']}^{i}.\label{lagrangian-gradient-of-lagrangian-potential}
\end{align}
\noindent We can further rewrite $\mathscr{G}_{\kappa}$ and $\mathscr{I}_{[\kappa,\kappa']}$. For $\mathscr{G}_{\kappa}$, note that
\begin{align}
\mathscr{G}_{\kappa}^{i}&=e^{-\tau}\ninv[\kappa](\tau,x)^{k}_{i}\dell_{x_{k}}\left(\int_{\Omega}\frac{\tilde{w}_{\kappa}^{\alpha}(z)}{\left|\zeta_{\kappa}(\tau,x)-\zeta_{\kappa}(\tau,z)\right|}dz\right)=-e^{-\tau}\int_{\Omega}\ninv[\kappa](\tau,z)^{k}_{i}\dell_{z_{k}}\left(\frac{1}{\left|\zeta_{\kappa}(\tau,x)-\zeta_{\kappa}(\tau,z)\right|}\right)\tilde{w}_{\kappa}^{\alpha}(z)\ dz\nonumber\\
&=e^{-\tau}\int_{\Omega}\frac{\dell_{k}\left(\ninv^{k}_{i}[\kappa]\tilde{w}_{\kappa}^{\alpha}\right)}{\left|\zeta_{\kappa}(\tau,x)-\zeta_{\kappa}(\tau,z)\right|}dz,\label{pre-self-interaction-rescaled-form}
\end{align}
where the last step uses integration by parts, noting that $\tilde{w}_{\kappa}$ is $0$ on $\dell\Omega$. For $\mathscr{I}_{[\kappa,\kappa']}$, we have
\begin{align}
\mathscr{I}_{[\kappa,\kappa']}&=e^{-\tau}\ninv[\kappa]^{k}_{i}(\tau,x)\dell_{x_{k}}\left(\int_{\Omega}\frac{\tilde{w}^{\alpha}_{\kappa'}(z)}{\left|\zeta_{\kappa}(\tau,x)-\zeta_{\kappa'}(\tau,z)\right|}dz\right)\nonumber\\
&=-e^{-\tau}(\tau,x)\int_{\Omega}\frac{\ninv[\kappa]^{k}_{i}\dell_{k}\zeta_{\kappa}^{j}(\tau,x)\left(\zeta_{\kappa}^{j}(\tau,x)-\zeta_{\kappa'}^{j}(\tau,z)\right)}{\left|\zeta_{\kappa}(\tau,x)-\zeta_{\kappa'}(\tau,z)\right|^{3}}\tilde{w}_{\kappa}(z)\ dz\nonumber\\
&=e^{-\tau}\int_{\Omega}\frac{\left(\zeta_{\kappa}^{i}(\tau,x)-\zeta_{\kappa'}^{i}(\tau,z)\right)\tilde{w}_{\kappa'}^{\alpha}}{\left|\zeta_{\kappa}(\tau,x)-\zeta_{\kappa'}(\tau,z)\right|^{3}}dz,\label{pre-tidal-terms-rescaled-form}
\end{align}
where the last line is because $\ninv[\kappa]=\left[\nabla\zeta_{\kappa}\right]^{-1}$.

The $\mathscr{G}_{\kappa}$ are the \emph{self-interaction terms} and represent the part of the potential that encodes how the star is affected by its own gravity. The $\mathscr{I}_{[\kappa,\kappa']}$ are the \emph{tidal terms} which encode how different stars affect each other via their gravitational interaction. Note that in the case of $N=1$ only the self-interaction terms are present, and these terms have already been studied in our context in \cite{HaJa4}. The tidal terms are unique to the $N\geq2$ case, and understanding how to control these terms indirectly gives us information on how to configure our system initially, see Remark $\ref{strong-separation-condition-remark}$.

\subsection{Initial Density Profiles}

In this section we will fix our initial density profiles for each $\kappa$. First we define a collection of admissible profiles.
\begin{definition}\label{definition-of-W}
	For each $\kappa\in\{1,2,\dots,N\}$, let $\mathcal{W}_{\kappa}$ be the set of functions $F:\Omega_{\kappa}\rightarrow\mathbb{R}$ with the following properties:
	\begin{itemize}
		\item Letting $\emph{int}(\Omega_{\kappa})\coloneqq\{x\in\Omega_{\kappa}|\left|x-\bar{x}_{\kappa}\right|<1\}$, we have
		\begin{align}
		F\rvert_{\emph{int}(\Omega_{\kappa})}>0,\ \ \ F\rvert_{\dell\Omega_{\kappa}}\equiv0.\label{W-0-on-boundary}
		\end{align} 
		\item
		There exists a positive constant $C>0$ such that  for any $x\in\Omega_{\kappa}$
		\begin{align}
		\frac1 Cd(x,\dell\Omega_{\kappa})\leq F(x)\leq C d(x,\dell\Omega_{\kappa}).\label{W-and-d-equivalence}
		\end{align}
		where $x\mapsto d(x,\partial\Omega_{\kappa})$ is the distance function to $\partial\Omega_{\kappa}$.
		\item The function given by
		\begin{align}
		x\mapsto\frac{F(x)}{d(x,\dell\Omega_{\kappa})}\label{ratio-of-W-d-smooth}
		\end{align}
		is smooth on $\Omega_{\kappa}$.
	\end{itemize}
\end{definition}
Now we fix an initial density profile for each $\kappa=1,2,\dots,N$ by choosing a function from each $\mathcal{W}_{\kappa}$, which we will call $W_{\kappa}$, and a constant $\delta>0$ such that
\begin{align}
\delta W_{\kappa}(x)=w_{\kappa}(x)=\mathring{\rho}_{\kappa}^{\gamma-1}(x).\label{definition-of-delta}
\end{align}
Note that $\delta$ is significant as we can adjust the size of our initial densities by adjusting the value of $\delta$. Throughout, $\delta\ll1$, and we will specify explicit bounds on $\delta$ wherever necessary. This ansatz has already been used in \cite{HaJa4,PaHaJa} to find global-in-time solutions with small density. Under this definition of $\mathring{\rho}_{\kappa}$, the speed of sound given by $\mathring{c}_{\kappa}^{2}=\gamma\delta W_{\kappa}$ will satisfy the physical vacuum condition $\eqref{vacuum-boundary-condition}$.

Finally, since we work from a reference domain $\Omega$, we define for each $\kappa$:
\begin{align}
\tilde{W}_{\kappa}(x)=W_{\kappa}(x+\bar{x}_{\kappa}),\ \ \ x\in\Omega.\label{definition-of-translated-W}
\end{align}
Due to $\eqref{W-and-d-equivalence}$, we have the relation
\begin{align}
\frac1 Cd(x,\dell\Omega)\leq \tilde{W}_{\kappa}(x)\leq C d(x,\dell\Omega),\label{translated-W-and-d-equivalence}
\end{align}
as $d(x+\bar{x}_{\kappa},\dell\Omega_{\kappa})=d(x,\dell\Omega)$ for all $x\in\Omega$.

Clearly $\delta\tilde{W}_{\kappa}=\tilde{w}_{\kappa}$, where $\tilde{w}_{\kappa}$ is defined in $(\ref{definition-of-translated-w})$. We subsitute for $\tilde{W}_{\kappa}$, and our system becomes, for $\kappa=1,2,\dots,N$, and $i=1,2,3$:
\begin{align}
\frac{1}{\delta}e^{\beta\tau}\tilde{W}_{\kappa}^{\alpha}\left(\dell_{\tau\tau}\theta_{\kappa}^{i}+\dell_{\tau}\theta_{\kappa}^{i}\right)+\dell_{k}\left(\tilde{W}_{\kappa}^{1+\alpha}\ninv[\kappa]^{k}_{i}\njac_{\kappa}^{-1/\alpha}\right)+\frac{1}{\delta}e^{\beta\tau}\tilde{W}_{\kappa}^{\alpha}\ninv[\kappa]^{k}_{i}\dell_{k}\psi_{\kappa}=0\ \ \ \ \ \ \ \ \ \ \ &\text{in}\  I\times\Omega,\label{delta-rescaled-lagrangian-euler-1}\\
(\dell_{\tau}\theta_{\kappa}(\tau,x),\theta_{\kappa}(\tau,x))=(\mathring{u}_{\kappa}(\mathring{\eta}_{\kappa}(x))-(\mathring{\eta}_{\kappa}(x)+\mu_{\kappa}(x)),0)\ \ \ \ &\text{in}\ \{\tau=0\}\times\Omega,\label{delta-rescaled-lagrangian-euler-2}\\
\tilde{W}_{\kappa}=0\ \ \ \ \ \ \ \ \ \ \ &\text{on}\ \dell\Omega.\label{delta-rescaled-lagrangian-euler-3}
\end{align}
Using $\eqref{definition-of-delta}$ to replace $\tilde{w}_{\kappa}$ with $\tilde{W}_{\kappa}$ in $\eqref{pre-self-interaction-rescaled-form}$ and $\eqref{pre-tidal-terms-rescaled-form}$, we obtain
\begin{align}
&\mathscr{G}_{\kappa}^{i}=\delta^{\alpha}e^{-\tau}\int_{\Omega}\frac{\dell_{k}\left(\ninv^{k}_{i}[\kappa]\tilde{W}^{\alpha}\right)}{\left|\zeta_{\kappa}(\tau,x)-\zeta_{\kappa}(\tau,z)\right|}dz\label{self-interaction-rescaled-form}\\
&\mathscr{I}_{[\kappa,\kappa']}^{i}=\delta^{\alpha}e^{-\tau}\int_{\Omega}\frac{\left(\zeta_{\kappa}^{i}(\tau,x)-\zeta_{\kappa'}^{i}(\tau,z)\right)\tilde{W}_{\kappa'}^{\alpha}}{\left|\zeta_{\kappa}(\tau,x)-\zeta_{\kappa'}(\tau,z)\right|^{3}}dz.\label{tidal-terms-rescaled-form}
\end{align}

\section{Notation}\label{notation-section}

\subsection{General Notation}
For a function $F:\mathcal O\rightarrow\mathbb{R}$, some domain $\mathcal O$, the support of $F$ is denoted $\supp{F}$. For a real number $\lambda$, the ceiling function, denoted $\ceil*{\lambda}$, is the smallest integer $M$ such that $\lambda\leq M$. For two real numbers $A$ and $B$, we say $A\lesssim B$ if there exists a positive constant $C$ such that
\begin{align}
A\leq CB,\label{lesssim-def}
\end{align}
and for two real valued functions $f$ and $g$, we say $f\lesssim g$ if $f(x)\lesssim g(x)$ holds pointwise. For two real-valued non-negative functions $f,g: \mathcal O\rightarrow \mathbb{R}_{\geq0}$, some domain $\mathcal O$, we say $f\sim g$ if there exist positive constants $c_{1}$ and $c_{2}$ such that
\begin{align}
c_{1}g(x)\leq f(x)\leq c_{1}g(x),\label{two-equivalent-functions-def}
\end{align}
for all $x\in \mathcal O$.

%For a collection of rank $2$ tensors $M[i]$,  %$M[1],\dots,M[j]$, $i=1,\dots,j$ by 
%\begin{align}
%M[1]\dots M[j],\label{schematically}
%\end{align}
%we mean a particular element of the rank $2^{j}$ tensor $M[1]\otimes\dots\otimes M[j]$, that is, something of the form
%\begin{align}
%M[1]^{k_{1}}_{i_{1}}\dots M[j]^{k_{j}}_{i_{j}},\label{schematically-2}
%\end{align}
%for some $k_{1},i_{1},\dots,k_{j},i_{j}\in\{1,2,3\}$. We say this is a schematic representation of the object. Note that this notation is ambiguous and will only be used when we are looking to bound such a quantity using the properties of $M[1],\dots,M[j]$ themselves, rather than any of their specific elements.

We also record the definition of the radial function on $\Omega=B_{1}$ the unit ball:
\begin{align}
r:B_{1}&\rightarrow\mathbb{R}_{\ge0}\nonumber\\
x&\mapsto r(x)\coloneqq |x|.\label{radial-function}
\end{align}
It is convenient to define shorthand for the distance function on $\Omega$. Define
\begin{align}
d_{\Omega}(x)=d(x,\dell\Omega).\label{distance-function-shorthand}
\end{align}

\subsection{Derivatives}\label{derivatives}

As we have seen above, rectangular derivatives will be denoted as $\dell_{i}$, for $i$ in $1,2,3$. In addition, we define various rectangular and $\zeta_{\kappa}$ Lie derivatives that will be used throughout. The gradient, divergence, and curl on vector fields are given by
\begin{align}
&[\nabla F]^{i}_{j}=\dell_{j}F^{i},\label{gradient}\\
&\dive{F}=\dell_{i}F^{i},\label{divergence}\\
&[\curl{F}]^{i}=\varepsilon_{ijk}\dell_{j}F^{k},\label{curl}
\end{align}
for $i,j=1,2,3.$

The $\zeta_{\kappa}$ versions are given by
\begin{align}
&[\ngrad{F}]^{i}_{j}=\ninv[\kappa]^{k}_{j}\dell_{k}F^{i},\label{zeta-gradient}\\
&\ndiv{F}=\ninv[\kappa]^{k}_{i}\dell_{k}F^{i},\label{zeta-divergence}\\
&[\ncurl{F}]^{i}_{j}=\varepsilon_{ijk}\ninv[\kappa]^{s}_{j}\dell_{s}F^{k}.\label{zeta-curl}
\end{align}
In addition, we also need the matrix $\zeta_{\kappa}$ curl, given by
\begin{align}
[\nCurl{F}]^{i}_{j}=\ninv[\kappa]^{s}_{j}\dell_{s}F^{i}-\ninv[\kappa]^{s}_{i}\dell_{s}F^{j}.\label{zeta-Curl}
\end{align}
Recall $(\ref{material-manifold})$; our reference domain is the closed unit ball in $\mathbb{R}^{3}$, $\Omega=B_{1}$. There exists a natural choice of spherical coordinates $(r,\omega,\phi)$. An advantage of this choice of domain is that we can privilege the outward normal derivative, the direction in which the degeneracy of the problem occurs, due to the vacuum boundary condition $\eqref{vacuum-boundary-condition}$.

Accordingly we will, in essence, use $\dell_{r}$ as the normal derivative, and $\dell_{\omega},\dell_{\phi}$ as the tangential derivatives. However we modify these derivatives by using linear combinations. These modifications allow for better commutation relations with the rectangular derivatives. Let the angular derivatives $\angdell_{ij}$ and radial derivative $\rdell$ be given by
\begin{align}
\angdell_{ij} &:=x_{i}\dell_{j}-x_{j}\dell_{i},\label{angular-derivative-def}\\
\rdell &: = x_i\partial_i =r\dell_{r},\label{radial-derivative-def}
\end{align}
where the $x_{i}$ and $\dell_{j}$ are rectangular, and $i,j$ run through $1,2,3$. On regions separated from the origin, we will use the following decomposition frequently:
\begin{align}
\dell_{i}=\frac{x_{j}}{r^{2}}\angdell_{ji}+\frac{x_{i}}{r^{2}}\rdell.\label{rectangular-as-ang-rad}
\end{align}
\begin{remark}\label{degeneracy-modified-spherical-derivatives}
	The coefficients of the derivatives we have defined go to 0 at the origin, which means we can only use them for estimates on a region separated from the origin. This can be dealt with using a partition of unity argument. Near the boundary we use these modified spherical derivatives, and on the interior, we are free to use rectangular derivatives as the degeneracy at the vacuum boundary is not an issue in this case.
\end{remark}
Now, for $m\in\mathbb{Z}_{\geq0}$, and $\underline{n}=(n_{1},n_{2},n_{3})\in\mathbb{Z}^{3}_{\geq0}$, we define
\begin{align}
\Ndell{m}{n}\coloneqq \rdell^{m}\angdell_{12}^{n_{1}}\angdell_{13}^{n_{2}}\angdell_{23}^{n_{3}}.\label{Ndell-def}
\end{align}
Although there are six non-zero $\angdell$ derivatives to consider, $\angdell_{ij}=-\angdell_{ji}$, so $(\ref{Ndell-def})$ covers all cases. For such an $\underline{n}\in\mathbb{Z}^{3}_{\geq0}$, $|\underline{n}|=n_{1}+n_{2}+n_{3}$. Similarly for rectangular derivatives we define, for $\underline{k}=(k_{1},k_{2},k_{3})\in\mathbb{Z}_{\geq0}^{3}$,
\begin{align}
\nabla^{\underline{k}}=\dell_{1}^{k_{1}}\dell_{2}^{k_{2}}\dell_{3}^{k_{3}}.\label{Dk-def}
\end{align}
We have the commutation relations between the modified spherical  and rectangular derivatives, for $i,j,k,m\in\{1,2,3\}$, given by
\begin{align}
\comm{\angdell_{ij}}{\rdell}&=0,\label{commutator-ang-rad}\\
\comm{\angdell_{ij}}{\angdell_{jk}}&=\angdell_{ik},\label{commutator-ang-ang}\\
\comm{\dell_{m}}{\rdell}&=\dell_{m},\label{commutator-rectangular-rad}\\
\comm{\dell_{m}}{\angdell_{ji}}&=\delta_{mj}\dell_{i}-\delta_{mi}\dell_{j}.\label{commutator-rectangular-ang}
\end{align}
We also define commutators between the higher order differential operator defined in $(\ref{Ndell-def})$, and $\grad$:
\begin{align}
\left(\comm{\grad}{\Ndell{m}{n}}F\right)^{i}_{j}&=\dell_{j}\left(\Ndell{m}{n}F^{i}\right)-\Ndell{m}{n}\left(\dell_{j}F^{i}\right).\label{commutator-grad-Ndell}
\end{align}
We can do the same thing with $\ngrad$:
\begin{align}
\left(\comm{\ngrad}{\Ndell{m}{n}}F\right)^{i}_{j}&=\ninv[\kappa]^{k}_{j}\dell_{k}\left(\Ndell{m}{n}F^{i}\right)-\Ndell{m}{n}\left(\ninv[\kappa]^{k}_{j}\dell_{k}F^{i}\right),\label{commutator-ngrad-Ndell}\\
\left(\comm{\ngrad}{\nabla^{\underline{k}}}F\right)^{i}_{j}&=\ninv[\kappa]^{k}_{j}\dell_{k}\left(\nabla^{\underline{k}}F^{i}\right)-\nabla^{\underline{k}}\left(\ninv[\kappa]^{k}_{j}\dell_{k}F^{i}\right).\label{commutator-ngrad-Dk}
\end{align}
There is no corresponding definition to $(\ref{commutator-ngrad-Dk})$ for $\grad$, as $\grad$ and $\nabla^{\underline{k}}$ commute for all $\underline{k}\in\mathbb{Z}_{\geq0}^{3}$. Note that $(\ref{commutator-ngrad-Ndell})$ and $(\ref{commutator-ngrad-Dk})$ also define analogous objects for $\nCurl$ and $\ndiv$ as the former is $\ngrad-\ngrad^{\intercal}$, and the latter is $\tr{\ngrad}$.

\section{Energy Function}\label{energy-function-section}
Following the strategy set out in Remark $\ref{degeneracy-modified-spherical-derivatives}$, define a smooth radial cutoff function $\chi$ on the closed unit ball such that
\begin{align}
\chi=\twopartdef{1}{r\in[3/4,1]}{0}{r\in[0,1/4]}.\label{cutoff-function}
\end{align}
In addition define $\bar{\chi}$ by
\begin{align}
\bar{\chi}=1-\chi.\label{cutoff-function-conjugate}
\end{align}
Recalling the definition of $\alpha$ given in $(\ref{alpha-definition})$, we now define our energy spaces.
\begin{definition}[Energy Spaces]{\label{energy-function-spaces-definition}}
	Let $b\in\mathbb{Z}_{\geq0}$ and define the space $\X{b}_{\kappa}$, $\kappa=1,2,\dots,N$, by
	\begin{align*}
	\X{b}_{\kappa}=\left\{\tilde{W}_{\kappa}^{\frac{\alpha}{2}}F\in L^{2}(\Omega): \sum_{m+|\underline{n}|,|\underline{k}|=0}^{b}\int_{\Omega} \tilde{W}_{\kappa}^{\alpha+m}\left(\chi\left|\Ndell{m}{n}F\right|^{2}+\bar{\chi}\left|\nabla^{\underline{k}}F\right|^{2}\right)dx<\infty.\right\}
	%&\mathcal{Y}^{b}=\left\{W^{\frac{1+\alpha}{2}}\ngrad{F}\in L^{2}(\Omega): \spaceI\Gamma W^{1+\alpha+m}\pjac\left|\ngrad{\Ndell{m}{n}F}\right|^{2}dx<\infty, 0\leq m+|\underline{n}|\leq b\right\},\\
	%&\mathcal{V}^{b}=\left\{W^{\frac{1+\alpha}{2}}\ndiv{F}\in L^{2}(\Omega): \spaceI\Gamma W^{1+\alpha+m}\pjac\left|\ndiv{\Ndell{m}{n}F}\right|^{2}dx<\infty, 0\leq m+|\underline{n}|\leq b\right\},\\
	%&\mathcal{Z}^{b}=\left\{W^{\frac{1+\alpha}{2}}\nCurl{F}\in L^{2}(\Omega): \spaceI\Gamma W^{1+\alpha+m}\pjac\left|\nCurl{\Ndell{m}{n}F}\right|^{2}dx<\infty, 0\leq m+|\underline{n}|\leq b\right\}.
	\end{align*}
	The norm of $\X{b}_{\kappa}$ is given by
	\begin{align}
	\left\|F\right\|^{2}_{\X{b}_{\kappa}}=\sum_{m+|\underline{n}|=0}^{b}\int_{\Omega}\chi \tilde{W}_{\kappa}^{\alpha+m}\left|\Ndell{m}{n}F\right|^{2}dx+\sum_{|\underline{k}|=0}^{b}\int_{\Omega}\bar{\chi} \tilde{W}_{\kappa}^{\alpha}\left|\nabla^{\underline{k}}F\right|^{2}dx.\label{energy-space-norm}
	\end{align}
	For $\mathscr{D}\in\{\grad,\ngrad,\dive,\ndiv,\Curl,\nCurl\}$ define the space $\Y{b}{\kappa}{\mathscr{D}}$ by
	\begin{align*}
	\Y{b}{\kappa}{\mathscr{D}}=\left\{\tilde{W}_{\kappa}^{\frac{1+\alpha}{2}}\mathscr{D}{F}\in L^{2}(\Omega): \sum_{m+|\underline{n}|,|\underline{k}|=0}^{b}\int_{\Omega} \tilde{W}_{\kappa}^{1+\alpha+m}\njac_{\kappa}^{-\frac{1}{\alpha}}\left(\chi\left|\mathscr{D}{\Ndell{m}{n}F}\right|^{2}+\bar{\chi}\left|\mathscr{D}{\nabla^{\underline{k}}F}\right|^{2}\right)dx<\infty.\right\},
	\end{align*}
	with semi norm given by
	\begin{align}
	\left\|F\right\|^{2}_{\Y{b}{\kappa}{\mathscr{D}}}=\sum_{m+|\underline{n}|=0}^{b}\int_{\Omega}\chi \tilde{W}_{\kappa}^{1+\alpha+m}\njac_{\kappa}^{-\frac{1}{\alpha}}\left|\mathscr{D}{\Ndell{m}{n}F}\right|^{2}dx+\sum_{|\underline{k}|=0}^{b}\int_{\Omega}\bar{\chi} \tilde{W}_{\kappa}^{1+\alpha}\njac_{\kappa}^{-\frac{1}{\alpha}}\left|\mathscr{D}{\nabla^{\underline{k}}F}\right|^{2}dx.\label{energy-space-norm-2}
	\end{align}
\end{definition}
\begin{remark}\label{power-of-W-on-interior-norm}The powers of $\tilde{W}_{\kappa}$ in the integrals involving $\bar{\chi}$ in $(\ref{energy-space-norm})-(\ref{energy-space-norm-2})$ are not consistent with what we see in the definition of the energy spaces. However, on $\supp{\bar{\chi}}$, $\tilde{W}_{\kappa}\sim1$ so this discrepancy does not create issues.
\end{remark}
We now define our higher order energy function and $\nCurl$ energy function. 
\begin{definition}[Higher Order Energy and Curl Functions]{\label{energy-function}}Let $b\in\mathbb{Z}_{\geq0}$. Then the individual energy functions of order $b$ for $(\dell_{\tau}\theta_{\kappa},\theta_{\kappa})$, $\kappa=1,2,\dots,N$, is given by
	\begin{align}
	S_{\kappa}^{b}(\tau)=S^{b}(\dell_{\tau}\theta_{\kappa},\theta_{\kappa},\tau)\coloneqq\sup_{0\leq \tau'\leq \tau}\left(\frac{1}{\delta}e^{\beta\tau'}\left\|\dell_{\tau}\theta_{\kappa}(\tau')\right\|_{\X{b}_{\kappa}}^{2}+\left\|\theta_{\kappa}(\tau')\right\|_{\X{b}_{\kappa}}^{2}+\left\|\theta_{\kappa}(\tau')\right\|_{\Y{b}{\kappa}{\ngrad}}^{2}+\frac{1}{\alpha}\left\|\theta_{\kappa}(\tau')\right\|_{\Y{b}{\kappa}{\ndiv}}^{2}\right).\label{individual-energy-function}
	\end{align}
	The $\nCurl$ energy function of order $b$ for $(\dell_{\tau}\theta_{\kappa},\theta_{\kappa})$ is given by
	\begin{align}
	C^{b}_{\kappa}(t)=C^{b}(\dell_{\tau}\theta_{\kappa},\theta_{\kappa},\tau)\coloneqq\sup_{0\leq \tau'\leq \tau}\left(\left\|\dell_{\tau}\theta_{\kappa}(\tau')\right\|_{\Y{b}{\kappa}{\nCurl}}^{2}+\left\|\theta_{\kappa}(\tau')\right\|_{\Y{b}{\kappa}{\nCurl}}^{2}\right).\label{individual-curl-energy-function}
	\end{align}
	Then the cumulative energy function and $\nCurl$ energy function of order $b$ are
	\begin{align}
	S_{b}(\tau)&=\sum_{\kappa=1}^{N}S^{b}_{\kappa}(\tau),\label{cumulative-energy-function}\\
	C_{b}(\tau)&=\sum_{\kappa=1}^{N}C^{b}_{\kappa}(\tau).\label{cumulative-curl-energy-function}
	\end{align}
\end{definition}

\section{Main Result and A Priori Assumptions}\label{main-result-a-priori-assumptions-section}

\subsection{Main Result}\label{main-result-section}

In this section we will state the central theorem of this paper, global-in-time existence of the system $\eqref{delta-rescaled-lagrangian-euler-1}$--$\eqref{delta-rescaled-lagrangian-euler-3}$. First we state the local-in-time theory for this system. Details on its construction are given in Appendix \ref{local-well-posedness}.

Recall the definition of $\delta$, the smallness parameter for the initial density profiles defined in $\eqref{definition-of-delta}$, and the repulsive velocities of each star $\mu_{\kappa}$, defined in $\eqref{eta-pre-ansatz}$.
\begin{theorem}[Local Well-Posedness of the Free Boundary $N$-Body Euler-Poisson System]\label{local-well-posedness-theorem}Assume $\gamma=1+\frac{1}{n}$ for some $n\in\mathbb{Z}_{\geq2}$, or $\gamma\in(1,14/13)$. Let $M$ be an integer such that $M\geq2\ceil*{\alpha}+12$. Additionally, assume that $\nabla\tilde{W}_{\kappa},\nabla\mu_{\kappa}\in\X{M}_{\kappa}$ for $\kappa=1,2,\dots,N$. Let $\mathring{u}_{\kappa}:\Omega_{\kappa}\rightarrow\mathbb{R}^{3}$ and $(\mathring{\eta}_{\kappa}(x)=x+\bar{x}_{\kappa}):\Omega\rightarrow\Omega_{\kappa}$ be such that the \emph{separation condition}
	\begin{align}
	&\min_{\substack{\kappa,\kappa'=1,2,\dots,N \\ \kappa\neq\kappa'}}\dist(\Omega_{\kappa},\Omega_{\kappa'})=\dist(\Omega+\bar{x}_{\kappa},\Omega+\bar{x}_{\kappa'})>0\label{separation-condition-local-theory}
	\end{align}
	holds, and the bound
	\begin{align}
	S_{M}(0)+C_{M}(0)=\sum_{\kappa=1}^{N}\left[S_{\kappa}^{M}(\mathring{u}_{\kappa}(\mathring{\eta}_{\kappa}(x))-(\mathring{\eta}_{\kappa}(x)+\mu_{\kappa}(x)),0,0)+C_{\kappa}^{M}(\mathring{u}_{\kappa}(\mathring{\eta}_{\kappa}(x))-(\mathring{\eta}_{\kappa}(x)+\mu_{\kappa}(x)),0,0)\right]<\infty\label{initial-data-inequality-local-theorem}
	\end{align}
	holds. Then there exists a $T>0$ such that we can find a unique solution $\{(\dell_{\tau}\theta_{\kappa},\theta_{\kappa})|\kappa=1,2,\dots,N\}$ to $\eqref{delta-rescaled-lagrangian-euler-1}$--$\eqref{delta-rescaled-lagrangian-euler-3}$ on the interval $[0,T]$ with
	\begin{align}
	&S_{M}(\tau) + C_{M}(\tau)
	\leq2(S_{M}(0)+C_{M}(0)+\sqrt{\delta}),\ \ \ \forall\tau\in[0,T],\label{local-in-time-energy-bound}\\
	&(\dell_{\tau}\theta_{\kappa}(0),\theta_{\kappa}(0))=(\mathring{u}_{\kappa}(\mathring{\eta}_{\kappa}(x))-(\mathring{\eta}_{\kappa}(x)+\mu_{\kappa}(x)),0)\ \ \ \kappa=1,2,\dots,N.\label{local-in-time-initial-data}
	\end{align}
	Moreover, the function $\tau\mapsto S_{M}(\tau)$ is continuous.
\end{theorem}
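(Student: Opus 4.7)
The plan is to treat $\eqref{delta-rescaled-lagrangian-euler-1}$--$\eqref{delta-rescaled-lagrangian-euler-3}$ as a system of $N$ degenerate hyperbolic vacuum free-boundary problems, coupled only through the gravitational term $\ninv[\kappa]^{k}_{i}\dell_{k}\psi_{\kappa}$. For the single-body problem without potential, the degenerate local theory is available from Jang-Masmoudi and Coutand-Shkoller, and with self-potential from Had\v{z}i\'{c}-Jang. The new work is to show that the coupled tidal terms $\mathscr{I}_{[\kappa,\kappa']}$ in $\eqref{tidal-terms-rescaled-form}$ act as smooth lower-order forcing, which is possible thanks to the separation condition $\eqref{separation-condition-local-theory}$, and that the gravitational contribution to the energy is controlled by $\sqrt{\delta}$ due to the prefactor $\delta^{\alpha}$ in $\eqref{self-interaction-rescaled-form}$--$\eqref{tidal-terms-rescaled-form}$.

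\textbf{Iteration scheme.} First I would set up an iteration: at step $n+1$ and for each $\kappa$, solve the linear degenerate hyperbolic problem obtained from $\eqref{delta-rescaled-lagrangian-euler-1}$ by freezing the coefficients $\ninv[\kappa]$, $\njac_{\kappa}^{-1/\alpha}$ at the iterate $\theta_{\kappa}^{(n)}$, and computing the self-interaction and tidal terms from the full previous iterate $\{\theta_{\kappa'}^{(n)}\}_{\kappa'=1}^{N}$. Existence and uniqueness of each linearized problem on a short time interval follows from the standard Jang-Masmoudi framework, possibly after a further regularization that smooths the degenerate weight $\tilde{W}_{\kappa}^{\alpha}$ near $\dell\Omega$ and is removed in the usual manner. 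The fixed point is closed by a uniform-in-$n$ bound on $S_{M}(\tau)+C_{M}(\tau)$ on a common time interval $[0,T]$, followed by a contraction argument at one order lower in derivatives.

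\textbf{Energy and curl estimates.} The a priori bound splits into two pieces. For each admissible multi-index $(m,\underline{n})$ or $\underline{k}$ with $m+|\underline{n}|\le M$, $|\underline{k}|\le M$, I would commute $\Ndell{m}{n}$ (or $\nabla^{\underline{k}}$ in the interior) through $\eqref{delta-rescaled-lagrangian-euler-1}$, test against $\dell_{\tau}\Ndell{m}{n}\theta_{\kappa}$, and integrate by parts. The pressure term produces, modulo lower-order commutators, the symmetric contribution captured by $\|\theta_{\kappa}\|^{2}_{\Y{M}{\kappa}{\ngrad}}+\tfrac{1}{\alpha}\|\theta_{\kappa}\|^{2}_{\Y{M}{\kappa}{\ndiv}}$, exactly as in Jang-Masmoudi; degenerate Hardy inequalities attached to the weight $\tilde{W}_{\kappa}^{\alpha+m}$ absorb the commutator errors. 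The curl estimate is separate: applying $\nCurl$ to $\eqref{delta-rescaled-lagrangian-euler-1}$ kills the pressure gradient, leaving a transport-type equation for $\nCurl\theta_{\kappa}$ whose $\Y{M}{\kappa}{\nCurl}$ norm grows only polynomially in $\tau$. For the potential, $\mathscr{G}_{\kappa}$ carries $\delta^{\alpha}e^{-\tau}$ and is treated as in Had\v{z}i\'{c}-Jang; for $\mathscr{I}_{[\kappa,\kappa']}$, the separation $\eqref{separation-condition-local-theory}$ guarantees that $|\zeta_{\kappa}(\tau,x)-\zeta_{\kappa'}(\tau,z)|$ stays uniformly bounded below on a short time interval, so the kernel in $\eqref{tidal-terms-rescaled-form}$ is $C^{\infty}$ in $(x,z)$ and all its derivatives are controlled by $\|\nabla\zeta_{\kappa}\|_{C^{0}}$-type quantities, which embed from $\X{M}_{\kappa}$ since $M\ge 2\ceil*{\alpha}+12$.

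\textbf{Main obstacle.} The hardest point will be producing the factor $\sqrt{\delta}$ on the right-hand side of $\eqref{local-in-time-energy-bound}$, which must absorb all gravitational contributions uniformly in the iteration. After multiplying the momentum equation by $\frac{1}{\delta}e^{\beta\tau}\tilde{W}_{\kappa}^{\alpha}$, the effective prefactor in front of both $\mathscr{G}_{\kappa}$ and $\mathscr{I}_{[\kappa,\kappa']}$ is $\delta^{\alpha-1}$ times an exponential bounded on $[0,T]$; for $\gamma=1+\frac{1}{n}$, $n\ge 2$, one has $\alpha\ge 2$, while for $\gamma\in(1,14/13)$ one has $\alpha>13$, so $\delta^{\alpha-1}\ll\sqrt{\delta}$ in both ranges. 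The remaining technical burden is bookkeeping: tracking how the high-order differential operators $\Ndell{m}{n}$ and $\nabla^{\underline{k}}$ distribute between $\ninv[\kappa]^{k}_{i}$, $\njac_{\kappa}^{-1/\alpha}$, and the smooth tidal kernel, and matching the $\tilde{W}_{\kappa}$ weights arising from integration by parts to those defining $\X{M}_{\kappa}$ and $\Y{M}{\kappa}{\mathscr{D}}$. Once this is in place, weak compactness extracts a limit solving $\eqref{delta-rescaled-lagrangian-euler-1}$--$\eqref{delta-rescaled-lagrangian-euler-3}$ with the bound $\eqref{local-in-time-energy-bound}$, uniqueness follows by a one-order-lower energy argument, and continuity of $\tau\mapsto S_{M}(\tau)$ is a standard consequence of weak-strong convergence combined with upper-semicontinuity of the norms.
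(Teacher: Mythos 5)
Your proposal is correct and follows essentially the same route as the paper, which (in Appendix \ref{local-well-posedness}) simply adapts the local theory of Jang--Masmoudi and Had\v{z}i\'{c}--Jang and observes that the only genuinely new terms are the tidal terms $\mathscr{I}_{[\kappa,\kappa']}$, controlled as lower-order smooth forcing via the separation condition exactly as in your sketch (cf.\ Proposition \ref{tidal-term-estimates-proposition}). Your accounting of the $\delta^{\alpha-1}$ prefactor on the gravitational terms and of the curl structure matches the paper's treatment.
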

We introduce the \emph{Strong Separation Condition} (SSC), that will be key in formulating our main result.
\begin{definition}[Strongly Separated Initial Configurations of the Free Boundary $N$-Body Euler-Poisson System]\label{strong-separation-condition-definition} We say that an initial configuration for the stars in the Free Boundary $N$-Body Euler-Poisson System satisfies the Strong Separation Condition for $L>0$ if
	\begin{align} 
	&\min_{\substack{\kappa,\kappa'=1,2,\dots,N \\ \kappa\neq\kappa'}}\inf_{\lambda\in[0,1]}\inf_{x,z\in\Omega}\left|(1-\lambda)(\mu_{\kappa}(x)-\mu_{\kappa'}(z))+\lambda(\bar{x}_{\kappa}-\bar{x}_{\kappa'})\right|\geq L\label{a-priori-lower-bound-convex-combination-mu-centres-main-theorem}
	\end{align}
holds.
\end{definition}
\begin{remark}\label{covex-combination} We see that the Strong Separation Condition is a lower bound on any convex combination of the relative positions and velocities.
\end{remark}
\begin{remark}\label{strong-separation-condition-remark}Note that the SSC implies the separation condition $\eqref{separation-condition-local-theory}$ automatically for large enough $L$. Moreover, we remark that $\eqref{a-priori-lower-bound-convex-combination-mu-centres-main-theorem}$ arises naturally from finding sufficient bounds on the tidal terms $\mathscr{I}_{[\kappa,\kappa']}$, defined in $\eqref{lagrangian-gradient-of-lagrangian-potential}$, to prove global-in-time existence of our solutions. Hence studying the tidal terms gives us crucial information on the initial geometry of our star configurations.
\end{remark}
\begin{theorem}[Global Existence for the Free Boundary $N$-Body Euler-Poisson System]\label{main-theorem}
	Assume $\gamma=1+\frac{1}{n}$ for some $n\in\mathbb{Z}_{\geq2}$, or $\gamma\in(1,14/13)$. Let $\{(\dell_{\tau}\theta_{\kappa},\theta_{\kappa})|\kappa=1,2,\dots,N\}$ be the solution to $(\ref{delta-rescaled-lagrangian-euler-1})-(\ref{delta-rescaled-lagrangian-euler-3})$ on $[0,T]$, in the sense of Theorem \ref{local-well-posedness-theorem}, for some $M\geq2\ceil*{\alpha}+12$. Additionally, assume that $\nabla\tilde{W}_{\kappa}\in\X{M}_{\kappa}$ for $\kappa=1,2,\dots,N$. Then there exists $L>0$ sufficiently large, and $\delta,\varepsilon_{0}>0$ sufficiently small such that if the Strong Separation Condition from Definition $\ref{strong-separation-condition-definition}$ holds for $L$, then for all $0\leq\varepsilon\leq\varepsilon_{0}$ with
	\begin{align}
	S_{M}(0)+C_{M}(0)+\left\|\nabla\mu_{\kappa}\right\|_{\X{M}_{\kappa}}^{2}\leq\varepsilon,\label{global-in-time-initial-data-smallness}
	\end{align}
	there exists a global-in-time solution to $\eqref{delta-rescaled-lagrangian-euler-1}$--$\eqref{delta-rescaled-lagrangian-euler-3}$, with the initial conditions as in $\eqref{delta-rescaled-lagrangian-euler-2}$, and there exists a constant $C$ such that
	\begin{align}
	S_{M}(\tau)\leq C\left(\varepsilon+\sqrt{\delta}\right),\ \ \ \forall\tau\in[0,\infty).\label{global-energy-estimate}
	\end{align}
	Additionally, there are $\tau$\bcr-\ec independent functions $\theta_{\kappa}^{(\infty)}:\Omega\rightarrow\mathbb{R}^{3}$ such that
	\begin{align}
	\left\|\theta_{\kappa}(\tau)-\theta_{\kappa}^{(\infty)}\right\|_{\X{M}_{\kappa}}\rightarrow0\ \ \ \tau\rightarrow\infty.\label{theta-infinity-statement}
	\end{align}
\end{theorem}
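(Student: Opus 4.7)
The plan is a bootstrap continuation argument built on top of the local theory in Theorem~\ref{local-well-posedness-theorem}. Fix a large universal constant $K$, and let $T^\ast > 0$ be the supremum of times $\tau \geq 0$ on which the solution exists and satisfies the a priori assumptions
\begin{equation*}
S_M(\tau) + C_M(\tau) \leq K(\varepsilon + \sqrt{\delta}), \qquad \inf_{\kappa \neq \kappa'}\inf_{x,z \in \Omega} |\zeta_\kappa(\tau,x) - \zeta_{\kappa'}(\tau,z)| \geq L/2.
\end{equation*}
Continuity of $\tau \mapsto S_M(\tau) + C_M(\tau)$ and of $\zeta_\kappa$ guarantees $T^\ast > 0$. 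The goal is to improve these assumptions to $S_M + C_M \leq (K/2)(\varepsilon + \sqrt{\delta})$ and pairwise distance $\geq 3L/4$; by continuity, this forces $T^\ast = \infty$.

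The energy improvement uses the results of Sections~\ref{gravitational-potential-estimates-section}--\ref{energy-estimates-section}. Applying the high-order operator $\Ndell{m}{n}$ (respectively $\nabla^{\underline{k}}$ against the interior cutoff $\bar\chi$) to \eqref{delta-rescaled-lagrangian-euler-1}, pairing with $\dell_\tau$ of the corresponding quantity against the weight $\tilde{W}_\kappa^\alpha$, and summing over $\kappa$ and $m + |\underline{n}| \leq M$, the pressure term integrates by parts into the positive semi-norms $\|\theta_\kappa\|_{\Y{M}{\kappa}{\ngrad}}^2 + \alpha^{-1} \|\theta_\kappa\|_{\Y{M}{\kappa}{\ndiv}}^2$, while the velocity term generates $\frac{1}{\delta} e^{\beta\tau}\|\dell_\tau \theta_\kappa\|_{\X{M}_\kappa}^2$ together with a positive damping contribution of coefficient $\beta$, the stabilising mechanism of Remark~\ref{scaling-leads-to-stabilising-mechanism} inherited from \cite{PaHaJa}. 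The $\njac_\kappa^{-1/\alpha}$ and $\ninv[\kappa]$ nonlinearities are expanded around the identity and absorbed using bootstrap smallness of $\nabla\theta_\kappa$. The self-interaction $\mathscr{G}_\kappa$ contributes an error of order $\delta^\alpha$ via \eqref{self-interaction-rescaled-form}, following \cite{HaJa4}. Because $\nCurl\theta_\kappa$ is not controlled by the symmetric part of $\ngrad\theta_\kappa$, one separately takes $\nCurl$ of the momentum equation, killing the pressure, and derives a direct bound on $C_M$ as in Section~\ref{curl-estimates-section}.

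The principal obstacle is the tidal terms $\mathscr{I}_{[\kappa,\kappa']}$ in \eqref{tidal-terms-rescaled-form}, whose kernels are singular at coincidence points. By the ansatz \eqref{eta-ansatz},
\begin{equation*}
\zeta_\kappa(\tau,x) - \zeta_{\kappa'}(\tau,z) = (x - z) + e^{-\tau}(\bar{x}_\kappa - \bar{x}_{\kappa'}) + (1 - e^{-\tau})(\mu_\kappa(x) - \mu_{\kappa'}(z)) + (\theta_\kappa(\tau,x) - \theta_{\kappa'}(\tau,z)),
\end{equation*}
and the middle two terms form precisely the convex combination with $\lambda = e^{-\tau}$ appearing in \eqref{a-priori-lower-bound-convex-combination-mu-centres-main-theorem}, whose norm is $\geq L$ by the Strong Separation Condition. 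Since $|x - z| \leq 2$ on $\Omega = B_1$ and $\|\theta_\kappa - \theta_{\kappa'}\|_{L^\infty}$ is small by a weighted Sobolev embedding from the bootstrap on $\X{M}_\kappa$ with $M \geq 2\ceil*{\alpha} + 12$, choosing $L \gg 1$ first and then $\varepsilon, \delta$ small relative to $L$ yields the improved separation $|\zeta_\kappa - \zeta_{\kappa'}| \geq 3L/4$ and renders the tidal kernel bounded; the $e^{-\tau}$ prefactor then provides integrable-in-time smallness that closes the tidal contribution in the energy identity.

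Finally, with $T^\ast = \infty$ and \eqref{global-energy-estimate} established, the definition \eqref{individual-energy-function} of $S_\kappa^M$ gives $\|\dell_\tau \theta_\kappa(\tau)\|_{\X{M}_\kappa}^2 \lesssim \delta(\varepsilon + \sqrt{\delta}) e^{-\beta\tau}$. Hence $\tau \mapsto \|\dell_\tau \theta_\kappa(\tau)\|_{\X{M}_\kappa}$ is integrable on $[0,\infty)$, $\{\theta_\kappa(\tau)\}$ is Cauchy in $\X{M}_\kappa$, and the tail estimate $\|\theta_\kappa(\tau) - \theta_\kappa^{(\infty)}\|_{\X{M}_\kappa} \leq \int_\tau^\infty \|\dell_s \theta_\kappa(s)\|_{\X{M}_\kappa}\, ds$ yields \eqref{theta-infinity-statement}.
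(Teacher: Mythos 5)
Your proposal is correct and follows essentially the same route as the paper: a continuity/bootstrap argument closed by the energy inequality of Section \ref{energy-estimates-section}, with the tidal terms controlled exactly via the convex-combination structure $\lambda=e^{-\tau}$ in the Strong Separation Condition, the curl norms handled separately as in Section \ref{curl-estimates-section}, and the limit $\theta_\kappa^{(\infty)}$ obtained from the exponential decay of $\|\dell_\tau\theta_\kappa\|_{\X{M}_{\kappa}}$ forced by the $\frac{1}{\delta}e^{\beta\tau}$ weight in $S_M$. The only cosmetic differences are that the paper does not bootstrap the separation of the supports (it follows directly from the SSC on the data plus the a priori $L^\infty$ smallness of $\theta_\kappa$) and that it uses the truncated energy $S_M(T/2,\tau)$ to absorb the integral term without Gr\"onwall, neither of which changes the substance of your argument.
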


\subsection{A Priori Assumptions}\label{assumptions-section}

In this section we list all the assumptions we will make to prove Theorem $\ref{main-theorem}$. First we make explicit the assumption, implicit in $\eqref{global-in-time-initial-data-smallness}$, that $\nabla\mu_{\kappa}$ must be small in the $\X{M}_{\kappa}$ norm for $\kappa=1,2,\dots,N$. Let $0<\varepsilon_{1}\ll 1$. We assume
\begin{align}
\left\|\nabla\mu_{\kappa}\right\|_{\X{M}_{\kappa}}\leq\sqrt{\varepsilon_{1}},\ \ \kappa=1,2,\dots,N.\label{a-priori-smallness-mu-norm}
\end{align}
Let $0<\varepsilon_{2}\ll1$. We specify the $L$ for which the Strong Separation Condition $\eqref{a-priori-lower-bound-convex-combination-mu-centres-main-theorem}$ holds. Assume
\begin{align}
\min_{\substack{\kappa,\kappa'=1,2,\dots,N \\ \kappa\neq\kappa'}}\inf_{\lambda\in[0,1]}\inf_{x,z\in\Omega}\left|(1-\lambda)(\mu_{\kappa}(x)-\mu_{\kappa'}(z))+\lambda(\bar{x}_{\kappa}-\bar{x}_{\kappa'})\right|\geq 3+\varepsilon_{2}.\label{a-priori-lower-bound-convex-combination-mu-centres}
\end{align} 
Now we state our a priori assumptions on the solution to $\eqref{delta-rescaled-lagrangian-euler-1}$--$\eqref{delta-rescaled-lagrangian-euler-3}$. There exists a $T>0$, depending on $\varepsilon_{2}$, such that on the time interval $[0,T]$, the solution to $\eqref{delta-rescaled-lagrangian-euler-1}$--$\eqref{delta-rescaled-lagrangian-euler-3}$ satisfies
\begin{align}
&S_{M}(\tau)\leq \varepsilon_{2}, \ \ \forall\tau\in[0,T],\label{a-priori-bound-cumulative-energy}\\
&\left\|\ninv[\kappa]-\mathbb{I}\right\|_{L^{\infty}(\Omega)}\leq\varepsilon_{2},\ \ \kappa=1,2,\dots,N,\ \ \forall\tau\in[0,T],\label{a-priori-bound-grad-zeta}\\
&\left\|\njac-1\right\|_{L^{\infty}(\Omega)}\leq\varepsilon_{2},\ \ \kappa=1,2\dots,N\ \ \forall\tau\in[0,T],\label{a-priori-bound-jacobian}\\
&\sum_{\kappa=1}^{N}\left\|\theta_{\kappa}\right\|_{L^{\infty}(\Omega)}\leq\varepsilon_{2}, \ \ \forall\tau\in[0,T].\label{a-priori-sum-of-theta-L-infinity-norms}
\end{align}
As a justification for why we can make our a priori assumptions, note that such a time interval $[0,T]$ where $\eqref{a-priori-lower-bound-convex-combination-mu-centres}$--$\eqref{a-priori-sum-of-theta-L-infinity-norms}$ hold must exist for small enough initial data by the local well-posedness theory set out in Theorem $\ref{local-well-posedness-theorem}$. Once we have used these assumptions to prove Theorem $\ref{main-theorem}$, we shall use the global-in-time energy bound $\eqref{global-energy-estimate}$ to improve our a priori assumptions, thereby justifying them via a continuity argument.

Just like $\delta$ defined in $\eqref{definition-of-delta}$, both $\varepsilon_{1}$ and $\varepsilon_{2}$ will be need to be small for our proofs. Throughout they will be $\ll1$, and where needed we will state any explicit bounds for them.
\begin{remark}[Asymptotic velocities]\label{asymptotic-velocities-and-possible-initial-configurations} Note that assumption $\eqref{global-in-time-initial-data-smallness}$ means that each repuslive velocity $\mu_{\kappa}$ is very close to a constant vector $\bar{\mu}_{\kappa}$. Thus we can write our initial velocities as
	\begin{align}
	\mathring{u}_{\kappa}(x)=x-\bar{x}_{\kappa}+\bar{\mu}_{\kappa}+o\left(1\right),\ \text{as}\ \varepsilon,\delta\to0, \ x\in\Omega_{\kappa}.\label{initial-velocities-approximate-form}
	\end{align}
As discussed in Section $\ref{flow-map-section}$, $x-\bar{x}_{\kappa}$ corresponds to expansion, and $\bar{\mu}_{\kappa}$ corresponds to repulsion. Moreover, the Lagrangian flow maps, and the Lagrangian velocities take the form
	\begin{align}
	\eta_{\kappa}(t,x)&=x+\bar{x}_{\kappa}+t\left(x+\bar{\mu}_{\kappa}+o\left(1\right)\right),\ \text{as}\ \varepsilon,\delta\to0, \ x\in\Omega,\label{lagrangian-flow-map-approximate-form}\\
	v_{\kappa}(t,x)=u_{\kappa}(t,\eta_{\kappa}(t,x))&=\dell_{t}\eta_{\kappa}(t,x)=x+\bar{\mu}_{\kappa}+o\left(1\right),\ \text{as}\ \varepsilon,\delta\to0, \ x\in\Omega.\label{lagrangian-velocity-approximate-form}
	\end{align}
\end{remark}

\begin{remark}[Examples of constant repulsive velocities]\label{constant-repulsive-velocities} We find a large class of initial configurations that satisfy the Strong Separation Condition $\eqref{a-priori-lower-bound-convex-combination-mu-centres}$ if we set $\mu_{\kappa}(x)\equiv\bar{\mu}_{\kappa}=\bar{x}_{\kappa}$ for each $\kappa=1,2,\dots,N$. Then $\eqref{a-priori-lower-bound-convex-combination-mu-centres}$ simplifies to
	\begin{align}
	\min_{\substack{\kappa,\kappa'=1,2,\dots,N \\ \kappa\neq\kappa'}}\left|\bar{x}_{\kappa}-\bar{x}_{\kappa'}\right|\geq 3+\varepsilon_{2}.\label{simplified-strong-separation-condition}
	\end{align}
Thus, any initial configuration of stars, with repulsive velocity equal to the initial displacement of their centre from the origin, launches a global-in-time solution as long as they are sufficiently separated so that $\eqref{simplified-strong-separation-condition}$ holds. Figure $\ref{fig:initial-mu}$ represents a particularly symmetric example.

\begin{figure}
	\centering
	\begin{tikzpicture}
	%Stars defined anti-clockwise starting from bottom right with expansion suggested
	%\draw[step=1cm,gray,very thin] (-0.1,-0.1) grid (10.1,10.1);
	\filldraw[black] (5,5) circle (2pt) node[anchor=north west] {Origin};
	\filldraw[color=blue!60, fill=blue!5, very thick](6,3) circle (0.25);
	\draw[color=blue!60, very thick, dashed] (6,3) circle (0.4);
	\filldraw[color=blue!60, fill=blue!5, very thick](7,5) circle (0.25);
	\draw[color=blue!60, very thick, dashed] (7,5) circle (0.4);
	\filldraw[color=blue!60, fill=blue!5, very thick](6,7) circle (0.25);
	\draw[color=blue!60, very thick, dashed] (6,7) circle (0.4);
	\filldraw[color=blue!60, fill=blue!5, very thick](4,7) circle (0.25);
	\draw[color=blue!60, very thick, dashed] (4,7) circle (0.4);
	\filldraw[color=blue!60, fill=blue!5, very thick](3,5) circle (0.25);
	\draw[color=blue!60, very thick, dashed] (3,5) circle (0.4);
	\filldraw[color=blue!60, fill=blue!5, very thick](4,3) circle (0.25);
	\draw[color=blue!60, very thick, dashed] (4,3) circle (0.4);
	%Repulsive velocities anti-clockwise starting from bottom right
	\draw[red,thick,->] (6,3)--(7,1);
	\draw[red,thick,->] (7,5)--(9,5);
	\draw[red,thick,->] (6,7)--(7,9);
	\draw[red,thick,->] (4,7)--(3,9);
	\draw[red,thick,->] (3,5)--(1,5);
	\draw[red,thick,->] (4,3)--(3,1);
	%Showing vector to the centre of each star is the same as repulsive velocity anti-clockwise starting from bottom right
	\draw[red,thick,dotted] (5,5)--(6,3);
	\draw[red,thick,dotted] (5,5)--(7,5);
	\draw[red,thick,dotted] (5,5)--(6,7);
	\draw[red,thick,dotted] (5,5)--(4,7);
	\draw[red,thick,dotted] (5,5)--(3,5);
	\draw[red,thick,dotted] (5,5)--(4,3);
	%Expansion anti-clockwise starting from bottom right
	%Star 1
	%\draw[green,thick,->] (6.22361,3.44722)--(6.5,4);
	%\draw[green,thick,->] (5.77639,3.44722)--(5.5,4);
	%\draw[green,thick,->] (5.77639,2.55278)--(5.5,2);
	%\draw[green,thick,->] (6.22361,2.55278)--(6.5,2);
	%Labelling the vectors from origin to centre of each domain
	\draw[black,thick,->] (5,9.5)--(4.5,6) node[pos=0,black,anchor=south] {Initial displacement of centres from origin, $\bar{x}_{\kappa}$.};
	\draw[black,thick,->] (5,9.5)--(5.5,6);
	%Labelling repulsive velocities
	\draw[black,thick,->] (8,3)--(6.5,2) node[pos=0,black,anchor=south west] {Repulsive velocities, $\mu_{\kappa}(x)\equiv\bar{x}_{\kappa}$.};
	\draw[black,thick,->]  (8,3)--(8,5);
	%Labelling stars
	\draw[black,thick,->]  (2,7)--(3.75,7) node[pos=0,black,anchor=north east] {Stars at $\tau=0$.};
	\draw[black,thick,->]  (2,7)--(2.888,5.224);
	%Labelling expansion
	\draw[black,thick,->] (2,2)--(3.6,3) node[pos=0,black,anchor=north east] {Expansion of stars for $\tau>0$.};
	\end{tikzpicture}
	\caption{Example of an initial configuration with $\mu_{\kappa}(x)\equiv\bar{x}_{\kappa}$.} \label{fig:initial-mu}
\end{figure}
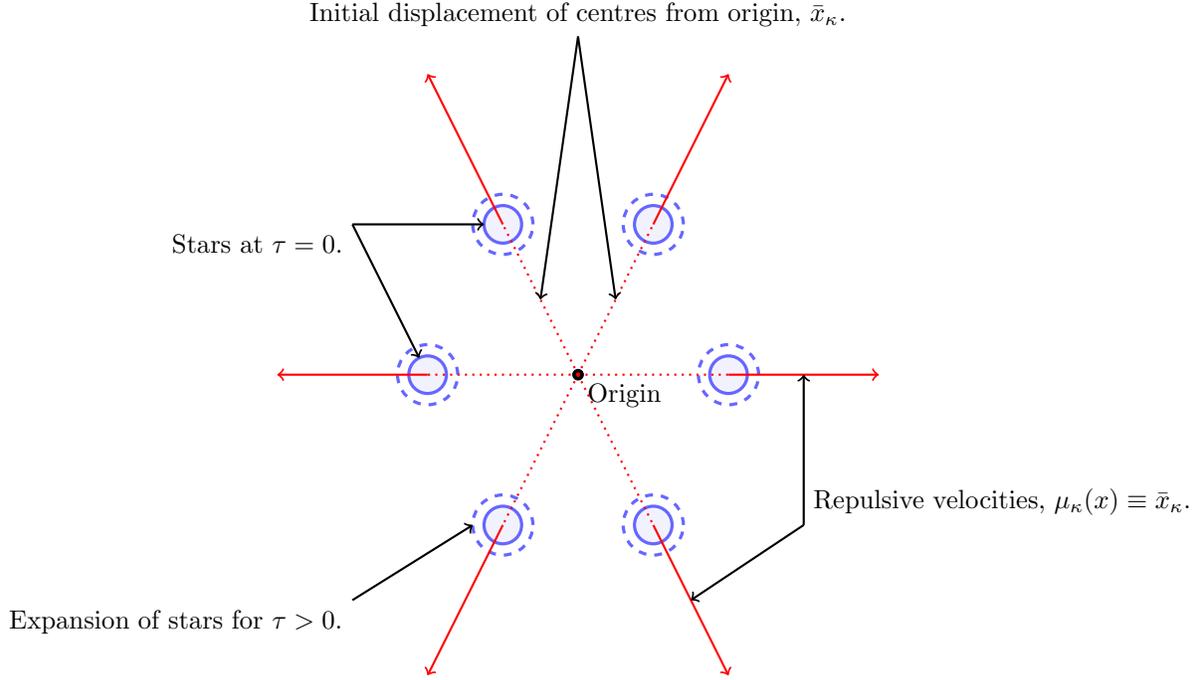
\end{remark}

\section{Estimates for the Gravitational Potential}\label{gravitational-potential-estimates-section}
In this section we obtain the estimates we need for the self interaction and tidal terms defined in $\eqref{lagrangian-gradient-of-lagrangian-potential}$, $\mathscr{G}_{\kappa}$ and $\mathscr{I}_{[\kappa,\kappa']}$, to prove global-in-time existence of our solution. The estimates for $\mathscr{G}_{\kappa}$ will be closely follow the methods used in~\cite{HaJa4}, as they have to deal with the corresponding term in the One Body Euler-Poisson system.

However, the tidal terms $\mathscr{I}_{[\kappa,\kappa']}$ will clearly only appear in the case of 
two or more interacting bodies, and in the estimates for these terms we make crucial use of the Strong Separation Condition $\eqref{a-priori-lower-bound-convex-combination-mu-centres}$.

\subsection{Tidal Term Estimates}

\begin{proposition}\label{tidal-term-estimates-proposition}
	Assume $\gamma=1+\frac{1}{n}$ for some $n\in\mathbb{N}$, or $\gamma\in(1,14/13)$. Let $\{(\dell_{\tau}\theta_{\kappa},\theta_{\kappa})|\kappa=1,2,\dots,N\}$ be the solution to $(\ref{delta-rescaled-lagrangian-euler-1})-(\ref{delta-rescaled-lagrangian-euler-3})$ on $[0,T]$, in the sense of Theorem \ref{local-well-posedness-theorem}, for some $M\geq2\ceil*{\alpha}+12$. Suppose the assumptions $\eqref{a-priori-smallness-mu-norm}$--$\eqref{a-priori-sum-of-theta-L-infinity-norms}$ hold. Assume that $\nabla\tilde{W}_{\kappa}\in\X{M}_{\kappa}$ for $\kappa=1,2,\dots,N$. Then for all $\tau\in[0,T]$, we have:
	\begin{align}
	\sum_{\kappa'\neq\kappa}\left(\sum_{m+|\underline{n}|=0}^{M}\int_{\Omega}\chi\tilde{W}_{\kappa}^{\alpha+m}\left|\Ndell{m}{n}\mathscr{I}_{[\kappa,\kappa']}\right|^{2}dx+\sum_{|\underline{k}|=0}^{M}\int_{\Omega}\bar{\chi}\tilde{W}_{\kappa}^{\alpha}\left|\nabla^{\underline{k}}\mathscr{I}_{[\kappa,\kappa']}\right|^{2}dx\right)\lesssim \delta^{2\alpha}e^{-2\tau}.\label{tidal-term-estimates-statement}
	\end{align}
\end{proposition}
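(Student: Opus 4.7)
The plan is to exploit two facts: the tidal term $\mathscr{I}_{[\kappa,\kappa']}$ already carries an explicit factor $\delta^{\alpha}e^{-\tau}$, and under the Strong Separation Condition combined with the a priori bounds, the integrand kernel has a uniformly bounded Euclidean denominator. So the strategy is essentially a pointwise bound on each derivative of $\mathscr{I}_{[\kappa,\kappa']}$ followed by integration against the weights $\tilde{W}_{\kappa}^{\alpha+m}$ (resp.\ $\tilde{W}_{\kappa}^{\alpha}$).

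First I would establish the crucial lower bound: for every $\tau\geq0$, $x,z\in\Omega$, and $\kappa\neq\kappa'$,
\begin{equation*}
\bigl|\zeta_{\kappa}(\tau,x)-\zeta_{\kappa'}(\tau,z)\bigr|\geq c_{0}>0,
\end{equation*}
independently of $\tau$. Writing $\zeta_{\kappa}(\tau,x)-\zeta_{\kappa'}(\tau,z)=(x-z)+\bigl[(1-e^{-\tau})(\mu_{\kappa}(x)-\mu_{\kappa'}(z))+e^{-\tau}(\bar{x}_{\kappa}-\bar{x}_{\kappa'})\bigr]+(\theta_{\kappa}(\tau,x)-\theta_{\kappa'}(\tau,z))$, the bracket is a convex combination with parameter $\lambda=e^{-\tau}\in[0,1]$, hence at least $3+\varepsilon_{2}$ by $\eqref{a-priori-lower-bound-convex-combination-mu-centres}$. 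Since $|x-z|\leq2$ and $\|\theta_{\kappa}\|_{L^{\infty}},\|\theta_{\kappa'}\|_{L^{\infty}}\leq\varepsilon_{2}$ by $\eqref{a-priori-sum-of-theta-L-infinity-norms}$, the triangle inequality yields $|\zeta_{\kappa}-\zeta_{\kappa'}|\geq 1-\varepsilon_{2}\geq\tfrac{1}{2}$ for $\varepsilon_{2}$ small. This is the entire source of smallness in denominators.

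Next I would apply the operators $\Ndell{m}{n}$ and $\nabla^{\underline{k}}$ of order at most $M$ to $\mathscr{I}_{[\kappa,\kappa']}$, differentiating inside the integral in $x$ only. Using Faà di Bruno and the Leibniz rule, each derivative produces a finite sum of terms of the form
\begin{equation*}
\delta^{\alpha}e^{-\tau}\int_{\Omega}\frac{P_{m,n}\bigl(D^{\leq M}_{x}\zeta_{\kappa}(\tau,x)\bigr)\,Q_{m,n}\bigl(\zeta_{\kappa}(\tau,x)-\zeta_{\kappa'}(\tau,z)\bigr)}{\bigl|\zeta_{\kappa}(\tau,x)-\zeta_{\kappa'}(\tau,z)\bigr|^{p}}\tilde{W}_{\kappa'}^{\alpha}(z)\,dz,
\end{equation*}
where $P_{m,n}$ is a polynomial in $x$-derivatives of $\zeta_{\kappa}$ of total order $\leq m+|\underline n|$ (or $\leq|\underline k|$), $Q_{m,n}$ is a polynomial in the vector $\zeta_{\kappa}-\zeta_{\kappa'}$, and $p\leq 2M+3$. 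Recalling that $\zeta_{\kappa}(\tau,x)=x+e^{-\tau}\bar x_{\kappa}+(1-e^{-\tau})\mu_{\kappa}(x)+\theta_{\kappa}(\tau,x)$, every derivative of $\zeta_{\kappa}$ of order $\geq 1$ reduces to the identity plus a derivative of $\mu_{\kappa}$ or $\theta_{\kappa}$, controlled by $\|\nabla\mu_{\kappa}\|_{\X{M}_{\kappa}}$ (via $\eqref{a-priori-smallness-mu-norm}$) and $S_{M}(\tau)^{1/2}$ (via $\eqref{a-priori-bound-cumulative-energy}$). The polynomial $Q_{m,n}$ is pointwise bounded by a constant depending only on the diameter of $\Omega$, $|\bar x_{\kappa}|$, $\|\mu_{\kappa}\|_{L^{\infty}}$, and $\varepsilon_{2}$. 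The denominator is bounded below by Step 1, so $1/|\zeta_{\kappa}-\zeta_{\kappa'}|^{p}\lesssim 1$.

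Combining these observations, I obtain a pointwise bound of the form $\bigl|\Ndell{m}{n}\mathscr{I}_{[\kappa,\kappa']}(\tau,x)\bigr|\lesssim \delta^{\alpha}e^{-\tau}\,R(\tau,x)$, where $R$ is a polynomial in the derivatives of $\theta_{\kappa},\mu_{\kappa}$ at $x$ of order at most $M$, times the fixed finite mass $\int_{\Omega}\tilde{W}_{\kappa'}^{\alpha}$. Squaring and integrating against $\chi\tilde{W}_{\kappa}^{\alpha+m}$ (resp.\ $\bar\chi\tilde{W}_{\kappa}^{\alpha}$) and summing over $m+|\underline n|\leq M$ (resp.\ $|\underline k|\leq M$) produces the $\X{M}_{\kappa}$-type norm of $R$, which via standard Moser-type product estimates is dominated by a polynomial in $S_{M}(\tau)+\|\nabla\mu_{\kappa}\|_{\X{M}_{\kappa}}^{2}+1$, itself bounded by the a priori assumptions $\eqref{a-priori-smallness-mu-norm}$, $\eqref{a-priori-bound-cumulative-energy}$. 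Pulling out the uniform $\delta^{2\alpha}e^{-2\tau}$ gives $\eqref{tidal-term-estimates-statement}$. The main technical obstacle is performing the Moser-type bookkeeping for the highest-order term, where one factor of the polynomial $P_{m,n}$ has $M$ derivatives on $\theta_{\kappa}$ or $\mu_{\kappa}$ and the others must be placed in $L^{\infty}$ via a Sobolev-type embedding adapted to the weighted space $\X{b}_{\kappa}$; the assumption $M\geq 2\lceil\alpha\rceil+12$ is exactly what is needed to make this embedding work near the degenerate boundary.
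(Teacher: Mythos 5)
Your proposal is correct and follows essentially the same route as the paper's proof: the uniform lower bound on $\left|\zeta_{\kappa}(\tau,x)-\zeta_{\kappa'}(\tau,z)\right|$ obtained from the Strong Separation Condition with $\lambda=e^{-\tau}$ together with the a priori $L^{\infty}$ bound on the $\theta_{\kappa}$, then Leibniz/Fa\`a di Bruno expansion of the derivatives of the kernel, placing all but the highest-order factor (necessarily of order $\leq M/2$) in weighted $L^{\infty}$ via the embedding of Lemma $\ref{L-infinity-energy-space-bound}$ and the top-order factor in the weighted $L^{2}$ energy norm, controlled by $S_{M}(\tau)$ and $\left\|\nabla\mu_{\kappa}\right\|_{\X{M}_{\kappa}}$, with the prefactor $\delta^{2\alpha}e^{-2\tau}$ pulled out. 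This matches the paper's treatment of the terms $\mathcal{I}_{1}$, $\mathcal{I}_{2}$, $\mathcal{I}_{3}$ and its weight bookkeeping, so no further comparison is needed.
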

	
	\begin{proof}[\textbf{Proof}]
		We begin with the integrals on the left hand side of $\eqref{tidal-term-estimates-statement}$ that are localised on $\supp{\chi}$. For $i=1,2,3$, we have
		\begin{align}
		&\Ndell{m}{n}\mathscr{I}_{[\kappa,\kappa']}^{i}=\delta^{\alpha}e^{-\tau}\Ndell{m}{n}\left(\int_{\Omega}\frac{\left(\zeta_{\kappa}^{i}(x)-\zeta_{\kappa'}^{i}(z)\right)\tilde{W}_{\kappa'}^{\alpha}}{\left|\zeta_{\kappa}(x)-\zeta_{\kappa'}(z)\right|^{3}}dz\right)\nonumber\\
		&=\underbrace{\delta^{\alpha}e^{-\tau}\Ndell{m}{n}\zeta_{\kappa}^{i}(x)\int_{\Omega}\frac{\tilde{W}_{\kappa'}^{\alpha}dz}{\left|\zeta_{\kappa}(x)-\zeta_{\kappa'}(z)\right|^{3}}}_{\mathcal{I}_{1}}-\underbrace{\delta^{\alpha}e^{-\tau}\int_{\Omega}\Ndell{m}{n}\left(\frac{1}{\left|\zeta_{\kappa}(x)-\zeta_{\kappa'}(z)\right|^{3}}\right)\zeta_{\kappa'}^{i}(z)\tilde{W}_{\kappa'}^{\alpha}dz}_{\mathcal{I}_{2}}\nonumber\\
		&+\underbrace{\sum_{\substack{a+c=m \\ |\underline{b}|+|\underline{d}|=|\underline{n}| \\ a+|\underline{b}|\leq m+|\underline{n}|-1}}\delta^{\alpha}e^{-\tau}\mathscr{L}(a,\underline{b},c,\underline{d})\Ndell{a}{b}\zeta_{\kappa}^{i}(x)\int_{\Omega}\Ndell{c}{d}\left(\frac{1}{\left|\zeta_{\kappa}(x)-\zeta_{\kappa'}(z)\right|^{3}}\right)\zeta_{\kappa'}^{i}(z)\tilde{W}_{\kappa'}^{\alpha}dz}_{\mathcal{I}_{3}},\label{tidal-term-estimates-1}
		\end{align}
		due to applying the Liebniz rule to $\Ndell{m}{n}\mathscr{I}^{i}_{[\kappa,\kappa']}$, and the last term on the right hand side, $\mathcal{I}_{3}$, is every resulting term except for the cases when all the derivatives fall on either $\left(\zeta_{\kappa}^{i}(x)-\zeta_{\kappa'}^{i}(z)\right)$, which is $\mathcal{I}_{1}$, or $\left|\zeta_{\kappa}(x)-\zeta_{\kappa'}(z)\right|^{-3}$, which is $\mathcal{I}_{2}$. The $\mathscr{L}(a,\underline{b},c,\underline{d})$ are constant coefficients resulting from the application of the Leibniz rule.
		
		We will show the estimates for $\mathcal{I}_{1}$ and $\mathcal{I}_{2}$, with $\mathcal{I}_{3}$ following similarly. Since these tidal terms measure the gravitational interaction between the stars, it is natural that the separation of the bodies would influence our estimates, and indeed for both $\mathcal{I}_{1}$ and $\mathcal{I}_{2}$, the Strong Separation Condition $\eqref{a-priori-lower-bound-convex-combination-mu-centres}$ is used. We begin with $\mathcal{I}_{1}$ which is the simplest term, and then move on to $\mathcal{I}_{2}$ which requires more care, especially when applying the Leibniz rule to $\Ndell{m}{n}\left(\left|\zeta_{\kappa}(x)-\zeta_{\kappa'}(z)\right|^{-3}\right)$.\\
		
		\noindent \textbf{Bound for $\mathcal{I}_{1}$.}\\
		
		\noindent We have that $\zeta_{\kappa}(\tau,x)=x+e^{-\tau}\bar{x}_{\kappa}+(1-e^{-\tau})\mu_{\kappa}(x)+\theta_{\kappa}(\tau,x)$, and so
		\begin{align}
		\frac{1}{\left|\zeta_{\kappa}(x)-\zeta_{\kappa'}(z)\right|^{3}}&=\frac{1}{\left|(1-e^{-\tau})(\mu_{\kappa}(x)-\mu_{\kappa'}(z))+e^{-\tau}(\bar{x}_{\kappa}-\bar{x}_{\kappa'})+(x-z)+(\theta_{\kappa}(\tau,x)-\theta_{\kappa'}(\tau,z))\right|^{3}}\nonumber\\
		&\leq\frac{1}{\bigg\vert\left|(1-e^{-\tau})(\mu_{\kappa}(x)-\mu_{\kappa'}(z))+e^{-\tau}(\bar{x}_{\kappa}-\bar{x}_{\kappa'})\right|-\left|(x-z)+(\theta_{\kappa}(\tau,x)-\theta_{\kappa'}(\tau,z))\right|\bigg\vert^{3}}\nonumber\\
		&\leq1,\label{denominator-tidal-term-integral-bound}
		\end{align}
		where the first bound is due to the reverse triangle inequality, and the second is due to the Strong Separation Condition $\eqref{a-priori-lower-bound-convex-combination-mu-centres}$, with $\lambda=e^{-\tau}$, as well as the bound
		\begin{align*}
		\left|(x-z)+(\theta_{\kappa}(\tau,x)-\theta_{\kappa'}(\tau,z))\right|\leq2 +\sup_{0\leq\tau\leq T}\left(\sum_{\kappa=1}^{N}\left\|\theta_{\kappa}\right\|_{L^{\infty}(\Omega)}\right)\leq2+\varepsilon_{2},
		\end{align*}
		due to the a priori assumption $\eqref{a-priori-sum-of-theta-L-infinity-norms}$. Thus
		\begin{align}
		&\int_{\Omega}\chi\tilde{W}_{\kappa}^{\alpha+m}\left|\mathcal{I}_{1}\right|^{2}dx\lesssim\delta^{2\alpha}e^{-2\tau}\int_{\Omega}\chi\tilde{W}_{\kappa}^{\alpha+m}\left|\Ndell{m}{n}\zeta_{\kappa}(x)\int_{\Omega}\frac{\tilde{W}_{\kappa'}^{\alpha}dz}{\left|\zeta_{\kappa}(x)-\zeta_{\kappa'}(z)\right|^{3}}\right|^{2}dx\lesssim\delta^{2\alpha}e^{-2\tau}\int_{\Omega}\chi\tilde{W}_{\kappa}^{\alpha+m}\left|\Ndell{m}{n}\zeta_{\kappa}\right|^{2}dx\nonumber\\
		&\lesssim\delta^{2\alpha}e^{-2\tau}\int_{\Omega}\chi\tilde{W}_{\kappa}^{\alpha+m}\left|\Ndell{m}{n}\theta_{\kappa}\right|^{2}dx+\delta^{2\alpha}e^{-2\tau}\int_{\Omega}\chi\tilde{W}_{\kappa}^{\alpha+m}\left|\Ndell{m}{n}\left(x+e^{-\tau}\bar{x}_{\kappa}+(1-e^{-\tau})\mu_{\kappa}\right)\right|^{2}dx\nonumber\\
		&\lesssim\delta^{2\alpha}e^{-2\tau}\left(1+S_{M}(\tau)\right)+\delta^{2\alpha}e^{-2\tau}(1-e^{-\tau})\int_{\Omega}\chi\tilde{W}_{\kappa}^{\alpha+m}\left|\Ndell{m}{n}\mu_{\kappa}\right|^{2}dx.\label{tidal-term-I-1-estimate-1}
		\end{align}
		For the second bound, we use $(\ref{denominator-tidal-term-integral-bound})$ and the fact that $\tilde{W}_{\kappa}$ is bounded on $\Omega$ for all $\kappa$ to bound the $z$ integral by a constant. Note that $x+e^{-\tau}\bar{x}_{\kappa}$ is smooth so we can bound derivatives of this term in $L^{\infty}(\Omega)$. We  bound the $\theta_{\kappa}$ term by the cumulative energy function $S_{M}$. It remains to bound the $\mu_{\kappa}$ on the right hand side.
		
		If $m+|\underline{n}|>0$, then using $\eqref{angular-derivative-def}$ and $\eqref{radial-derivative-def}$, we have
		\begin{align}
		\delta^{2\alpha}e^{-2\tau}(1-e^{-\tau})\int_{\Omega}\chi\tilde{W}_{\kappa}^{\alpha+m}\left|\Ndell{m}{n}\mu_{\kappa}\right|^{2}dx\lesssim\delta^{2\alpha}e^{-2\tau}\sum_{a+|\underline{b}|=0}^{m+|\underline{n}|}\int_{\Omega}\chi\tilde{W}_{\kappa}^{\alpha+a}\left|\Ndell{a}{b}\nabla\mu_{\kappa}\right|^{2}dx\lesssim\delta^{2\alpha}e^{-2\tau}\label{tidal-term-I-1-estimate-2}
		\end{align}
		as $\nabla\mu_{\kappa}\in\X{M}_{\kappa}$ from $\eqref{a-priori-smallness-mu-norm}$. If $m+|\underline{n}|=0$, it is enough to bound $\mu_{\kappa}$ in $L^{\infty}(\Omega)$ as $\chi$ and $\tilde{W}_{\kappa}^{\alpha}$ are bounded in $L^{\infty}(\Omega)$. We have
		\begin{align}
		\left\|\mu_{\kappa}\right\|_{L^{\infty}(\Omega)}\lesssim1+\left\|\nabla\mu_{\kappa}\right\|_{\X{M}_{\kappa}}\lesssim1.\label{mu-L-infinity-estimate}
		\end{align}
		To obtain this bound, we use the mean value theorem and $\eqref{L-infinity-energy-space-bound-no-weights-statement-1}$ in Lemma $\ref{L-infinity-energy-space-bound}$, as well as $\nabla\mu_{\kappa}\in\X{M}_{\kappa}$ again. Combining bounds $\eqref{tidal-term-I-1-estimate-1}$--$\eqref{mu-L-infinity-estimate}$ and applying a priori assumption $\eqref{a-priori-bound-cumulative-energy}$, we obtain
		\begin{align}
		\int_{\Omega}\chi\tilde{W}_{\kappa}^{\alpha}\left|\mathcal{I}_{1}\right|^{2}dx\lesssim\delta^{2\alpha}e^{-2\tau}(1+S_{M}(\tau))\lesssim\delta^{2\alpha}e^{-2\tau}.\label{tidal-term-I-1-estimate-3}
		\end{align}
		\noindent \textbf{Bound for $\mathcal{I}_{2}$.}\\
		
		\noindent When $m+|\underline{n}|=0$, we have that
		\begin{align}
		\int_{\Omega}\chi\tilde{W}_{\kappa}^{\alpha+m}\left|\mathcal{I}_{2}\right|^{2}dx&\lesssim\delta^{2\alpha}e^{-2\tau}\int_{\Omega}\chi\tilde{W}_{\kappa}^{\alpha+m}\left|\int_{\Omega}\frac{\zeta_{\kappa'}(z)\tilde{W}_{\kappa'}^{\alpha}dz}{\left|\zeta_{\kappa}(x)-\zeta_{\kappa'}(z)\right|^{3}}\right|^{2}dx\lesssim\delta^{2\alpha}e^{-2\tau}\left\|\zeta_{\kappa'}\right\|_{L^{\infty}(\Omega)}^{2}\lesssim\delta^{2\alpha}e^{-2\tau}.\label{tidal-term-I-2-zero-order-estimate}
		\end{align}
		Here we have used $(\ref{denominator-tidal-term-integral-bound})$, and the fact that $\chi$ and $\tilde{W}_{\kappa}$ are in $L^{\infty}(\Omega)$ for all $\kappa\in\{1,2,\dots,N\}$. To bound $\left\|\zeta_{\kappa'}\right\|_{L^{\infty}(\Omega)}$ we have
		\begin{align}
		\left\|\zeta_{\kappa'}\right\|_{L^{\infty}(\Omega)}\lesssim1+\left\|\theta_{\kappa'}\right\|_{L^{\infty}(\Omega)}+\left\|\mu_{\kappa'}\right\|_{L^{\infty}(\Omega)}.\label{tidal-term-I-2-zero-order-estimate-2}
		\end{align}
		Then we use $\eqref{L-infinity-energy-space-bound-no-weights-statement-1}$ in Lemma $\ref{L-infinity-energy-space-bound}$ for the $\theta_{\kappa'}$ term, and $\eqref{mu-L-infinity-estimate}$ for the $\mu_{\kappa'}$ term to obtain
		\begin{align}
		\left\|\zeta_{\kappa'}\right\|_{L^{\infty}(\Omega)}\lesssim1+\left\|\theta_{\kappa'}\right\|_{\X{M}_{\kappa'}}+\left\|\nabla\mu_{\kappa'}\right\|_{\X{M}_{\kappa'}}\lesssim1+\sqrt{S_{M}(\tau)}\label{zeta-L-infinity-bound}
		\end{align}
		for all $\kappa'\in\{1,2,\dots,N\}$. Bounds $\eqref{tidal-term-I-2-zero-order-estimate-2}$ and $\eqref{zeta-L-infinity-bound}$ along with the a priori assumption $\eqref{a-priori-bound-cumulative-energy}$ give the final bound in $\eqref{tidal-term-I-2-zero-order-estimate}$.
		
		Now we concentrate on the case where $m+|\underline{n}|\geq1$. First we write $\Ndell{m}{n}\left(\left|\zeta_{\kappa}(x)-\zeta_{\kappa'}(\tau.z)\right|^{-3}\right)$ as
		\begin{align}
		\sum_{p=1}^{m+|\underline{n}|}\sum_{\substack{a+|\underline{b}|=m+|\underline{n}|-p \\ c+\sum_{lq} d_{lq}=p \\ a+c=m \\ |\underline{b}|+\sum_{lq}d_{lq}=|\underline{n}|}}\frac{\Ndell{a}{b}\left(\left(\rdell\zeta_{\kappa}\cdot\left(\zeta_{\kappa}-\zeta_{\kappa'}\right)\right)^{c}\left(\angdell_{12}\zeta_{\kappa}\cdot\left(\zeta_{\kappa}-\zeta_{\kappa'}\right)\right)^{d_{12}}\left(\angdell_{23}\zeta_{\kappa}\cdot\left(\zeta_{\kappa}-\zeta_{\kappa'}\right)\right)^{d_{23}}\left(\angdell_{13}\zeta_{\kappa}\cdot\left(\zeta_{\kappa}-\zeta_{\kappa'}\right)\right)^{d_{13}}\right)}{\left|\zeta_{\kappa}-\zeta_{\kappa'}\right|^{3+2p}},\label{tidal-term-I-2-denominator-derivative-expansion-1}
		\end{align}
		so to bound $\Ndell{m}{n}(\left|\zeta_{\kappa}(x)-\zeta_{\kappa'}(z)\right|^{-3})$ effectively, it is enough to bound each of the terms in the sum separately, for every valid choice of $p,a,\underline{b},c,d_{12},d_{23}$, and $d_{13}$. Strictly speaking the correct expansion would include varying constants in front of every term in $(\ref{tidal-term-I-2-denominator-derivative-expansion-1})$ depending on each index being summed over. They have all been set to $1$ as they are not important when finding sufficient bounds for each term. We can write every separate term in $(\ref{tidal-term-I-2-denominator-derivative-expansion-1})$ as
		\begin{align}
		\sum_{\sum_{i}(a_{i}+|\underline{b}_{i}|+\sum_{lq}(a^{(lq)}_{i}+|\underline{b}^{(lq)}_{i}|))=a+|\underline{b}|}\frac{1}{\left|\zeta_{\kappa}-\zeta_{\kappa'}\right|^{3+2p}}\prod_{i=1}^{c}\rdell^{a_{i}}\angdell^{\underline{b}_{i}}\left(\rdell\zeta_{\kappa}\cdot\left(\zeta_{\kappa}-\zeta_{\kappa'}\right)\right)\prod_{\substack{lq\in\{12,23,13\}\\1\leq i\leq d_{lq}}}\rdell^{a_{i}^{(lq)}}\angdell^{\underline{b}^{(lq)}_{i}}\left(\angdell_{lq}\zeta_{\kappa}\cdot\left(\zeta_{\kappa}-\zeta_{\kappa'}\right)\right),\label{tidal-term-I-2-denominator-derivative-expansion-2}
		\end{align}
		where, once again, constants in front of each term in the sum have been set to $1$. For any of the $c, d_{12}, d_{23}$ or $d_{13}$ equal to $0$, the product above is an empty product, equal to $1$.
		
		Finally we can expand the derivatives above to see that $(\ref{tidal-term-I-2-denominator-derivative-expansion-2})$ can be written as a linear combination of terms of the form
		\begin{align}
		\left|\zeta_{\kappa}-\zeta_{\kappa'}\right|^{-(3+2p)}\prod_{i=1}^{c}\left(\rdell^{a_{i_{1}}+1}\angdell^{\underline{b}_{i_{1}}}\zeta_{\kappa}\cdot\rdell^{a_{i_{2}}}\angdell^{\underline{b}_{i_{2}}}\left(\zeta_{\kappa}-\zeta_{\kappa'}\right)\right)\prod_{lq\in\{12,23,13\}}\prod_{i=1}^{d_{lq}}\left(\rdell^{a_{i_{1}}^{(lq)}}\angdell^{\underline{b}^{(lq)}_{i_{1}}}\angdell_{lq}\zeta_{\kappa}\cdot\rdell^{a_{i_{2}}^{(lq)}}\angdell^{\underline{b}^{(lq)}_{i_{2}}}\left(\zeta_{\kappa}-\zeta_{\kappa'}\right)\right),\label{tidel-term-I-2-denominator-derivative-expansion-3}
		\end{align}
		subject to the condition that
		\begin{align}
		a_{i_{1}}+|\underline{b}_{i_{1}}|+a_{i_{2}}+|\underline{b}_{i_{2}}|&=a_{i}+|\underline{b}_{i}|,\label{tidal-term-I-2-denominator-derivative-expansion-index-condition-1}\\
		a_{i_{1}}^{(lq)}+|\underline{b}_{i_{1}}^{(lq)}|+a_{i_{2}}^{(lq)}+|\underline{b}_{i_{2}}^{(lq)}|&=a_{i}^{(lq)}+|\underline{b}_{i}^{(lq)}|,\ \ lq\in\{12,23,13\}.\label{tidal-term-I-2-denominator-derivative-expansion-index-condition-2}
		\end{align}
		Thus we see that
		\begin{align}
		\sum_{i=1}^{c}\left(a_{i_{1}}+|\underline{b}_{i_{1}}|+a_{i_{2}}+|\underline{b}_{i_{2}}|\right)+\sum_{lq\in\{12,23,13\}}\sum_{i=1}^{d_{lq}}\left(a_{i_{1}}^{(lq)}+|\underline{b}_{i_{1}}^{(lq)}|+a_{i_{2}}^{(lq)}+|\underline{b}_{i_{2}}^{(lq)}|\right)=a+|\underline{b}|=m+|\underline{n}|-p.\label{tidal-term-I-2-denominator-derivative-expansion-index-condition-3}
		\end{align}
		Then note that we have
		\begin{align}
		&\left|\int_{\Omega}\frac{\zeta_{\kappa'}\tilde{W}_{\kappa'}^{\alpha}}{\left|\zeta_{\kappa}-\zeta_{\kappa'}\right|^{3+2p}}\prod_{i=1}^{c}\left(\rdell^{a_{i_{1}}+1}\angdell^{\underline{b}_{i_{1}}}\zeta_{\kappa}\cdot\rdell^{a_{i_{2}}}\angdell^{\underline{b}_{i_{2}}}\left(\zeta_{\kappa}-\zeta_{\kappa'}\right)\right)\prod_{lq\in\{12,23,13\}}\prod_{i=1}^{d_{lq}}\left(\rdell^{a_{i_{1}}^{(lq)}}\angdell^{\underline{b}^{(lq)}_{i_{1}}}\angdell_{lq}\zeta_{\kappa}\cdot\rdell^{a_{i_{2}}^{(lq)}}\angdell^{\underline{b}^{(lq)}_{i_{2}}}\left(\zeta_{\kappa}-\zeta_{\kappa'}\right)\right)dz\right|^{2}\nonumber\\
		&\lesssim\int_{\Omega}\tilde{W}_{\kappa'}^{2\alpha}\left|\rdell^{a_{1}}\angdell^{\underline{b}_{1}}\zeta_{\kappa}\right|^{2}\dots\left|\rdell^{a_{p_{1}}}\angdell^{\underline{b}_{p_{1}}}\zeta_{\kappa}\right|^{2}\left|\zeta_{\kappa}-\zeta_{\kappa'}\right|^{2(2p-p_{1})}\left|\zeta_{\kappa'}\right|^{2}dz\lesssim\left|\rdell^{a_{1}}\angdell^{\underline{b}_{1}}\zeta_{\kappa}\right|^{2}\dots\left|\rdell^{a_{p_{1}}}\angdell^{\underline{b}_{p_{1}}}\zeta_{\kappa}\right|^{2}.\label{tidal-term-I-2-estimate-1}
		\end{align}
		For the first inequality, we first use $(\ref{denominator-tidal-term-integral-bound})$ to bound $|\zeta_{\kappa}-\zeta_{\kappa'}|^{-(3+2p)}$ in $L^{\infty}(\Omega)$. We then apply $L^{2}(\Omega)$ Cauchy-Schwartz to the $z$ integral over $\Omega$, and apply $\mathbb{R}^{3}$ Cauchy-Schwartz to obtain
		\begin{align}
		\left|\Ndell{e}{f}\zeta_{\kappa}\cdot\Ndell{g}{h}\left(\zeta_{\kappa}-\zeta_{\kappa'}\right)\right|\leq\left|\Ndell{e}{f}\zeta_{\kappa}\right|\left|\Ndell{g}{h}\left(\zeta_{\kappa}-\zeta_{\kappa'}\right)\right|\label{tidal-term-I-2-inner-product-terms}
		\end{align}
		To see that we can write the upper bound as on the second line in $(\ref{tidal-term-I-2-estimate-1})$, notice that if $g+|\underline{h}|\geq1$, then $\Ndell{g}{h}\left(\zeta_{\kappa}-\zeta_{\kappa'}\right)=\Ndell{g}{h}\zeta_{\kappa}$. Thus once we bound all terms $\Ndell{e}{f}\zeta_{\kappa}\cdot\Ndell{g}{h}\left(\zeta_{\kappa}-\zeta_{\kappa'}\right)$ like $(\ref{tidal-term-I-2-inner-product-terms})$, from the resulting upper bound we can collect the $p_{1}$ terms that look like $\left|\Ndell{g}{h}\zeta_{\kappa}\right|$, and the $2p-p_{1}$ terms that look like $|\zeta_{\kappa}-\zeta_{\kappa'}|$.
		
		The second inequality in $(\ref{tidal-term-I-2-estimate-1})$ is because $\left|\rdell^{a_{1}}\angdell^{\underline{b}_{1}}\zeta_{\kappa}\right|^{2}\dots\left|\rdell^{a_{p_{1}}}\angdell^{\underline{b}_{p_{1}}}\zeta_{\kappa}\right|^{2}$ has no $z$ dependence and can be taken outside of the integral. The remaining integrand is $\tilde{W}_{\kappa'}^{2\alpha}\left|\zeta_{\kappa}-\zeta_{\kappa'}\right|^{2(2p-p_{1})}\left|\zeta_{\kappa'}\right|^{2}$ which can be bounded in $L^{\infty}(\Omega)$, using the assumptions on $\tilde{W}_{\kappa'}$ in Definition $\ref{definition-of-W}$, as well as $\eqref{mu-L-infinity-estimate}$, $\eqref{zeta-L-infinity-bound}$, and the a priori assumption $(\ref{a-priori-bound-cumulative-energy})$. Therefore we look to bound
		\begin{align}
		\delta^{2\alpha}e^{-2\tau}\int_{\Omega}\chi\tilde{W}^{\alpha+m}_{\kappa}\left|\rdell^{a_{1}}\angdell^{\underline{b}_{1}}\zeta_{\kappa}\right|^{2}\dots\left|\rdell^{a_{p_{1}}}\angdell^{\underline{b}_{p_{1}}}\zeta_{\kappa}\right|^{2}dx.\label{tidal-term-estimate-I-2-estimate-2}
		\end{align}
		We can assume without loss of generality that $1\leq a_{1}+|\underline{b}_{1}|\leq\dots\leq a_{p_{1}}+|\underline{b}_{p_{1}}|\leq m+|\underline{n}|$. We write
		\begin{align}
		&\delta^{2\alpha}e^{-2\tau}\int_{\Omega}\chi\tilde{W}^{\alpha+m}_{\kappa}\left|\rdell^{a_{1}}\angdell^{\underline{b}_{1}}\zeta_{\kappa}\right|^{2}\dots\left|\rdell^{a_{p_{1}}}\angdell^{\underline{b}_{p_{1}}}\zeta_{\kappa}\right|^{2}dx\nonumber\\
		&=\delta^{2\alpha}e^{-2\tau}\int_{\Omega}\chi\tilde{W}^{m-\sum a_{i}}_{\kappa}\tilde{W}_{\kappa}^{a_{1}}\left|\rdell^{a_{1}}\angdell^{\underline{b}_{1}}\zeta_{\kappa}\right|^{2}\dots\tilde{W}_{\kappa}^{\alpha+a_{p_{1}}}\left|\rdell^{a_{p_{1}}}\angdell^{\underline{b}_{p_{1}}}\zeta_{\kappa}\right|^{2}dx.\label{tidal-term-estimate-I-2-estimate-3}
		\end{align}
		For $i=1,\dots,p_{1}-1$, we have $a_{i}+|\underline{b}_{i}|\leq a_{p_{1}}+|\underline{b}_{p_{1}}|$, and therefore $a_{i}+|\underline{b}_{i}|\leq (m+|\underline{n}|)/2\leq M/2$. Thus, we can apply $\eqref{L-infinity-energy-space-bound-weights-statement-1}$ from Lemma $\ref{L-infinity-energy-space-bound}$ to $\tilde{W}_{\kappa}^{a_{i}}\left|\rdell^{a_{i}}\angdell^{\underline{b}_{i}}\zeta_{\kappa}\right|^{2}$ for $i=1,\dots,p_{1}-1$, and bound these terms in $L^{\infty}(\Omega)$. We obtain
		\begin{align}
		&\delta^{2\alpha}e^{-2\tau}\int_{\Omega}\chi\tilde{W}^{m-\sum a_{i}}_{\kappa}\tilde{W}_{\kappa}^{a_{1}}\left|\rdell^{a_{1}}\angdell^{\underline{b}_{1}}\zeta_{\kappa}\right|^{2}\dots\tilde{W}_{\kappa}^{\alpha+a_{p_{1}}}\left|\rdell^{a_{p_{1}}}\angdell^{\underline{b}_{p_{1}}}\zeta_{\kappa}\right|^{2}dx\nonumber\\
		&\lesssim\delta^{2\alpha}e^{-2\tau}(1+S_{M}(\tau))^{p_{1}-1}\int_{\Omega}\chi\tilde{W}^{m-\sum a_{i}}_{\kappa}\tilde{W}_{\kappa}^{\alpha+a_{p_{1}}}\left|\rdell^{a_{p_{1}}}\angdell^{\underline{b}_{p_{1}}}\theta_{\kappa}\right|^{2}dx\nonumber\\
		&+\delta^{2\alpha}e^{-2\tau}(1+S_{M}(\tau))^{p_{1}-1}\int_{\Omega}\chi\tilde{W}^{m-\sum a_{i}}_{\kappa}\tilde{W}_{\kappa}^{\alpha+a_{p_{1}}}\left|\rdell^{a_{p_{1}}}\angdell^{\underline{b}_{p_{1}}}(x+e^{-\tau}\bar{x}_{\kappa}+(1-e^{-\tau})\mu_{\kappa}(x))\right|^{2}dx.\label{tidal-term-estimate-I-2-estimate-4}
		\end{align}
		The $\theta_{\kappa}$ term on the right hand side can be bounded by the cumulative energy function $S_{M}$. For the remaining term, we use $\eqref{tidal-term-I-1-estimate-2}$ to bound the $\mu_{\kappa}$ term, and note that we can bound $\tilde{W}_{\kappa}^{m-\sum a_{i}}$ in $L^{\infty}(\Omega)$, as $\sum a_{i}\leq m$. Thus we obtain
		\begin{align}
		\delta^{2\alpha}e^{-2\tau}\int_{\Omega}\chi\tilde{W}^{m-\sum a_{i}}_{\kappa}\tilde{W}_{\kappa}^{a_{1}}\left|\rdell^{a_{1}}\angdell^{\underline{b}_{1}}\zeta_{\kappa}\right|^{2}\dots\tilde{W}_{\kappa}^{\alpha+a_{p_{1}}}\left|\rdell^{a_{p_{1}}}\angdell^{\underline{b}_{p_{1}}}\zeta_{\kappa}\right|^{2}dx\lesssim\delta^{2\alpha}e^{-2\tau}\left(1+S_{M}(\tau)\right)^{p_{1}}\lesssim\delta^{2\alpha}e^{-2\tau},\label{tidal-term-estimate-I-2-estimate-5}
		\end{align}	
		where the last bound is due to the a priori assumption $\eqref{a-priori-bound-cumulative-energy}$.
				
		The bounds $(\ref{tidal-term-I-2-zero-order-estimate})$ and $(\ref{tidal-term-estimate-I-2-estimate-5})$ cover all possible cases for $\mathcal{I}_{2}$, thus we have
		\begin{align}
		\int_{\Omega}\chi\tilde{W}_{\kappa}^{\alpha+m}\left|\mathcal{I}_{2}\right|^{2}dx\lesssim\delta^{2\alpha}e^{-2\tau},\label{tidal-term-estimate-I-2-estimate-6}
		\end{align}
		as required.\\
		
		\noindent\textbf{Bound for $\mathcal{I}_{3}$.}\\
		
		\noindent Similarly to $\mathcal{I}_{2}$, to bound any of the terms in the sum that forms $\mathcal{I}_{3}$ it is sufficient to bound all terms of the form
		\begin{align}
		\delta^{2\alpha}e^{-2\tau}\int_{\Omega}\chi\tilde{W}^{\alpha+m}_{\kappa}\left|\rdell^{a_{1}}\angdell^{\underline{b}_{1}}\zeta_{\kappa}\right|^{2}\dots\left|\rdell^{a_{p}}\angdell^{\underline{b}_{p}}\zeta_{\kappa}\right|^{2}\left|\rdell^{a}\angdell^{\underline{b}}\zeta_{\kappa}\right|^{2}dx,\label{tidal-term-estimate-I-3-estimate-1}
		\end{align}
		subject to the condition that $a+\sum a_{i}=m$, and $|\underline{b}|+\sum|\underline{b}_{i}|=|\underline{n}|$. This requires exactly the same strategy as $(\ref{tidal-term-estimate-I-2-estimate-4})$, and so we immediately obtain
		\begin{align}
		\int_{\Omega}\chi\tilde{W}_{\kappa}^{\alpha+m}\left|\mathcal{I}_{3}\right|^{2}dx\lesssim\delta^{2\alpha}e^{-2\tau}.\label{tidal-term-estimate-I-3-estimate-2}
		\end{align}
		The bounds in $(\ref{tidal-term-I-1-estimate-1})$, $(\ref{tidal-term-estimate-I-2-estimate-6})$, and $(\ref{tidal-term-estimate-I-3-estimate-2})$, along with analogous estimates on $\supp{\bar{\chi}}$ give the proposition.
	\end{proof}
		Now we move on to estimates for the gravitational potentials acting on each separate body coming from their own mass.	
	
\subsection{Self Interaction Term Estimates}
In this section we estimate the term coming from the gravitational effect of a body in this system on itself, $\mathscr{G}_{\kappa}$. Let us recall that we can write 
		\begin{align}
		\mathscr{G}_{\kappa}^{i}(\tau,x)=\delta^{\alpha}e^{-\tau}\int_{\Omega}\frac{\dell_{k}\left(\tilde{W}_{\kappa}^{\alpha}\ninv[\kappa]^{k}_{i}\right)}{\left|\zeta_{\kappa}(\tau,x)-\zeta_{\kappa}(\tau,z)\right|}dz.\label{self-interaction-term-greens-function}
		\end{align}
		The key estimate for $\mathscr{G}_{\kappa}$ is laid out in the following proposition.
		
		\begin{proposition}\label{self-interaction-term-estimates-proposition}
			Assume $\gamma=1+\frac{1}{n}$ for some $n\in\mathbb{Z}_{\geq2}$, or $\gamma\in(1,14/13)$. Let $\{(\dell_{\tau}\theta_{\kappa},\theta_{\kappa})|\kappa=1,2,\dots,N\}$ be the solution to $(\ref{delta-rescaled-lagrangian-euler-1})-(\ref{delta-rescaled-lagrangian-euler-3})$ on $[0,T]$, in the sense of Theorem \ref{local-well-posedness-theorem}, for some $M\geq2\ceil*{\alpha}+12$. Suppose the assumptions $\eqref{a-priori-smallness-mu-norm}$--$\eqref{a-priori-sum-of-theta-L-infinity-norms}$ hold. Assume that $\nabla\tilde{W}_{\kappa}\in\X{M}_{\kappa}$ for $\kappa=1,2,\dots,N$. Then for all $\tau\in[0,T]$, we have:
			\begin{align}
			\sum_{\kappa=1}^{N}\left(\sum_{m+|\underline{n}|=0}^{M}\int_{\Omega}\chi\tilde{W}_{\kappa}^{\alpha+m}\left|\Ndell{m}{n}\mathscr{G}_{\kappa}\right|^{2}dx+\sum_{|\underline{k}|=0}^{M}\int_{\Omega}\bar{\chi}\tilde{W}_{\kappa}^{\alpha}\left|\nabla^{\underline{k}}\mathscr{G}_{\kappa}\right|^{2}dx\right)\lesssim \delta^{2\alpha}e^{-2\tau}.\label{self-interaction-term-estimates-statement}
			\end{align}
		\end{proposition}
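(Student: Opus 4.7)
The plan is to mirror the structure of the proof of Proposition~\ref{tidal-term-estimates-proposition} for the tidal terms, since the prefactor $\delta^{\alpha}e^{-\tau}$ is identical and will produce the required $\delta^{2\alpha}e^{-2\tau}$ bound after squaring. The essential difference is that the kernel $1/|\zeta_{\kappa}(\tau,x)-\zeta_{\kappa}(\tau,z)|$ is singular on the diagonal $x=z$, so we cannot bound it in $L^{\infty}$ as we did using the Strong Separation Condition in $\eqref{denominator-tidal-term-integral-bound}$. Instead we will exploit the fact that the flow map is a small perturbation of the identity to compare this kernel to the Euclidean one, and use the integrability of the Newton potential in $\mathbb{R}^{3}$ together with the structure of the numerator $\partial_{k}(\tilde{W}_{\kappa}^{\alpha}\ninv[\kappa]^{k}_{i})$.

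First, since the integrand of $\mathscr{G}_{\kappa}^{i}$ depends on $x$ only through the kernel, we apply $\Ndell{m}{n}$ in $x$ directly to $|\zeta_{\kappa}(x)-\zeta_{\kappa}(z)|^{-1}$ using the same Fa\`{a} di Bruno--style expansion as in $\eqref{tidal-term-I-2-denominator-derivative-expansion-1}$--$\eqref{tidel-term-I-2-denominator-derivative-expansion-3}$, producing a linear combination of terms of the schematic form
\begin{align*}
\left|\zeta_{\kappa}(x)-\zeta_{\kappa}(z)\right|^{-(1+2p)}\prod_{i=1}^{p}\Ndell{a_i}{b_i}\zeta_{\kappa}(x)\cdot\bigl(\zeta_{\kappa}(x)-\zeta_{\kappa}(z)\bigr),
\end{align*}
with $\sum(a_{i}+|\underline{b}_{i}|)=m+|\underline{n}|$. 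Pointwise Cauchy--Schwarz on each of the $p$ inner products extracts $p$ factors of $|\zeta_{\kappa}(x)-\zeta_{\kappa}(z)|$, reducing the effective kernel to $|\zeta_{\kappa}(x)-\zeta_{\kappa}(z)|^{-(1+p)}$ multiplied by a product of $|\Ndell{a_i}{b_i}\zeta_{\kappa}(x)|$. The a priori bound $\eqref{a-priori-bound-grad-zeta}$ implies that $\zeta_{\kappa}(\tau,\cdot)$ is bi-Lipschitz close to the identity uniformly in $\tau\in[0,T]$, so $|\zeta_{\kappa}(x)-\zeta_{\kappa}(z)|\gtrsim|x-z|$. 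As in the tidal estimate, Lemma~\ref{L-infinity-energy-space-bound} puts all but one of the $\Ndell{a_i}{b_i}\zeta_{\kappa}(x)$ factors into $L^{\infty}(\Omega)$, controlled by $1+\sqrt{S_{M}(\tau)}$ (after the usual decomposition $\zeta_{\kappa}=x+e^{-\tau}\bar{x}_{\kappa}+(1-e^{-\tau})\mu_{\kappa}+\theta_{\kappa}$, with $\mu_{\kappa}$ handled via $\eqref{a-priori-smallness-mu-norm}$ and $\eqref{mu-L-infinity-estimate}$).

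The main obstacle is that after $p$ differentiations the kernel $|x-z|^{-(1+p)}$ approaches or exceeds the $L^{1}_{\text{loc}}$ threshold in $\mathbb{R}^{3}$ when $p\ge 2$, so a naive pointwise bound followed by Cauchy--Schwarz will fail for high-order terms. This is precisely the technical issue addressed in \cite{HaJa4}; the resolution is to convert $x$-derivatives on the Newton kernel into $z$-derivatives (using, at lowest order, $\partial_{x_{k}}|x-z|^{-1}=-\partial_{z_{k}}|x-z|^{-1}$, and at higher order the careful $\Ndell{}{}$ analogues which pick up lower-order harmless commutator terms) and then integrate by parts in $z$, pushing derivatives onto $\partial_{k}(\tilde{W}_{\kappa}^{\alpha}\ninv[\kappa]^{k}_{i})$. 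The Piola identity $\eqref{piola-identity}$ and the product-rule identities $\eqref{rescaled-inverse-differentiation-formula}$ keep the resulting integrand controlled by polynomials in the derivatives of $\theta_{\kappa}$ with appropriate $\tilde{W}_{\kappa}$ weights. The remaining integrals are then Riesz-type potentials of order at most $2$, which are bounded on $\Omega$ by Hardy--Littlewood--Sobolev, and whose $L^{2}_{w}$ bounds are dominated by $1+S_{M}(\tau)$.

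Combining the three ingredients---the $\delta^{2\alpha}e^{-2\tau}$ prefactor, the $L^{\infty}$ bounds on lower-order derivatives of $\zeta_{\kappa}$, and the Riesz potential bounds on the singular convolution---yields
\begin{align*}
\sum_{m+|\underline{n}|=0}^{M}\int_{\Omega}\chi\,\tilde{W}_{\kappa}^{\alpha+m}\left|\Ndell{m}{n}\mathscr{G}_{\kappa}\right|^{2}dx\lesssim\delta^{2\alpha}e^{-2\tau}\bigl(1+S_{M}(\tau)\bigr)^{C},
\end{align*}
which by $\eqref{a-priori-bound-cumulative-energy}$ is $\lesssim\delta^{2\alpha}e^{-2\tau}$. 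The $\bar{\chi}$-localised integrals are handled by the same scheme with rectangular derivatives $\nabla^{\underline{k}}$ in place of $\Ndell{m}{n}$, using that $\tilde{W}_{\kappa}\sim1$ on $\supp{\bar{\chi}}$ so no degeneracy is present and the argument is in fact simpler. Summing over $\kappa$ gives $\eqref{self-interaction-term-estimates-statement}$.
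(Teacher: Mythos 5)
Your proposal correctly isolates the new difficulty relative to the tidal terms (the diagonal singularity of the kernel), and your treatment of the prefactor, the bi-Lipschitz comparison $|\zeta_{\kappa}(x)-\zeta_{\kappa}(z)|\gtrsim|x-z|$, and the $L^{\infty}$ handling of lower-order factors of $\zeta_{\kappa}$ are all in line with what the paper does. However, the step you lean on to resolve the singularity --- ``convert $x$-derivatives on the kernel into $z$-derivatives and integrate by parts, pushing derivatives onto $\dell_{k}(\tilde{W}_{\kappa}^{\alpha}\ninv[\kappa]^{k}_{i})$, then conclude by Riesz-potential bounds'' --- has a genuine gap for the \emph{normal} (radial) derivatives near the vacuum boundary, and that is exactly where the paper's proof has to do something structurally different. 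The transfer-and-integrate-by-parts device (Lemma $\ref{repeated-angular-derivatives-integration-by-parts}$, going back to Gu--Lei and \cite{HaJa4}) works only for the angular fields $\angdell$, which are tangent to $\dell\Omega$, so no boundary terms arise; $\rdell$ is not tangent to $\dell\Omega$, so the analogous manipulation produces boundary contributions. Worse, pushing up to $M$ derivatives onto $\dell_{k}(\tilde{W}_{\kappa}^{\alpha}\ninv[\kappa]^{k}_{i})$ generates factors behaving like $\tilde{W}_{\kappa}^{\alpha-j}$ with $j$ up to $M\geq2\ceil*{\alpha}+12>\alpha$, which are not even locally integrable against the Newtonian kernel near $\dell\Omega$ and are not controlled by the hypothesis $\nabla\tilde{W}_{\kappa}\in\X{M}_{\kappa}$, since that only provides control with strong $\tilde{W}_{\kappa}$ weights that your $z$-integrals do not carry. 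The closing claim that the surviving convolutions are Riesz potentials of order at most $2$ is asserted, not established, and it is precisely what fails once more than one derivative is left on the kernel.

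The paper's proof avoids this entirely by using the elliptic structure of $\mathscr{G}_{\kappa}$ for the normal direction: since $\mathscr{G}_{\kappa}$ is a Lagrangian gradient of a potential solving the Poisson equation, one has $\ndiv\mathscr{G}_{\kappa}=4\pi\delta^{\alpha}e^{-3\tau}\tilde{W}_{\kappa}^{\alpha}\njac_{\kappa}^{-1}$ and $\nCurl\mathscr{G}_{\kappa}=0$, and the identity $\rdell\mathscr{G}_{\kappa}^{i}=y^{i}\dive\mathscr{G}_{\kappa}-y^{k}[\Curl\mathscr{G}_{\kappa}]^{k}_{i}-\angdell_{ik}\mathscr{G}_{\kappa}^{k}$ converts one radial derivative into divergence, curl, and angular terms. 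An induction on the number of radial derivatives then closes, with the small error $(\varepsilon_{1}+\varepsilon_{2})\left\|\mathscr{G}_{\kappa}\right\|_{\X{M}_{\kappa}}^{2}$ absorbed at the end; the tangential base case is Proposition $\ref{self-interaction-term-tangential-estimates-proposition}$, and the interior estimate uses an additional near/far cutoff decomposition ($\chi_{\nu},\bar{\chi}_{\nu}$) rather than simply ``the same scheme with rectangular derivatives.'' To repair your argument you would need either to reproduce this div--curl/Poisson mechanism for the radial derivatives or to supply a genuinely different, and currently missing, way to control high-order normal derivatives of the singular integral with the available weighted norms.
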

		We begin with estimates for the integrals near the boundary, on $\supp{\chi}$. The strategy follows the strategy in~\cite{HaJa4}. We first bound tangential derivatives of $\mathscr{G}$, and use these, as well as the Poisson equation  $\Delta\phi=\rho$, to close estimates in the normal direction. The esimtates for tangential derivatives of $\mathscr{G}_{\kappa}$ near the boundary are stated in the following proposition:		
		\begin{proposition}[Tangential self-interaction estimates]\label{self-interaction-term-tangential-estimates-proposition}	Assume $\gamma=1+\frac{1}{n}$ for some $n\in\mathbb{Z}_{\geq2}$, or $\gamma\in(1,14/13)$. Let $\{(\dell_{\tau}\theta_{\kappa},\theta_{\kappa})|\kappa=1,2,\dots,N\}$ be the solution to $(\ref{delta-rescaled-lagrangian-euler-1})-(\ref{delta-rescaled-lagrangian-euler-3})$ on $[0,T]$, in the sense of Theorem \ref{local-well-posedness-theorem}, for some $M\geq2\ceil*{\alpha}+12$. Suppose the assumptions $\eqref{a-priori-smallness-mu-norm}$--$\eqref{a-priori-sum-of-theta-L-infinity-norms}$ hold. Assume that $\nabla\tilde{W}_{\kappa}\in\X{M}_{\kappa}$ for $\kappa=1,2,\dots,N$. Then for all $\tau\in[0,T]$, we have:
			\begin{align}
			\sum_{\kappa=1}^{N}\sum_{|\underline{n}|=0}^{M}\int_{\Omega}\chi\tilde{W}_{\kappa}^{\alpha}\left|\angdell^{\underline{n}}\mathscr{G}_{\kappa}\right|^{2}dx\lesssim \delta^{2\alpha}e^{-2\tau}.\label{self-interaction-term-tangential-estimates-statement}
			\end{align}
		\end{proposition}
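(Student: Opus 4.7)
The plan is to follow the strategy of \cite{HaJa4}, which estimates the analogous self-interaction term in the one-body problem. The key observation is that the $\delta^\alpha e^{-\tau}$ prefactor already sitting in front of $\mathscr{G}_\kappa$ in \eqref{self-interaction-term-greens-function} is precisely what we need; once we pull it out, it suffices to show that the remaining expression is uniformly bounded in the weighted $L^2(\tilde W_\kappa^\alpha)$ norm on $\supp \chi$, modulo factors controlled by $1 + S_M(\tau)$, which is itself bounded by the a priori assumption \eqref{a-priori-bound-cumulative-energy}. Unlike for the tidal terms, we cannot exploit separation between bodies, so the Newtonian kernel is genuinely singular at the diagonal $x = z$ and must be handled by potential-theoretic estimates rather than by $L^\infty$ bounds on the kernel.

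First, I would apply $\angdell^{\underline n}$ in $x$ to $\mathscr{G}_\kappa^i$. Since the $x$-dependence appears only inside the Newtonian kernel $1/|\zeta_\kappa(\tau,x) - \zeta_\kappa(\tau,z)|$, by the Leibniz and chain rules the derivatives generate sums of terms of the form $P(\angdell \zeta_\kappa(x), \dots, \angdell^{\underline n} \zeta_\kappa(x)) \cdot K_p(\zeta_\kappa(x) - \zeta_\kappa(z))$, where each $K_p$ behaves like $|\zeta_\kappa(x) - \zeta_\kappa(z)|^{-(1 + p)}$ for $p \le |\underline n|$. The key trick, exploited in \cite{HaJa4}, is to trade tangential derivatives in $x$ for tangential derivatives in $z$ by means of identities such as $\angdell_{ij,x}(\zeta_\kappa^\ell(x) - \zeta_\kappa^\ell(z)) = -\angdell_{ij,z}(\zeta_\kappa^\ell(x) - \zeta_\kappa^\ell(z)) + \text{commutator}$, the commutator being a lower-order polynomial in $\angdell \zeta_\kappa$ evaluated at either $x$ or $z$. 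Iterating this on the kernel and then integrating by parts in $z$, which produces no boundary terms because $\tilde W_\kappa$ vanishes on $\partial \Omega$, moves all $|\underline n|$ tangential derivatives onto the density factor $\dell_k(\tilde W_\kappa^\alpha \ninv[\kappa]^k_i)$.

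What remains is a schematic expression of the form
\[
\angdell^{\underline n} \mathscr{G}_\kappa^i = \delta^\alpha e^{-\tau} \int_\Omega \frac{Q_{\underline n}(\tau,z)}{|\zeta_\kappa(\tau,x) - \zeta_\kappa(\tau,z)|}\, dz + (\text{lower-order analogues}),
\]
where $Q_{\underline n}$ involves up to $|\underline n| + 1$ derivatives of $\tilde W_\kappa$, $\ninv[\kappa]$, and $\zeta_\kappa$. A Newtonian-potential estimate in three dimensions gives $\big\|\int_\Omega f(z)/|\zeta_\kappa(x) - \zeta_\kappa(z)|\, dz\big\|_{L^q_x(\Omega)} \lesssim \|f\|_{L^p(\Omega)}$ for suitable $p, q$, with the implicit constant uniform once one uses $\njac_\kappa \sim 1$ from \eqref{a-priori-bound-jacobian} to change variables to Eulerian coordinates, and the right-hand side is controlled by the individual energy $S_\kappa^M(\tau)$ together with the hypothesis $\nabla \tilde W_\kappa \in \X{M}_\kappa$, after Lemma \ref{L-infinity-energy-space-bound} is invoked to place mid-order factors in $L^\infty$. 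The main obstacle is the bookkeeping: when $|\underline n|$ is large the commutator terms generated by the $x \leftrightarrow z$ exchange produce many products of tangential derivatives of $\zeta_\kappa$, and one has to check that in every product at most one factor carries top-order derivatives (which is then estimated in $L^2$ weighted by $\tilde W_\kappa^\alpha$ against the Newtonian kernel), while all other factors, being of order $\le M/2$, are bounded uniformly via Lemma \ref{L-infinity-energy-space-bound} and the a priori bound $S_M \le \varepsilon_2$. Once this combinatorial bookkeeping is organized, the smallness of $S_M$ and the global $\delta^{2\alpha} e^{-2\tau}$ prefactor deliver \eqref{self-interaction-term-tangential-estimates-statement}.
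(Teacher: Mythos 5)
Your proposal takes essentially the same route as the paper, which itself defers to \cite{HaJa4}: the core mechanism in both is the Gu--Lei-type exchange $\angdell_{x}=(\angdell_{x}+\angdell_{z})-\angdell_{z}$ followed by integration by parts in $z$ (Lemma \ref{repeated-angular-derivatives-integration-by-parts}), which keeps the kernel at the integrable $\left|x-z\right|^{-1}$ singularity while moving the tangential derivatives onto the density factor, after which Newtonian potential bounds and the $L^{2}$--$L^{\infty}$ splitting close the estimate. The only caution is one of ordering: the exchange must be applied derivative by derivative, since the kernels $\left|\zeta_{\kappa}(x)-\zeta_{\kappa}(z)\right|^{-(1+p)}$ with $p\geq2$ that your opening step produces are not locally integrable in $\mathbb{R}^{3}$ and cannot be repaired after the fact.
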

	
	\begin{proof}[\textbf{Proof}]
		The proof relies on the following lemma, which was first used by Gu and Lei \cite{GuLei} in the context of the spatial domain being $\mathbb{T}^{2}\times\mathbb{R}$ rather than the closed unit ball. It was then adapted in to our setting by Had\v{z}i\'{c} and Jang \cite{HaJa4}. For a full proof of the proposition, see \cite{HaJa4}, whose methods apply directly to our case.
	\end{proof}
	
	\begin{lemma}\label{repeated-angular-derivatives-integration-by-parts}
		Let $\angdell_{x}$ and $\angdell_{z}$ denote angular derivatives in the $x$ and $z$ variables respetively. Let $h_{1}:\mathbb{R}^{3}\rightarrow\mathbb{R}$, and $h_{2}:\mathbb{R}^{3}\times\mathbb{R}^{3}\rightarrow\mathbb{R}$. Let $\underline{\nu}\in\mathbb{Z}_{\geq0}^{3}$. Then
		\begin{align}
		\angdell_{x}^{\underline{\nu}}\int_{\Omega}h_{1}(z)h_{2}(x,z)dz=\sum_{|\underline{\nu}'|\leq|\underline{\nu}|}c_{\underline{\nu}'}\int_{\Omega}\angdell_{z}^{\underline{\nu}-\underline{\nu}'}h_{1}\left(\angdell_{x}+\angdell_{z}\right)^{\underline{\nu}'}h_{2}dz,
		\end{align}
		for some constants $c_{\underline{\nu}'}$.
	\end{lemma}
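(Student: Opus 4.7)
The proof plan rests on two observations. First, each angular derivative $\angdell_{ij}=z_i\partial_{z_j}-z_j\partial_{z_i}$ is tangential to the boundary of $\Omega=B_1$ (its only component along the outward radial normal $n=z$ is $z_in_j-z_jn_i=0$) and has vanishing divergence ($\partial_{z_j}z_i-\partial_{z_i}z_j=0$), so integration by parts of $\angdell_{z,ij}$ over $\Omega$ produces neither boundary contributions nor coefficient terms. Second, since the operators $\angdell_x$ and $\angdell_z$ act on disjoint variables, they commute, and we have the trivial splitting $\angdell_{x,ij}=(\angdell_{x,ij}+\angdell_{z,ij})-\angdell_{z,ij}$, which is the entire algebraic input of the lemma.

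I would prove the base case $|\underline{\nu}|=1$ directly. Differentiating under the integral and using the splitting above,
\begin{align*}
\angdell_{x,ij}\int_\Omega h_1(z)h_2(x,z)\,dz
&=\int_\Omega h_1(z)\bigl[(\angdell_{x,ij}+\angdell_{z,ij})-\angdell_{z,ij}\bigr]h_2(x,z)\,dz.
\end{align*}
Integration by parts on the second summand, justified by the boundary/divergence observations above, moves $\angdell_{z,ij}$ from $h_2$ onto $h_1$ and flips its sign, producing
\begin{align*}
\int_\Omega h_1(z)(\angdell_{x,ij}+\angdell_{z,ij})h_2\,dz+\int_\Omega(\angdell_{z,ij}h_1)h_2\,dz,
\end{align*}
which is exactly the claimed identity with $c_{\underline{\nu}'}\in\{1\}$.

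For the inductive step, suppose the identity holds for all multi-indices of total order $\leq k$, and let $\underline\nu$ have $|\underline\nu|=k+1$. Write $\angdell_x^{\underline\nu}=\angdell_{x,ij}\angdell_x^{\underline\nu-e_{ij}}$ for some basic factor $\angdell_{x,ij}$, apply the inductive hypothesis to the inner operator, and then apply the base-case identity to each of the resulting terms. Crucially, $\angdell_{x,ij}$ commutes with $\angdell_{z}$, so it passes through any factor $\angdell_z^{\underline\nu-\underline\nu'}$ landing on $h_1$ without picking up commutators, while on the $h_2$ side it combines with $\angdell_z^{\underline\nu-\underline\nu'}$ in a way that, after one more application of the base case (and integration by parts in $z$), yields integrals whose $h_1$ factor is acted on by some $\angdell_z^{\underline\nu-\underline{\nu}''}$ and whose $h_2$ factor is acted on by some monomial in $(\angdell_x+\angdell_z)$ of total order $|\underline{\nu}''|\leq|\underline\nu|$. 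Collecting terms of equal total order on $h_2$ yields the sum displayed in the statement, with integer coefficients $c_{\underline\nu'}$ arising from the combinatorics of this bookkeeping.

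The main obstacle I expect is purely notational: tracking the combinatorial coefficients cleanly through the induction, given that the three basic operators $\angdell_{12},\angdell_{13},\angdell_{23}$ satisfy the nontrivial relation $[\angdell_{ij},\angdell_{jk}]=\angdell_{ik}$ and so do not freely commute with one another. This is not a real obstruction because the identity is asserted only with \emph{some} constants $c_{\underline\nu'}$, not specific ones, and any commutator produced during the induction is itself a first-order angular derivative which can be absorbed into a term of lower order on the right-hand side. All genuine analytic content—namely, the legitimacy of integration by parts and the absence of boundary terms—is already captured by the first observation above; the remainder is a multi-variable Leibniz rule in disguise.
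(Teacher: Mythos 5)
Your proposal is correct and is essentially the paper's own argument: the paper proves the lemma in one line by citing the splitting $\angdell_{x}=(\angdell_{x}+\angdell_{z})-\angdell_{z}$ together with integration by parts in $z$, which is exactly your base case, and your induction plus the observation that the angular fields are tangential to $\dell\Omega$ and divergence-free (so integration by parts produces no boundary or coefficient terms) simply makes explicit what the paper leaves implicit.
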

	
	\begin{proof}[\textbf{Proof}] The proof is a combination of the identity $\angdell_{x}=\left(\angdell_{x}+\angdell_{z}\right)-\angdell_{z}$, as well as integration by parts in the $z$ variable.
	\end{proof}
	We now use Proposition $\ref{self-interaction-term-tangential-estimates-proposition}$ to prove Proposition $\ref{self-interaction-term-estimates-proposition}$. Once again, the proof strategy very closely follows that of the one in \cite{HaJa4}.
	
	\begin{proof}[\textbf{Proof of Proposition $\ref{self-interaction-term-estimates-proposition}$}]
		Recall that we are looking to prove
		\begin{align}
		\sum_{\kappa=1}^{N}\left(\sum_{m+|\underline{n}|=0}^{M}\int_{\Omega}\chi\tilde{W}_{\kappa}^{\alpha+m}\left|\Ndell{m}{n}\mathscr{G}_{\kappa}\right|^{2}dx+\sum_{|\underline{k}|=0}^{M}\int_{\Omega}\bar{\chi}\tilde{W}_{\kappa}^{\alpha}\left|\nabla^{\underline{k}}\mathscr{G}_{\kappa}\right|^{2}dx\right)\lesssim \delta^{2\alpha}e^{-2\tau}.\label{self-interaction-term-estimates-1}
		\end{align}
		We first concentrate on the interior bounds, localised on $\supp{\bar{\chi}}$. Recall the definition of $\chi$ and $\bar{\chi}$ as smooth radial cutoff functions in $\eqref{cutoff-function}$ and $\eqref{cutoff-function-conjugate}$. We know that $\supp{\bar{\chi}}=B_{\frac{3}{4}}$, the ball of radius $3/4$ around $0$. Define another smooth radial function $\bar{\chi}_{\nu}$ such that
		\begin{align}
		\supp{\bar{\chi}_{\nu}}=B_{\frac{3}{4}+\nu}\label{cutoff-function-conjugate-epsilon}
		\end{align}
		such that $\frac{3}{4}+\nu\in(3/4,1)$. Correspondingly we define
		\begin{align}
		\chi_{\nu}=1-\bar{\chi}_{\nu}.\label{cutoff-function-epsilon}
		\end{align}
		To bound the integrals on $\supp{\bar{\chi}}$ we have
		\begin{align}
		\sum_{\kappa=1}^{N}\sum_{|\underline{k}|=0}^{M}\int_{\Omega}\bar{\chi}\tilde{W}_{\kappa}^{\alpha}\left|\nabla^{\underline{k}}\mathscr{G}_{\kappa}\right|^{2}dx&\lesssim\delta^{2\alpha}e^{-2\tau}\sum_{\kappa=1}^{N}\sum_{|\underline{k}|=0}^{M}\int_{\Omega}\bar{\chi}\tilde{W}_{\kappa}^{\alpha}\left|\nabla^{\underline{k}}\int_{\Omega}\frac{\chi_{\nu}\dell_{k}\left(\ninv[\kappa]^{k}_{*}\tilde{W}_{\kappa}^{\alpha}\right)}{\left|\zeta_{\kappa}(\tau,x)-\zeta_{\kappa}(\tau,z)\right|}dz\right|^{2}dx\nonumber\\
		&+\delta^{2\alpha}e^{-2\tau}\sum_{\kappa=1}^{N}\sum_{|\underline{k}|=0}^{M}\int_{\Omega}\bar{\chi}\tilde{W}_{\kappa}^{\alpha}\left|\nabla^{\underline{k}}\int_{\Omega}\frac{\bar{\chi}_{\nu}\dell_{k}\left(\ninv[\kappa]^{k}_{*}\tilde{W}_{\kappa}^{\alpha}\right)}{\left|\zeta_{\kappa}(\tau,x)-\zeta_{\kappa}(\tau,z)\right|}dz\right|^{2}dx.\label{self-interaction-term-interior-estimates-1}
		\end{align}
		For the first term on the right hand side above, we can use the methods we used in Proposition $\ref{tidal-term-estimates-proposition}$ as
		\begin{align*}
		0<\nu\leq|x-z|\lesssim	\left|\zeta_{\kappa}(\tau,x)-\zeta_{\kappa}(\tau,z)\right|
		\end{align*} 
		for $x\in\supp{\bar{\chi}}$, $z\in\supp{\chi_{\nu}}$, using the mean value theorem and $\eqref{a-priori-bound-grad-zeta}$. For the second term we can use the same methods as Proposition $\ref{self-interaction-term-tangential-estimates-proposition}$, noting that $\bar{\chi}_{\nu}$ and its derivatives are $0$ on $\dell\Omega$, so there are no boundary terms when we integrate by parts to get a obtain identity to Lemma $\ref{repeated-angular-derivatives-integration-by-parts}$. Therefore, we have
		\begin{align}
		\sum_{\kappa=1}^{N}\sum_{|\underline{k}|=0}^{M}\int_{\Omega}\bar{\chi}\tilde{W}_{\kappa}^{\alpha}\left|\nabla^{\underline{k}}\mathscr{G}_{\kappa}\right|^{2}dx\lesssim\delta^{2\alpha}e^{-2\tau}.\label{self-interaction-term-interior-estimates-2}
		\end{align}
		Now we concentrate on the $\supp{\chi}$ bounds. As in~\cite{HaJa4}, we use induction on the number of radial derivatives. For now we fix a $\kappa$. Our induction hypothesis is
		\begin{align}
		\sum_{0\leq |\underline{n}|\leq M-a}\int_{\Omega}\chi\tilde{W}_{\kappa}^{\alpha+a}\left|\Ndell{m}{n}\mathscr{G}_{\kappa}\right|^{2}dx\leq C(a)\left(\delta^{2\alpha}e^{-2\tau}+(\varepsilon_{1}+\varepsilon_{2})\left\|\mathscr{G}_{\kappa}\right\|_{\X{M}_{\kappa}}^{2}\right),\label{self-interaction-term-estimates-2}
		\end{align}
		where $C(a)$ is a constant depending only on the number of radial derivatives and our a priori assumptions $\eqref{a-priori-smallness-mu-norm}$--$\eqref{a-priori-bound-jacobian}$, and $\varepsilon_{1},\varepsilon_{2}$ are the constants mentioned in the a priori assumptions, Section $\ref{assumptions-section}$. The case where $m=0$ has been dealt with in Proposition $\ref{self-interaction-term-tangential-estimates-proposition}$. Now assume it holds for some $0<a<M$. Following \cite{HaJa4}, we use the Poisson equation $\eqref{euler-4}$ to write $\rdell\mathscr{G}_{\kappa}$ in a form amenable to the necessary estimates.
		
		Recall that $\eqref{euler-4}$ gives us the relation $\Delta\phi=4\pi\rho$, and we know from $\eqref{definition-cumulative-density}$ that $\rho$ is the cumulative density defined by $\rho=\sum_{\kappa}\rho_{\kappa}$, so, where $\rho,\{\rho_{\kappa}\}$ are thought of as functions on $\mathbb{R}^{3}$ with compact support, we can define $\phi_{\kappa}$ to be the solution to
		\begin{align}
		\Delta\phi_{\kappa}=4\pi\rho_{\kappa}\label{kappa-separated-elliptic-equation}
		\end{align}
		on $\mathbb{R}^{3}$, with $\phi_{\kappa}\rightarrow0$ as $|x|\rightarrow\infty$ as the boundary condition. We know that $\phi_{\kappa}=\Phi*\rho_{\kappa}$, where $\Phi$ is the fundamental solution, so in Lagrangian coordinates we have
		\begin{align}
		\ninv[\kappa]^{k}_{i}\dell_{k}\phi_{\kappa}(t,\eta_{\kappa}(t,x))=-\mathscr{G}_{\kappa}^{i},
		\end{align} 
		where we recall the definition of $\mathscr{G}_{\kappa}$ from $\eqref{lagrangian-gradient-of-lagrangian-potential}$. We take the Lagrangian divergence of this relationship, and comparing to the pullback via $\eta_{\kappa}$ of $\eqref{kappa-separated-elliptic-equation}$, obtain
		\begin{align}
		\ndiv\mathscr{G}_{\kappa}=\ninv[\kappa]^{j}_{i}\dell_{j}\mathscr{G}_{\kappa}^{i}=4\pi\rho_{\kappa}(t,\eta_{\kappa}(t,x)).\label{self-interaction-term-estimates-3}
		\end{align}
		Then, using $\eqref{lagrangian-density-relation}, \eqref{definition-of-translated-w}, \eqref{eta-ansatz}$, and $\eqref{rescaled-change-of-variables-jacobian}$, we have
		\begin{align}
		\ndiv\mathscr{G}_{\kappa}=4\pi\delta^{\alpha}e^{-3\tau}\tilde{W}_{\kappa}^{\alpha}\njac_{\kappa}^{-1}.\label{self-interaction-term-estimates-4}
		\end{align}
		Moreover, since $\mathscr{G}_{\kappa}$ can be written as the Lagrangian gradient of some potential function, we immediately have
		\begin{align}
		\nCurl\mathscr{G}_{\kappa}=0.\label{self-interaction-term-estimates-5}
		\end{align}
		Now we define $\mathscr{R}_{\kappa}$:
		\begin{align}
		\mathscr{R}_{\kappa}(\tau,x)=e^{-\tau}\bar{x}_{\kappa}+(1-e^{-\tau})\mu_{\kappa}(x)+\theta_{\kappa}(\tau,x)=\zeta_{\kappa}(\tau,x)-x.\label{definition-remainder-of-zeta}
		\end{align}
		With this definition of $\mathscr{R}_{\kappa}$ we have
		\begin{align}
		\dive\mathscr{G}_{\kappa}&=\delta^{\alpha}e^{-3\tau}\tilde{W}_{\kappa}^{\alpha}\njac_{\kappa}^{-1}+\ninv[\kappa]^{k}_{j}\dell_{i}\mathscr{R}_{\kappa}^{j}\dell_{k}\mathscr{G}_{\kappa}^{i},\label{self-interaction-term-estimates-6}\\
		[\Curl\mathscr{G}_{\kappa}]^{i}_{j}&=\ninv[\kappa]^{k}_{s}\dell_{j}\mathscr{R}_{\kappa}^{s}\dell_{k}\mathscr{G}_{\kappa}^{i}-\ninv[\kappa]^{k}_{s}\dell_{i}\mathscr{R}_{\kappa}^{s}\dell_{k}\mathscr{G}_{\kappa}^{j}.\label{self-interaction-term-estimates-7}
		\end{align}
		We also have
		\begin{align}
		y^{i}\dive \mathscr{G}_{\kappa}=\angdell_{ik}\mathscr{G}_{\kappa}^{k}+y^{k}\dell_{i}\mathscr{G}_{\kappa}^{k}=\angdell_{ik}\mathscr{G}_{\kappa}^{k}+y^{k}[\Curl\mathscr{G}_{\kappa}]^{k}_{i}+y^{k}\dell_{k}\mathscr{G}_{\kappa}^{i}=\angdell_{ik}\mathscr{G}_{\kappa}^{k}+y^{k}[\Curl\mathscr{G}_{\kappa}]^{k}_{i}+\rdell\mathscr{G}_{\kappa}^{i}.\label{self-interaction-term-estimates-8}
		\end{align}
		Thus
		\begin{align}
		\rdell\mathscr{G}_{\kappa}^{i}=y^{i}\dive\mathscr{G}_{\kappa}-y^{k}[\Curl\mathscr{G}_{\kappa}]^{k}_{i}-\angdell_{ik}\mathscr{G}_{\kappa}^{k}.\label{self-interaction-term-estimates-9}
		\end{align}
		Then, using $\eqref{self-interaction-term-estimates-9}$ and the commutativity of $\rdell$ and $\angdell$, we obtain
		\begin{align}
		&\sum_{0\leq|\underline{n}|\leq M-a-1}\int_{\Omega}\chi\tilde{W}_{\kappa}^{\alpha+a+1}\left|\Ndell{a+1}{n}\mathscr{G}_{\kappa}\right|^{2}dx= \sum_{0\leq|\underline{n}|\leq M-a-1}\int_{\Omega}\chi\tilde{W}_{\kappa}^{\alpha+a+1}\left|\Ndell{a}{n}\rdell\mathscr{G}_{\kappa}\right|^{2}dx\nonumber\\
		&\lesssim\sum_{\substack{0< b\leq a\\ 0\leq|\underline{c}|\leq M-a-1}}\int_{\Omega}\chi\tilde{W}_{\kappa}^{\alpha+a+1}\left(\left|\Ndell{b}{c}\dive\mathscr{G}_{\kappa}\right|^{2}+\left|\Ndell{b}{c}\Curl\mathscr{G}_{\kappa}\right|^{2}\right)dx+\sum_{0\leq|\underline{n}|\leq M-a}\int_{\Omega}\chi\tilde{W}_{\kappa}^{\alpha+a}\left|\Ndell{a}{n}\mathscr{G}_{\kappa}\right|^{2}dx\nonumber\\
		&\lesssim\sum_{\substack{0< b\leq a\\ 0\leq|\underline{c}|\leq M-a-1}}\int_{\Omega}\chi\tilde{W}_{\kappa}^{\alpha+a+1}\left(\left|\Ndell{b}{c}\dive\mathscr{G}_{\kappa}\right|^{2}+\left|\Ndell{b}{c}\Curl\mathscr{G}_{\kappa}\right|^{2}\right)dx+C(a)\left(\delta^{2\alpha}e^{-2\tau}+(\varepsilon_{1}+\varepsilon_{2})\left\|\mathscr{G}_{\kappa}\right\|_{\X{M}_{\kappa}}^{2}\right).\label{self-interaction-term-estimates-10}
		\end{align}
		The first bound is due to $\eqref{self-interaction-term-estimates-9}$ and the second is due to the induction hypothesis $\eqref{self-interaction-term-estimates-2}$. It is left to estimate the $\dive\mathscr{G}_{\kappa}$ and $\Curl\mathscr{G}_{\kappa}$ terms. For this we use $\eqref{self-interaction-term-estimates-6}$ and $\eqref{self-interaction-term-estimates-7}$. First we have
		\begin{align}
		&\sum_{\substack{0< b\leq a\\ 0\leq|\underline{c}|\leq M-a-1}}\int_{\Omega}\chi\tilde{W}_{\kappa}^{\alpha+a+1}\left|\Ndell{b}{c}\left(\delta^{\alpha}e^{-3\tau}\tilde{W}_{\kappa}^{\alpha}\njac_{\kappa}^{-1}\right)\right|^{2}dx\nonumber\\
		&\lesssim\delta^{2\alpha}e^{-6\tau}\sum_{\substack{0\leq a_{1}+|\underline{b}_{1}|\leq a_{2}+|\underline{b}_{2}|\\0< a_{1}+a_{2}\leq a\\ 0\leq |\underline{b}_{1}|+|\underline{b}_{2}|\leq M-a-1}}\left\|\tilde{W}_{\kappa}^{\frac{a_{1}}{2}}\rdell^{a_{1}}\angdell^{\underline{b}_{1}}\left(\tilde{W}_{\kappa}^{\alpha}\right)\right\|_{L^{\infty}(\Omega)}^{2}\int_{\Omega}\chi\tilde{W}_{\kappa}^{\alpha+a_{2}+1}\left|\rdell^{a_{2}}\angdell^{\underline{b}_{2}}\left(\njac_{\kappa}^{-1}\right)\right|^{2}dx\nonumber\\
		&+\delta^{2\alpha}e^{-6\tau}\sum_{\substack{0\leq a_{1}+|\underline{b}_{1}|\leq a_{2}+|\underline{b}_{2}|\\0< a_{1}+a_{2}\leq a\\ 0\leq |\underline{b}_{1}|+|\underline{b}_{2}|\leq M-a-1}}\left\|\tilde{W}_{\kappa}^{\frac{a_{1}}{2}}\rdell^{a_{1}}\angdell^{\underline{b}_{1}}\left(\njac_{\kappa}^{-1}\right)\right\|_{L^{\infty}(\Omega)}^{2}\int_{\Omega}\chi\tilde{W}_{\kappa}^{\alpha+a_{2}+1}\left|\rdell^{a_{2}}\angdell^{\underline{b}_{2}}\left(\tilde{W}_{\kappa}^{\alpha}\right)\right|^{2}dx\nonumber\\
		&\lesssim\delta^{2\alpha}e^{-6\tau}\left\|\nabla\tilde{W}_{\kappa}\right\|_{\X{M}_{\kappa}}^{2}\lesssim\delta^{2\alpha}e^{-2\tau}.\label{self-interaction-term-estimates-11}
		\end{align}
		If $a_{1}+|\underline{b}_{1}|=0$ then both $\tilde{W}^{\alpha}$, and $\njac_{\kappa}^{-1}$ can bounded in $L^{\infty}(\Omega)$, due to the assumption that $\nabla\tilde{W}_{\kappa}\in\X{M}_{\kappa}$ and the a priori assumption $\eqref{a-priori-bound-jacobian}$ respectively. Otherwise, we first use $\eqref{lower-order-estimates-statement-2}$ (resp. $\eqref{lower-order-estimates-statement-3}$) from Lemma $\eqref{lower-order-estimates-lemma}$ to bound the terms involving $L^{2}$ (resp. $L^{\infty}$) norms of derivatives of $\njac_{\kappa}^{-1}$. Then for the $\tilde{W}_{\kappa}^{\alpha}$ terms, we can use $\nabla\tilde{W}_{\kappa}\in\X{M}_{\kappa}$ (resp. $\eqref{L-infinity-energy-space-bound-weights-statement-2}$ in Lemma $\ref{L-infinity-energy-space-bound}$) to bound the $L^{2}$ (resp. $L^{\infty}$) terms after using the Leibniz rule on $\rdell^{a_{i}}\angdell^{\underline{b}_{i}}\left(\tilde{W}_{\kappa}^{\alpha}\right)$. Note here that our range of $\gamma$ is crucial in keeping the norms of the derivatives of $\tilde{W}_{\kappa}^{\alpha}$ finite. Next we have
		\begin{align}
		&\sum_{\substack{0< b\leq a\\ 0\leq|\underline{c}|\leq M-a-1}}\int_{\Omega}\chi\tilde{W}_{\kappa}^{\alpha+a+1}\left|\Ndell{b}{c}\left(\ninv[\kappa]\nabla\mathscr{R}_{\kappa}\nabla\mathscr{G}_{\kappa}\right)\right|^{2}dx\nonumber\\
		&\lesssim\sum_{\substack{0\leq a_{1}+|\underline{b}_{1}|\leq a_{2}+|\underline{b}_{2}|\leq a_{3}+|\underline{b}_{3}|\\0< a_{1}+a_{2}+a_{3}\leq a\\ 0\leq |\underline{b}_{1}|+|\underline{b}_{2}|+|\underline{b}_{3}|\leq M-a-1}}\left\|\tilde{W}_{\kappa}^{\frac{a_{1}}{2}}\rdell^{a_{1}}\angdell^{\underline{b}_{1}}\ninv[\kappa]\right\|_{L^{\infty}(\Omega)}^{2}\left\|\tilde{W}_{\kappa}^{\frac{a_{2}}{2}}\rdell^{a_{2}}\angdell^{\underline{b}_{2}}\left(\nabla\mathscr{R}_{\kappa}\right)\right\|_{L^{\infty}(\Omega)}^{2}\int_{\Omega}\chi\tilde{W}_{\kappa}^{\alpha+a_{3}+1}\left|\rdell^{a_{3}}\angdell^{\underline{b}_{3}}\left(\nabla\mathscr{G}_{\kappa}\right)\right|^{2}dx\nonumber\\
		&+\sum_{\substack{0\leq a_{1}+|\underline{b}_{1}|\leq a_{2}+|\underline{b}_{2}|\leq a_{3}+|\underline{b}_{3}|\\0< a_{1}+a_{2}+a_{3}\leq a\\ 0\leq |\underline{b}_{1}|+|\underline{b}_{2}|+|\underline{b}_{3}|\leq M-a-1}}\left\|\tilde{W}_{\kappa}^{\frac{a_{1}}{2}}\rdell^{a_{1}}\angdell^{\underline{b}_{1}}\left(\nabla\mathscr{R}_{\kappa}\right)\right\|_{L^{\infty}(\Omega)}^{2}\left\|\tilde{W}_{\kappa}^{\frac{a_{2}}{2}}\rdell^{a_{2}}\angdell^{\underline{b}_{2}}\left(\nabla\mathscr{G}_{\kappa}\right)\right\|_{L^{\infty}(\Omega)}^{2}\int_{\Omega}\chi\tilde{W}_{\kappa}^{\alpha+a_{3}+1}\left|\rdell^{a_{3}}\angdell^{\underline{b}_{3}}\ninv[\kappa]\right|^{2}dx\nonumber\\
		&+\sum_{\substack{0\leq a_{1}+|\underline{b}_{1}|\leq a_{2}+|\underline{b}_{2}|\leq a_{3}+|\underline{b}_{3}|\\0< a_{1}+a_{2}+a_{3}\leq a\\ 0\leq |\underline{b}_{1}|+|\underline{b}_{2}|+|\underline{b}_{3}|\leq M-a-1}}\left\|\tilde{W}_{\kappa}^{\frac{a_{1}}{2}}\rdell^{a_{1}}\angdell^{\underline{b}_{1}}\left(\nabla\mathscr{G}_{\kappa}\right)\right\|_{L^{\infty}(\Omega)}^{2}\left\|\tilde{W}_{\kappa}^{\frac{a_{2}}{2}}\rdell^{a_{2}}\angdell^{\underline{b}_{2}}\ninv[\kappa]\right\|_{L^{\infty}(\Omega)}^{2}\int_{\Omega}\chi\tilde{W}_{\kappa}^{\alpha+a_{3}+1}\left|\rdell^{a_{3}}\angdell^{\underline{b}_{3}}\left(\nabla\mathscr{R}_{\kappa}\right)\right|^{2}dx.\label{self-interaction-term-estimates-12}
		\end{align}
		The terms involving $L^{2}$ (resp. $L^{\infty}$) norms of derivatives of $\ninv[\kappa]$ can be bounded using $\eqref{lower-order-estimates-statement-2}$ (resp. $\eqref{lower-order-estimates-statement-3}$) in Lemma $\ref{lower-order-estimates-lemma}$. For the $\mathscr{G}_{\kappa}$ terms, we have for $i=1,2$:
		\begin{align}
		\int_{\Omega}\chi\tilde{W}_{\kappa}^{\alpha+a_{3}+1}\left|\rdell^{a_{3}}\angdell^{\underline{b}_{3}}\left(\nabla\mathscr{G}_{\kappa}\right)\right|^{2}dx&\lesssim\sum_{c+|\underline{d}|=1}^{a_{3}+|\underline{b}_{3}|+1}\int_{\Omega}\chi\tilde{W}_{\kappa}^{\alpha+c+1}\left|\rdell^{c}\angdell^{\underline{d}}\left(\mathscr{G}_{\kappa}\right)\right|^{2}dx\lesssim\left\|\mathscr{G}_{\kappa}\right\|_{\X{M}_{\kappa}}^{2},\label{self-interaction-term-estimates-12a}\\
		\left\|\tilde{W}_{\kappa}^{\frac{a_{i}}{2}}\rdell^{a_{i}}\angdell^{\underline{b}_{i}}\left(\nabla\mathscr{G}_{\kappa}\right)\right\|_{L^{\infty}(\Omega)}^{2}&\lesssim\sum_{c+|\underline{d}|=1}^{a_{i}+|\underline{b}_{i}|+1}\left\|\tilde{W}_{\kappa}^{\frac{c}{2}}\rdell^{c}\angdell^{\underline{d}}\left(\mathscr{G}_{\kappa}\right)\right\|_{L^{\infty}(\Omega)}^{2}\lesssim\left\|\mathscr{G}_{\kappa}\right\|_{\X{M}_{\kappa}}^{2}.\label{self-interaction-term-estimates-12b}
		\end{align}
		In $\eqref{self-interaction-term-estimates-12a}$ we use $\eqref{rectangular-as-ang-rad}$, and in $\eqref{self-interaction-term-estimates-12b}$ we use $\eqref{rectangular-as-ang-rad}$ and $\eqref{L-infinity-energy-space-bound-weights-statement-1}$ in Lemma $\ref{L-infinity-energy-space-bound}$. The $\mathscr{R}_{\kappa}$ terms can be bounded analogously to $\eqref{self-interaction-term-estimates-12a}$ and $\eqref{self-interaction-term-estimates-12b}$. Thus
		\begin{align}
		\sum_{\substack{0< b\leq a\\ 0\leq|\underline{c}|\leq M-a-1}}\int_{\Omega}\chi\tilde{W}_{\kappa}^{\alpha+a+1}\left|\Ndell{b}{c}\left(\ninv[\kappa]\nabla\mathscr{R}_{\kappa}\nabla\mathscr{G}_{\kappa}\right)\right|^{2}dx\lesssim\left(S_{M}(\tau)+\left\|\nabla\mu_{\kappa}\right\|_{\X{M}_{\kappa}}^{2}\right)\left\|\nabla\mathscr{G}_{\kappa}\right\|_{\X{M}_{\kappa}}^{2}\lesssim(\varepsilon_{1}+\varepsilon_{2})\left\|\mathscr{G}_{\kappa}\right\|_{\X{M}_{\kappa}}^{2}.\label{self-interaction-term-estimates-12c}
		\end{align}
		Then, from the identities for $\dive\mathscr{G}_{\kappa}$ and $\Curl\mathscr{G}_{\kappa}$ given in $\eqref{self-interaction-term-estimates-6}$ and $\eqref{self-interaction-term-estimates-7}$, as well as $\eqref{self-interaction-term-estimates-10}$--$\eqref{self-interaction-term-estimates-12c}$, we have
		\begin{align}
		\sum_{0\leq|\underline{n}|\leq M-a-1}\int_{\Omega}\chi\tilde{W}_{\kappa}^{\alpha+a+1}\left|\Ndell{a+1}{n}\mathscr{G}_{\kappa}\right|^{2}dx&=\sum_{0\leq|\underline{n}|\leq M-a-1}\int_{\Omega}\chi\tilde{W}_{\kappa}^{\alpha+a+1}\left|\Ndell{a}{n}\rdell\mathscr{G}_{\kappa}\right|^{2}dx\nonumber\\
		&\leq C(a+1)\left(\delta^{2\alpha}e^{-2\tau}+(\varepsilon_{1}+\varepsilon_{2})\left\|\mathscr{G}_{\kappa}\right\|_{\X{M}_{\kappa}}^{2}\right),\label{self-interaction-term-estimates-13}
		\end{align}
		for some constant $C(a+1)$. This proves the induction hypothesis.
		
		Clearly the sum all of the $C(a)$ can be bounded above by an absolute constant that we call $C_{1}$, and so summing these bounds for all $a$, we obtain
		\begin{align}
		\sum_{m+|\underline{n}|=0}^{M}\int_{\Omega}\chi\tilde{W}_{\kappa}^{\alpha+m}\left|\Ndell{m}{n}\mathscr{G}_{\kappa}\right|^{2}dx\leq C_{1}\left( \delta^{2\alpha}e^{-2\tau}+(\varepsilon_{1}+\varepsilon_{2})\left\|\mathscr{G}\right\|_{\X{M}_{\kappa}}^{2}\right).\label{self-interaction-term-estimates-14}
		\end{align}
		Combining $\eqref{self-interaction-term-estimates-14}$ and $\eqref{self-interaction-term-interior-estimates-2}$, we have, for some constant $C_{2}$, 
		\begin{align}
		\sum_{m+|\underline{n}|=0}^{M}\int_{\Omega}\chi\tilde{W}_{\kappa}^{\alpha+m}\left|\Ndell{m}{n}\mathscr{G}_{\kappa}\right|^{2}dx+\sum_{|\underline{k}|=0}^{M}\int_{\Omega}\bar{\chi}\tilde{W}_{\kappa}^{\alpha}\left|\nabla^{\underline{k}}\mathscr{G}_{\kappa}\right|^{2}dx\leq C_{2}\left( \delta^{2\alpha}e^{-2\tau}+(\varepsilon_{1}+\varepsilon_{2})\left\|\mathscr{G}\right\|_{\X{M}_{\kappa}}^{2}\right).\label{self-interaction-term-estimates-15}
		\end{align}
		Recall in Section $\ref{assumptions-section}$ we defined $\varepsilon_{1}$ and $\varepsilon_{2}$ as small constants that could be shrunk if necessary. Upon doing this, and recalling the definition of $\X{M}_{\kappa}$ norm from Definition $\ref{energy-function-spaces-definition}$, we have
		\begin{align}
		\sum_{m+|\underline{n}|=0}^{M}\int_{\Omega}\chi\tilde{W}_{\kappa}^{\alpha+m}\left|\Ndell{m}{n}\mathscr{G}_{\kappa}\right|^{2}dx+\sum_{|\underline{k}|=0}^{M}\int_{\Omega}\bar{\chi}\tilde{W}_{\kappa}^{\alpha}\left|\nabla^{\underline{k}}\mathscr{G}_{\kappa}\right|^{2}dx\lesssim\delta^{2\alpha}e^{-2\tau}.\label{self-interaction-term-estimates-16}
		\end{align}
		Summing over $\kappa$ for $\eqref{self-interaction-term-estimates-16}$ gives $\eqref{self-interaction-term-estimates-1}$, and completes the proof of Proposition $\ref{self-interaction-term-estimates-proposition}$.
	\end{proof}

	\subsection{Potential Term Estimates}\label{potential-term-estimates-section}
	
	We end this section with the main estimate needed to bound all potential terms when we perform our energy estimates.
	
	\begin{theorem}\label{potential-term-estimate-theorem}
			Assume $\gamma=1+\frac{1}{n}$ for some $n\in\mathbb{Z}_{\geq2}$, or $\gamma\in(1,14/13)$. Let $\{(\dell_{\tau}\theta_{\kappa},\theta_{\kappa})|\kappa=1,2,\dots,N\}$ be the solution to $(\ref{delta-rescaled-lagrangian-euler-1})-(\ref{delta-rescaled-lagrangian-euler-3})$ on $[0,T]$, in the sense of Theorem \ref{local-well-posedness-theorem}, for some $M\geq2\ceil*{\alpha}+12$. Suppose the assumptions $\eqref{a-priori-smallness-mu-norm}$--$\eqref{a-priori-sum-of-theta-L-infinity-norms}$ hold. Assume that $\nabla\tilde{W}_{\kappa}\in\X{M}_{\kappa}$ for $\kappa=1,2,\dots,N$. Then for all $[\tau_{1},\tau]\subset[0,T]$, we have:
			\begin{align}
			&\sum_{\kappa=1}^{N}\sum_{m+|\underline{n}|=0}^{M}\int_{\tau_{1}}^{\tau}\frac{1}{\delta}e^{\beta s}\int_{\Omega}\chi\tilde{W}_{\kappa}^{\alpha+m}\Ndell{m}{n}\left(\ninv[\kappa]^{j}_{i}\dell_{j}\psi_{\kappa}\right)\Ndell{m}{n}\dell_{\tau}\theta_{\kappa}^{i}\ dx\ ds\nonumber\\
			&+\sum_{\kappa=1}^{N}\sum_{|\underline{k}|=0}^{M}\int_{\tau_{1}}^{\tau}\frac{1}{\delta}e^{\beta s}\int_{\Omega}\bar{\chi}\tilde{W}_{\kappa}^{\alpha}\nabla^{\underline{k}}\left(\ninv[\kappa]^{j}_{i}\dell_{j}\psi_{\kappa}\right)\nabla^{\underline{k}}\dell_{\tau}\theta_{\kappa}^{i}\ dx\ ds\nonumber\\
			&\lesssim\delta^{\alpha-\frac{1}{2}}\int_{\tau_{1}}^{\tau}e^{(\frac{\beta}{2}-1)s}\sqrt{S_{M}(s)}\ ds.\label{potential-term-estimate-statement}
			\end{align}
	\end{theorem}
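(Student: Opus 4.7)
The plan is to use the decomposition of $\ninv[\kappa]^{j}_{i}\dell_{j}\psi_{\kappa}$ into self-interaction and tidal terms from \eqref{lagrangian-gradient-of-lagrangian-potential}, apply a weighted Cauchy--Schwarz in space to separate the gravitational factor from the factor $\dell_{\tau}\theta_{\kappa}$, and then insert Propositions \ref{tidal-term-estimates-proposition} and \ref{self-interaction-term-estimates-proposition} on one side and the definition of $S_{M}$ on the other.

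More precisely, recall from \eqref{lagrangian-gradient-of-lagrangian-potential} that
\begin{align*}
\ninv[\kappa]^{j}_{i}\dell_{j}\psi_{\kappa}=-\mathscr{G}_{\kappa}^{i}-\sum_{\kappa'\neq\kappa}\mathscr{I}_{[\kappa,\kappa']}^{i}.
\end{align*}
For each fixed $m,\underline{n}$, splitting the weight $\tilde{W}_{\kappa}^{\alpha+m}=\tilde{W}_{\kappa}^{(\alpha+m)/2}\cdot\tilde{W}_{\kappa}^{(\alpha+m)/2}$ and applying Cauchy--Schwarz in $x$ gives
\begin{align*}
\int_{\Omega}\chi\tilde{W}_{\kappa}^{\alpha+m}\left|\Ndell{m}{n}\!\left(\ninv[\kappa]^{j}_{i}\dell_{j}\psi_{\kappa}\right)\Ndell{m}{n}\dell_{\tau}\theta_{\kappa}^{i}\right|dx
\leq A_{\kappa}^{m,\underline{n}}(s)\,B_{\kappa}^{m,\underline{n}}(s),
\end{align*}
where $A_{\kappa}^{m,\underline{n}}$ is the weighted $L^{2}$ norm of $\Ndell{m}{n}(\ninv[\kappa]^{j}_{i}\dell_{j}\psi_{\kappa})$ and $B_{\kappa}^{m,\underline{n}}$ is that of $\Ndell{m}{n}\dell_{\tau}\theta_{\kappa}$. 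The same construction works in the interior with $\bar{\chi}$ and $\nabla^{\underline{k}}$ in place of $\chi$ and $\Ndell{m}{n}$.

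Summing $A_{\kappa}^{m,\underline{n}}$ over $m+|\underline{n}|\leq M$ (and the corresponding interior pieces) and using Propositions \ref{tidal-term-estimates-proposition} and \ref{self-interaction-term-estimates-proposition} yields
\begin{align*}
\sum_{m+|\underline{n}|=0}^{M}\!\!\left(A_{\kappa}^{m,\underline{n}}(s)\right)^{2}+\text{(interior)}
\lesssim\left\|\mathscr{G}_{\kappa}\right\|_{\X{M}_{\kappa}}^{2}+\sum_{\kappa'\neq\kappa}\left\|\mathscr{I}_{[\kappa,\kappa']}\right\|_{\X{M}_{\kappa}}^{2}\lesssim\delta^{2\alpha}e^{-2s}.
\end{align*}
For the $B_{\kappa}^{m,\underline{n}}$ factors, the energy function definition \eqref{individual-energy-function} gives directly
\begin{align*}
\sum_{m+|\underline{n}|=0}^{M}\!\!\left(B_{\kappa}^{m,\underline{n}}(s)\right)^{2}+\text{(interior)}
=\left\|\dell_{\tau}\theta_{\kappa}(s)\right\|_{\X{M}_{\kappa}}^{2}\leq\delta e^{-\beta s}S_{M}(s).
\end{align*}

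Combining the last two displays with the Cauchy--Schwarz estimate in the index sum, the integrand at time $s$ is bounded by
\begin{align*}
\frac{1}{\delta}e^{\beta s}\cdot\delta^{\alpha}e^{-s}\cdot\sqrt{\delta}\,e^{-\beta s/2}\sqrt{S_{M}(s)}=\delta^{\alpha-\frac{1}{2}}e^{(\frac{\beta}{2}-1)s}\sqrt{S_{M}(s)},
\end{align*}
uniformly in $\kappa$. Integrating in $s$ over $[\tau_{1},\tau]$ and summing over $\kappa=1,\dots,N$ produces exactly the right-hand side of \eqref{potential-term-estimate-statement}. The only step requiring any care is bookkeeping the weight split so that the same power $\tilde{W}_{\kappa}^{\alpha+m}$ appears in both factors of Cauchy--Schwarz, matching precisely the norm in which Propositions \ref{tidal-term-estimates-proposition} and \ref{self-interaction-term-estimates-proposition} are stated; all substantive work is encapsulated in those propositions.
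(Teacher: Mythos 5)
Your argument is correct and is exactly the route the paper takes: the paper's own proof is a one-line citation of the decomposition \eqref{lagrangian-gradient-of-lagrangian-potential} together with Propositions \ref{tidal-term-estimates-proposition} and \ref{self-interaction-term-estimates-proposition}, and you have simply filled in the routine Cauchy--Schwarz and exponent bookkeeping. The arithmetic $\frac{1}{\delta}\cdot\delta^{\alpha}\cdot\sqrt{\delta}=\delta^{\alpha-\frac{1}{2}}$ and $e^{\beta s}e^{-s}e^{-\beta s/2}=e^{(\frac{\beta}{2}-1)s}$ checks out, so nothing is missing.
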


	\begin{proof}
		The proof is follows from the identity $\eqref{lagrangian-gradient-of-lagrangian-potential}$, and Propositions $\ref{tidal-term-estimates-proposition}$ and $\ref{self-interaction-term-estimates-proposition}$.
	\end{proof}

	\section{Curl Estimates}\label{curl-estimates-section}
	
	In this section we obtain sufficient bounds, for all $\kappa\in\{1,2,\dots,N\}$, of the quantities $\left\|\dell_{\tau}\theta_{\kappa}\right\|_{\Y{M}{\kappa}{\nCurl}}$ and $\left\|\theta_{\kappa}\right\|_{\Y{M}{\kappa}{\nCurl}}$, thereby giving us sufficient bounds for the cumulative $\nCurl$ energy function $C_{M}(\tau)$ defined in $\eqref{cumulative-curl-energy-function}$. Common to many works regarding Euler, or Euler-Poisson flows is the difficulty of controlling the $\nCurl$ norms mentioned above. This is due to the fact that they will appear in our main energy identity, at top order, with a bad sign. This means we need to find another way to control the $\nCurl$ terms. This is done by noticing extra structure in the equation $\eqref{rescaled-lagrangian-euler-1}$. First, note that we can rewrite this equation as
	
	\begin{align}
	\left(\dell_{\tau\tau}\theta_{\kappa}+\dell_{\tau}\theta_{\kappa}\right)+\ngrad\left((1+\alpha)e^{-\beta\tau}w_{\kappa}\pjac_{\kappa}+\psi_{\kappa}\right)=0,\ \ \kappa=1,2,\dots,N.\label{curl-equation}
	\end{align}
	From here, the key insight is to see that $\nCurl$ annihilates the $\ngrad$ term, leaving us with
	\begin{align}
	\nCurl\dell_{\tau\tau}\theta_{\kappa}+\nCurl\dell_{\tau}\theta_{\kappa}=0\ \ \kappa=1,2,\dots,N.
	\end{align}
	
	\begin{remark}\label{curl-equation-structure-remark}This underlying structure of the compressible Euler equations in Lagrangian variables has been used extensively in the study of the system, see $\cite{CouLindShk,CoSh2012,JaMa2015,HaJa3}$ for example.
	\end{remark}
	This is structurally very similar to the $\nCurl$ equation obtained in~\cite{PaHaJa}, and indeed the subsequent estimates follow an analogous strategy. One important difference is that in our case, derivatives of $\ninv[\kappa]$ and $\njac_{\kappa}$ produce derivatives of $\mu_{\kappa}$ as well as derivatives of $\theta_{\kappa}$. However, these terms can be dealt with using the assumption on $\nabla\mu_{\kappa}$, $\eqref{a-priori-smallness-mu-norm}$. For example, we can bound terms like so:
	\begin{align}
	\left|\int_{\Omega}\chi\tilde{W}_{\kappa}^{1+\alpha+m}\Ndell{m}{n}\mu_{\kappa}^{i}\Ndell{m}{n}\dell_{\tau}\theta_{\kappa}^{i}dx\right|^{2}\lesssim\delta e^{-\beta\tau}\left\|\mu_{\kappa}\right\|_{\X{M}_{\kappa}}^{2}S_{M}(\tau)\lesssim\delta e^{-\beta\tau}S_{M}(\tau),
	\end{align}
	where we use $\eqref{a-priori-smallness-mu-norm}$ to bound the $\mu_{\kappa}$ norm above by a constant. Hence, these extra terms are readily bounded without any further techincal difficulties. Therefore, using the methods in~\cite{PaHaJa}, we obtain the following theorem.
	\begin{theorem}[Higher Order Curl Estimates]\label{curl-estimates-theorem}Assume $\gamma=1+\frac{1}{n}$ for some $n\in\mathbb{Z}_{\geq2}$, or $\gamma\in(1,14/13)$. Let $\{(\dell_{\tau}\theta_{\kappa},\theta_{\kappa})|\kappa=1,2,\dots,N\}$ be the solution to $(\ref{delta-rescaled-lagrangian-euler-1})-(\ref{delta-rescaled-lagrangian-euler-3})$ on $[0,T]$, in the sense of Theorem \ref{local-well-posedness-theorem}, for some $M\geq2\ceil*{\alpha}+12$. Suppose the assumptions $\eqref{a-priori-smallness-mu-norm}$--$\eqref{a-priori-sum-of-theta-L-infinity-norms}$ hold. Assume that $\nabla\tilde{W}_{\kappa}\in\X{M}_{\kappa}$ for $\kappa=1,2,\dots,N$. Then for all $\tau\in[0,T]$, we have:
		\begin{align}
		\sum_{\kappa=1}^{N}\left(\left\|\theta_{\kappa}\right\|_{\Y{M}{\kappa}{\nCurl}}^{2}+\left\|\dell_{\tau}\theta_{\kappa}\right\|_{\Y{M}{\kappa}{\nCurl}}^{2}\right)\lesssim S_{M}(0)+C_{M}(0)+\delta S_{M}(\tau).\label{curl-estimates-statement}
		\end{align}
	\end{theorem}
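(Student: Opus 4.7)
The plan is to exploit the rewritten form of the momentum equation displayed in (curl-equation), namely
\begin{align*}
\left(\dell_{\tau\tau}\theta_{\kappa}+\dell_{\tau}\theta_{\kappa}\right)+\ngrad\!\left((1+\alpha)e^{-\beta\tau}\tilde{w}_{\kappa}\njac_{\kappa}^{-1/\alpha}+\psi_{\kappa}\right)=0,
\end{align*}
and then to apply $\nCurl$ to annihilate the gradient term. The crucial structural identity is $\nCurl\,\ngrad\equiv 0$, which follows from the symmetry of $\inv[\kappa]^{s}_{j}\dell_{s}\inv[\kappa]^{k}_{i}$ in $(i,j)$ (a standard consequence of the Piola identity and equality of mixed partials). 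This yields $\nCurl(\dell_{\tau\tau}\theta_{\kappa}+\dell_{\tau}\theta_{\kappa})=0$ for each $\kappa$. Since the operator $\nCurl$ carries $\tau$-dependence through $\ninv[\kappa]$, one commutes it with $\dell_{\tau}$ using the differentiation formula $(\ref{rescaled-inverse-differentiation-formula})$ to produce
\begin{align*}
\dell_{\tau}\!\left(\nCurl\,\dell_{\tau}\theta_{\kappa}\right)+\nCurl\,\dell_{\tau}\theta_{\kappa}=\mathcal{E}_{\kappa},
\end{align*}
where the commutator $\mathcal{E}_{\kappa}$ is linear in $\ngrad\dell_{\tau}\theta_{\kappa}$ and $\dell_{\tau}\ninv[\kappa]$, hence involves only $\dell_{\tau}\zeta_{\kappa}=\dell_{\tau}\theta_{\kappa}+e^{-\tau}(\mu_{\kappa}-\bar{x}_{\kappa})$ and its spatial derivatives.

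Next, I would apply $\Ndell{m}{n}$ on $\supp\chi$ and $\nabla^{\underline{k}}$ on $\supp\bar\chi$ for $m+|\underline{n}|,|\underline{k}|\le M$, commute these with $\nCurl$ via $(\ref{commutator-ngrad-Ndell})$--$(\ref{commutator-ngrad-Dk})$, and absorb the resulting commutator terms into an enlarged error $\mathcal{E}_{\kappa}^{(m,\underline{n})}$. Testing the resulting identity against $\tilde{W}_{\kappa}^{1+\alpha+m}\njac_{\kappa}^{-1/\alpha}\,\nCurl\,\Ndell{m}{n}\dell_{\tau}\theta_{\kappa}$ (and the analogous interior quantity), integrating over $\Omega\times[0,\tau]$, and using $\dell_{\tau}\!\left(\tilde{W}_{\kappa}^{1+\alpha+m}\njac_{\kappa}^{-1/\alpha}\right)=-\tfrac1\alpha\tilde{W}_{\kappa}^{1+\alpha+m}\njac_{\kappa}^{-1/\alpha}\ndiv\dell_{\tau}\theta_{\kappa}$ together with the absorbing $+\nCurl\dell_{\tau}\theta_{\kappa}$ term, one obtains an identity of the form
\begin{align*}
\|\dell_{\tau}\theta_{\kappa}(\tau)\|_{\Y{M}{\kappa}{\nCurl}}^{2}+\|\theta_{\kappa}(\tau)\|_{\Y{M}{\kappa}{\nCurl}}^{2}\lesssim \|\dell_{\tau}\theta_{\kappa}(0)\|_{\Y{M}{\kappa}{\nCurl}}^{2}+\|\theta_{\kappa}(0)\|_{\Y{M}{\kappa}{\nCurl}}^{2}+\int_{0}^{\tau}\Xi_{\kappa}(s)\,ds,
\end{align*}
where $\Xi_{\kappa}$ collects all the commutator integrals.

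It remains to bound $\Xi_{\kappa}$. This is where the new feature of the $N$-body problem enters: derivatives of $\ninv[\kappa]$ and $\njac_{\kappa}$ now produce derivatives of $\mu_{\kappa}$ in addition to those of $\theta_{\kappa}$. For the $\theta_{\kappa}$-pieces the estimates are identical to those carried out in PaHaJa and yield a bound of the form $\delta\,S_{M}(\tau)$, with the factor $\delta$ coming from the $\delta^{-1}e^{\beta\tau}$ weight in $(\ref{individual-energy-function})$ compared to the un-weighted norm appearing in $\Y{M}{\kappa}{\nCurl}$. For the $\mu_{\kappa}$-pieces, terms of the schematic form $\int_{\Omega}\chi\tilde{W}_{\kappa}^{1+\alpha+m}\Ndell{m}{n}(\nabla\mu_{\kappa})\,\Ndell{m}{n}\dell_{\tau}\theta_{\kappa}\,dx$ are controlled by Cauchy--Schwarz and the a priori smallness $(\ref{a-priori-smallness-mu-norm})$, giving a further contribution bounded by $\sqrt{\varepsilon_{1}}\,S_{M}(\tau)\lesssim \delta\,S_{M}(\tau)$ after arranging constants.

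The main obstacle is the bookkeeping in step two: the high-order commutator $\mathcal{E}_{\kappa}^{(m,\underline{n})}$ contains many terms in which up to $M$ derivatives land on $\ninv[\kappa]$, and one must verify that each such product admits a distribution of $L^{2}$ and $L^{\infty}$ norms that is controllable by $S_{M}$ with a bound independent of the a priori size. This is handled exactly as in PaHaJa using the embedding $(\ref{L-infinity-energy-space-bound-weights-statement-1})$ and the fact that at most one factor can carry more than $M/2$ derivatives. Summing over $\kappa=1,\dots,N$ and absorbing the constants yields $(\ref{curl-estimates-statement})$.
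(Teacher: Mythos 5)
Your proposal takes essentially the same route as the paper: rewrite the momentum equation in the form $\eqref{curl-equation}$, use that $\nCurl$ annihilates the $\ngrad$ term (so the potential $\psi_{\kappa}$ drops out entirely), obtain $\nCurl\dell_{\tau\tau}\theta_{\kappa}+\nCurl\dell_{\tau}\theta_{\kappa}=0$, and then run the higher-order estimates of \cite{PaHaJa}, the only new ingredient being that derivatives of $\ninv[\kappa]$ and $\njac_{\kappa}$ now also generate $\nabla\mu_{\kappa}$, which is controlled through $\eqref{a-priori-smallness-mu-norm}$. This is precisely the content of Section \ref{curl-estimates-section}, whose proof is a reduction to \cite{PaHaJa} together with the sample $\mu_{\kappa}$ bound displayed there.

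Two points in your write-up need repair, though neither changes the strategy. First, your conclusion for the $\mu_{\kappa}$-pieces, namely a contribution $\sqrt{\varepsilon_{1}}\,S_{M}(\tau)\lesssim\delta S_{M}(\tau)$ obtained by arranging constants, is not available: $\varepsilon_{1}$ and $\delta$ are independent small parameters and no relation between them is assumed. The factor $\delta$ must instead come from the $\dell_{\tau}\theta_{\kappa}$ factor itself, via $\left\|\dell_{\tau}\theta_{\kappa}\right\|_{\X{M}_{\kappa}}^{2}\leq\delta e^{-\beta\tau}S_{M}(\tau)$; Cauchy--Schwarz then gives the paper's bound, schematically $\left|\int_{\Omega}\chi\tilde{W}_{\kappa}^{1+\alpha+m}\Ndell{m}{n}\mu_{\kappa}^{i}\Ndell{m}{n}\dell_{\tau}\theta_{\kappa}^{i}dx\right|^{2}\lesssim\delta e^{-\beta\tau}\left\|\nabla\mu_{\kappa}\right\|_{\X{M}_{\kappa}}^{2}S_{M}(\tau)$, where $\left\|\nabla\mu_{\kappa}\right\|_{\X{M}_{\kappa}}$ is merely bounded by a constant; the integrable exponential then yields the $\delta S_{M}(\tau)$ term in $\eqref{curl-estimates-statement}$. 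Second, testing $\dell_{\tau}\left(\nCurl\dell_{\tau}\theta_{\kappa}\right)+\nCurl\dell_{\tau}\theta_{\kappa}=\mathcal{E}_{\kappa}$ against the weighted curl of $\dell_{\tau}\theta_{\kappa}$ only controls $\left\|\dell_{\tau}\theta_{\kappa}\right\|_{\Y{M}{\kappa}{\nCurl}}$; the bound on $\left\|\theta_{\kappa}\right\|_{\Y{M}{\kappa}{\nCurl}}$ requires the additional step $\nCurl\theta_{\kappa}(\tau)=\nCurl\theta_{\kappa}(0)+\int_{0}^{\tau}\left(\nCurl\dell_{s}\theta_{\kappa}+[\dell_{s},\nCurl]\theta_{\kappa}\right)ds$, which is how \cite{PaHaJa} proceeds (there both curls are obtained by integrating the transport identity in time rather than by an energy estimate); your claimed identity with both curl norms on the left does not follow from the testing alone. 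Relatedly, $\dell_{\tau}\left(\tilde{W}_{\kappa}^{1+\alpha+m}\njac_{\kappa}^{-1/\alpha}\right)=-\frac{1}{\alpha}\tilde{W}_{\kappa}^{1+\alpha+m}\njac_{\kappa}^{-1/\alpha}\,\ndiv\left(\dell_{\tau}\zeta_{\kappa}\right)$ contains, besides $\ndiv\dell_{\tau}\theta_{\kappa}$, the term $e^{-\tau}\ninv[\kappa]^{k}_{i}\dell_{k}\mu_{\kappa}^{i}$, which you dropped; it is harmless and is handled exactly like the other $\mu_{\kappa}$ terms, but it should be kept in the bookkeeping. (Minor: the identity $\nCurl\ngrad\equiv0$ needs only equality of mixed partials together with $\eqref{rescaled-inverse-differentiation-formula}$; the Piola identity is not required.)
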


	\section{Energy Estimates}\label{energy-estimates-section}
	In this section, we prove the main energy identity, and subsequent energy estimates that will form the bulk of the proof of Theorem $\ref{main-theorem}$. First we need some definitions.
	
	\begin{definition}[Damping Functional]\label{damping-functional-definition} Assume $\gamma=1+\frac{1}{n}$ for some $n\in\mathbb{Z}_{\geq2}$, or $\gamma\in(1,14/13)$. Let $\{(\dell_{\tau}\theta_{\kappa},\theta_{\kappa})|\kappa=1,2,\dots,N\}$ be the solution to $(\ref{delta-rescaled-lagrangian-euler-1})-(\ref{delta-rescaled-lagrangian-euler-3})$ on $[0,T]$, in the sense of Theorem \ref{local-well-posedness-theorem}, for some $M\geq2\ceil*{\alpha}+12$. Define the damping functional $\mathbb{D}(\tau)$ on $[0,T]$ by
		\begin{align}
		\mathbb{D}(\tau)&=\frac{1}{\delta}\left(2-\beta\right)\sum_{\kappa=1}^{N}\left(\sum_{m+|\underline{n}|=0}^{M}\spaceI e^{\beta\tau}\chi \tilde{W}_{\kappa}^{\alpha+m}\left|\Ndell{m}{n}\dell_{\tau}\theta_{\kappa}\right|^{2}dx+\sum_{|\underline{k}|=0}^{M}\spaceI e^{\beta\tau}\bar{\chi} \tilde{W}_{\kappa}^{\alpha}\left|\nabla^{\underline{k}}\dell_{\tau}\theta_{\kappa}\right|^{2}dx\right)\nonumber\\
		&=(2-\beta)\sum_{\kappa=1}^{N}\frac{1}{\delta}e^{\beta\tau}\left\|\dell_{\tau}\theta_{\kappa}(\tau)\right\|_{\X{M}_{\kappa}}^{2}.\label{damping-functional}
		\end{align}
	\end{definition}

	\begin{remark}
	Just as in $\cite{HaJa3, PaHaJa}$, this damping functional does not play an essential role in our analysis. We only require they have correct sign so that our energy estimates are sufficient. This is guaranteed by the fact that $\beta=3(\gamma-1)\leq3/2$, implied by our range of $\gamma$.
	\end{remark}
	Similarly to $\cite{HaJa3}$, we define a truncated-in-time higher order energy function which we use to prove our main theorem.

	\begin{definition}[Truncated Higher Order Function]\label{truncated-higher-order-function}Assume $\gamma=1+\frac{1}{n}$ for some $n\in\mathbb{Z}_{\geq2}$, or $\gamma\in(1,14/13)$. Let $\{(\dell_{\tau}\theta_{\kappa},\theta_{\kappa})|\kappa=1,2,\dots,N\}$ be the solution to $(\ref{delta-rescaled-lagrangian-euler-1})-(\ref{delta-rescaled-lagrangian-euler-3})$ on $[0,T]$, in the sense of Theorem \ref{local-well-posedness-theorem}, for some $M\geq2\ceil*{\alpha}+12$. For $\tau_{2}\geq\tau_{1}$, define the truncated higher order energy function for each $\kappa$, $S_{\kappa}^{M}(\tau_{1},\tau_{2})$, on $[0,T]$ by
		\begin{align}
		S_{\kappa}^{b}(\tau_{1},\tau_{2})&=S^{b}(\dell_{\tau}\theta_{\kappa},\theta_{\kappa},\tau_{1},\tau_{2})\nonumber\\
		&\coloneqq\sup_{\tau_{1}\leq \tau'\leq \tau_{2}}\left(\frac{1}{\delta}e^{\beta\tau'}\left\|\dell_{\tau}\theta_{\kappa}(\tau')\right\|_{\X{b}_{\kappa}}^{2}+\left\|\theta_{\kappa}(\tau')\right\|_{\X{b}_{\kappa}}^{2}+\left\|\theta_{\kappa}(\tau')\right\|_{\Y{b}{\kappa}{\ngrad}}^{2}+\frac{1}{\alpha}\left\|\theta_{\kappa}(\tau')\right\|_{\Y{b}{\kappa}{\ndiv}}^{2}\right).\label{individual-truncated-energy-functions}
		\end{align}
	The cumulative truncated energy function is then given by
		\begin{align}
		S_{M}(\tau_{1},\tau_{2})=\sum_{\kappa=1}^{N}S_{\kappa}^{b}(\tau_{1},\tau_{2}).\label{cumulative-truncated-energy-function}
		\end{align}
	\end{definition}
	Note that we have the identity
	\begin{align}
	S_{M}(0,\tau)=S_{M}(\tau),\label{truncated-energy-function-at-0}
	\end{align}
	and, for $\tau_{1}\leq\tau_{2}$, the inequalities
	\begin{align}
	S_{M}(\tau_{1},\tau_{2})\leq S_{M}(\tau_{2})\leq\max\{S_{M}(\tau_{1},\tau_{2}),S_{M}(\tau_{1})\}\leq S_{M}(\tau_{1},\tau_{2})+S_{M}(\tau_{1}).\label{truncated-and-original-energy-function-comparison}
	\end{align}
	With the damping functional from Definition $\ref{damping-functional-definition}$, we can state our main energy identity.
	
	\begin{theorem}[Higher Order Energy Identity]\label{energy-identity-theorem}Assume $\gamma=1+\frac{1}{n}$ for some $n\in\mathbb{Z}_{\geq2}$, or $\gamma\in(1,14/13)$. Let $\{(\dell_{\tau}\theta_{\kappa},\theta_{\kappa})|\kappa=1,2,\dots,N\}$ be the solution to $(\ref{delta-rescaled-lagrangian-euler-1})-(\ref{delta-rescaled-lagrangian-euler-3})$ on $[0,T]$, in the sense of Theorem \ref{local-well-posedness-theorem}, for some $M\geq2\ceil*{\alpha}+12$. Suppose the assumptions $\eqref{a-priori-smallness-mu-norm}$--$\eqref{a-priori-sum-of-theta-L-infinity-norms}$ hold. Assume that $\nabla\tilde{W}_{\kappa}\in\X{M}_{\kappa}$ for $\kappa=1,2,\dots,N$. Then for all $\tau\in[0,T]$, we have:
		\begin{align}
		&\frac{1}{2}\dell_{\tau}\left(\sum_{\kappa=1}^{N}\left(\frac{1}{\delta}e^{\beta\tau}\left\|\dell_{\tau}\theta_{\kappa}\right\|_{\X{M}_{\kappa}}^{2}+\left\|\theta_{\kappa}\right\|_{\Y{M}{\kappa}{\ngrad}}^{2}+\frac{1}{\alpha}\left\|\theta_{\kappa}\right\|_{\Y{M}{\kappa}{\ndiv}}^{2}-\left\|\theta_{\kappa}\right\|_{\Y{M}{\kappa}{\nCurl}}^{2}\right)\right)+\frac{1}{2}\mathbb{D}(\tau)\nonumber\\
		&=\sum_{\kappa=1}^{N}\left(\sum_{m+|\underline{n}|=0}^{M}\int_{\Omega}\chi\mathcal{R}_{\kappa}(m,\underline{n})^{i}\Ndell{m}{n}\dell_{\tau}\theta_{\kappa}^{i}dx+\sum_{|\underline{k}|=0}^{M}\int_{\Omega}\bar{\chi}\mathcal{R}_{\kappa}(\underline{k})^{i}\nabla^{\underline{k}}\dell_{\tau}\theta_{\kappa}^{i}dx\right)\nonumber\\
		&-\sum_{\kappa=1}^{N}\sum_{m+|\underline{n}|=0}^{M}\frac{1}{\delta}e^{\beta \tau}\int_{\Omega}\chi\tilde{W}_{\kappa}^{\alpha+m}\Ndell{m}{n}\left(\ninv[\kappa]^{j}_{i}\dell_{j}\psi_{\kappa}\right)\Ndell{m}{n}\dell_{\tau}\theta_{\kappa}^{i}\ dx\ ds\nonumber\\
		&-\sum_{\kappa=1}^{N}\sum_{|\underline{k}|=0}^{M}\frac{1}{\delta}e^{\beta \tau}\int_{\Omega}\bar{\chi}\tilde{W}_{\kappa}^{\alpha}\nabla^{\underline{k}}\left(\ninv[\kappa]^{j}_{i}\dell_{j}\psi_{\kappa}\right)\nabla^{\underline{k}}\dell_{\tau}\theta_{\kappa}^{i}\ dx\ ds,\label{energy-identity}
		\end{align}
		with $\mathcal{R}_{\kappa}(m,\underline{n})$ and $\mathcal{R}_{\kappa}(\underline{k})$ being remainder terms that we can bound effectively.
	\end{theorem}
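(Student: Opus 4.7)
The plan is to derive $\eqref{energy-identity}$ by applying $\Ndell{m}{n}$ to the rescaled momentum equation $\eqref{delta-rescaled-lagrangian-euler-1}$, testing against $\chi \tilde{W}_{\kappa}^{m}\Ndell{m}{n}\dell_{\tau}\theta_{\kappa}^{i}$, integrating over $\Omega$, and summing over $m+|\underline{n}|\leq M$ and $\kappa=1,\dots,N$; the interior estimates on $\supp\bar{\chi}$ are handled in the same way with $\Ndell{m}{n}$ replaced by $\nabla^{\underline{k}}$ and tested against $\bar{\chi}\,\nabla^{\underline{k}}\dell_{\tau}\theta_{\kappa}^{i}$ (no extra $\tilde{W}_{\kappa}$ weight is needed because $\tilde{W}_{\kappa}\sim 1$ there). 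The three terms in the momentum equation produce three distinct contributions: the kinetic term $\frac{1}{\delta}e^{\beta\tau}\tilde{W}_{\kappa}^{\alpha}(\dell_{\tau\tau}\theta_{\kappa}+\dell_{\tau}\theta_{\kappa})$ yields $\frac{1}{2}\dell_{\tau}\left[\frac{1}{\delta}e^{\beta\tau}\|\dell_{\tau}\theta_{\kappa}\|_{\X{M}_{\kappa}}^{2}\right]+\frac{1}{2}\mathbb{D}(\tau)$; the pressure term, after integration by parts, yields the time derivative of $\|\theta_{\kappa}\|_{\Y{M}{\kappa}{\ngrad}}^{2}+\frac{1}{\alpha}\|\theta_{\kappa}\|_{\Y{M}{\kappa}{\ndiv}}^{2}-\|\theta_{\kappa}\|_{\Y{M}{\kappa}{\nCurl}}^{2}$; and the potential term is kept verbatim on the right-hand side to be bounded later through Theorem $\ref{potential-term-estimate-theorem}$.

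For the kinetic term, the principal part of the Leibniz expansion of $\Ndell{m}{n}[\tilde{W}_{\kappa}^{\alpha}(\dell_{\tau\tau}\theta_{\kappa}+\dell_{\tau}\theta_{\kappa})]$ is $\tilde{W}_{\kappa}^{\alpha}\Ndell{m}{n}(\dell_{\tau\tau}\theta_{\kappa}+\dell_{\tau}\theta_{\kappa})$. Paired with $\chi\tilde{W}_{\kappa}^{m}\Ndell{m}{n}\dell_{\tau}\theta_{\kappa}^{i}$, the second-order-in-$\tau$ contribution is rewritten using
\begin{align*}
\dell_{\tau}\!\left(\tfrac{1}{\delta}e^{\beta\tau}\chi\tilde{W}_{\kappa}^{\alpha+m}|\Ndell{m}{n}\dell_{\tau}\theta_{\kappa}|^{2}\right)=\tfrac{\beta}{\delta}e^{\beta\tau}\chi\tilde{W}_{\kappa}^{\alpha+m}|\Ndell{m}{n}\dell_{\tau}\theta_{\kappa}|^{2}+\tfrac{2}{\delta}e^{\beta\tau}\chi\tilde{W}_{\kappa}^{\alpha+m}\Ndell{m}{n}\dell_{\tau\tau}\theta_{\kappa}^{i}\,\Ndell{m}{n}\dell_{\tau}\theta_{\kappa}^{i},
\end{align*}
which combined with the first-order-in-$\tau$ contribution $\frac{1}{\delta}e^{\beta\tau}\chi\tilde{W}_{\kappa}^{\alpha+m}|\Ndell{m}{n}\dell_{\tau}\theta_{\kappa}|^{2}$ yields precisely the coefficient $(2-\beta)/2$ in front of $\frac{1}{\delta}e^{\beta\tau}\|\dell_{\tau}\theta_{\kappa}\|_{\X{M}_{\kappa}}^{2}$, matching $\mathbb{D}(\tau)$ from Definition $\ref{damping-functional-definition}$. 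All off-diagonal Leibniz pieces, which commute $\Ndell{m}{n}$ past the purely spatial weight $\tilde{W}_{\kappa}^{\alpha}$, are placed in the remainder $\mathcal{R}_{\kappa}(m,\underline{n})$ (analogously $\mathcal{R}_{\kappa}(\underline{k})$ on the interior).

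The pressure contribution is where the real work lies. One first exchanges $\Ndell{m}{n}\dell_{k}$ for $\dell_{k}\Ndell{m}{n}$ using the commutator identities of Section $\ref{derivatives}$, relegating the commutator to the remainder. An integration by parts in $x$ follows, with no boundary contribution: the inner boundary of $\supp\chi$ is killed by $\chi$ and the outer boundary $\dell\Omega$ by $\tilde{W}_{\kappa}\rvert_{\dell\Omega}=0$. The Leibniz expansion of $\Ndell{m}{n}(\tilde{W}_{\kappa}^{1+\alpha}\ninv[\kappa]^{k}_{i}\njac_{\kappa}^{-1/\alpha})$ leaves a principal term carrying the prefactor $\chi\tilde{W}_{\kappa}^{1+\alpha+m}\njac_{\kappa}^{-1/\alpha}$, which is exactly the weight appearing in the $\Y{M}{\kappa}{\cdot}$ norms of Definition $\ref{energy-function-spaces-definition}$. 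Using $\eqref{rescaled-inverse-differentiation-formula}$--$\eqref{rescaled-jacobian-differentiation-formula}$, Piola's identity $\eqref{piola-identity}$, the observation that $\dell_{\tau}\zeta_{\kappa}-\dell_{\tau}\theta_{\kappa}=e^{-\tau}(\mu_{\kappa}-\bar{x}_{\kappa})$ contributes only a term harmlessly absorbed via $\eqref{a-priori-smallness-mu-norm}$, and an algebraic rearrangement that decomposes $\ngrad\Ndell{m}{n}\theta_{\kappa}$ into its trace, antisymmetric, and remaining symmetric parts, the principal term reassembles into $\frac{1}{2}\dell_{\tau}\bigl[\|\theta_{\kappa}\|_{\Y{M}{\kappa}{\ngrad}}^{2}+\frac{1}{\alpha}\|\theta_{\kappa}\|_{\Y{M}{\kappa}{\ndiv}}^{2}-\|\theta_{\kappa}\|_{\Y{M}{\kappa}{\nCurl}}^{2}\bigr]$. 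The factor $1/\alpha$ on the divergence piece arises specifically from the time derivative of $\njac_{\kappa}^{-1/\alpha}$ combined with $\dell_{\tau}\njac_{\kappa}=\njac_{\kappa}\,\ndiv\dell_{\tau}\zeta_{\kappa}$ from $\eqref{rescaled-jacobian-differentiation-formula}$; all remaining Leibniz and commutator contributions are swept into $\mathcal{R}_{\kappa}(m,\underline{n})$.

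The potential contribution, which contains no time derivative, is recorded verbatim as the last two lines of $\eqref{energy-identity}$, to be estimated subsequently via Theorem $\ref{potential-term-estimate-theorem}$; its diagonal Leibniz part produces exactly the stated integrals, while the off-diagonal Leibniz and the $[\Ndell{m}{n},\ninv[\kappa]^{k}_{i}\dell_{k}]$ commutator pieces can equivalently be absorbed into $\mathcal{R}_{\kappa}(m,\underline{n})$. I expect the principal difficulty to be the pressure-term manipulation: generating the precise coefficients $1,\,1/\alpha,\,-1$ in front of the three $\Y{M}{\kappa}{\cdot}$-norms requires careful tracking of the time derivative of the $\njac_{\kappa}^{-1/\alpha}$ weight combined with repeated use of Piola's identity, and one must verify that every remainder produced at each commutation or Leibniz step lives in a form whose spacetime integral against $\Ndell{m}{n}\dell_{\tau}\theta_{\kappa}$ (or $\nabla^{\underline{k}}\dell_{\tau}\theta_{\kappa}$) can be bounded by $\sqrt{S_{M}(\tau)}$ up to factors of $\delta,\varepsilon_{1},\varepsilon_{2}$, as will be needed to close the forthcoming energy estimates.
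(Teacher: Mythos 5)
Your overall architecture (apply high-order derivatives, test against the corresponding weighted derivative of $\dell_{\tau}\theta_{\kappa}$, read off the kinetic/damping part, reassemble the pressure term into the $\Y{M}{\kappa}{\ngrad}$, $\frac{1}{\alpha}\Y{M}{\kappa}{\ndiv}$, $-\Y{M}{\kappa}{\nCurl}$ combination, and keep the potential term verbatim) is the same as the paper's, which adapts \cite{PaHaJa}. But there is one concrete procedural difference that creates a genuine gap: you apply $\Ndell{m}{n}$ directly to $\eqref{delta-rescaled-lagrangian-euler-1}$ and test with $\chi\tilde{W}_{\kappa}^{m}\Ndell{m}{n}\dell_{\tau}\theta_{\kappa}$, whereas the paper first \emph{divides the equation by} $\tilde{W}_{\kappa}^{\alpha}$ and then acts with $\tilde{W}_{\kappa}^{\alpha+m}\Ndell{m}{n}$, testing with $\chi\Ndell{m}{n}\dell_{\tau}\theta_{\kappa}$. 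With the paper's ordering the kinetic term becomes exactly $\frac{1}{\delta}e^{\beta\tau}\tilde{W}_{\kappa}^{\alpha+m}\Ndell{m}{n}(\dell_{\tau\tau}\theta_{\kappa}+\dell_{\tau}\theta_{\kappa})$ with \emph{no} Leibniz cross terms at all, so the left-hand side of $\eqref{energy-identity}$ and $\frac12\mathbb{D}(\tau)$ come out cleanly and every remainder originates from the pressure (and potential) terms, where the weight bookkeeping is favourable.

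In your version the Leibniz expansion of $\Ndell{m}{n}\bigl[\tilde{W}_{\kappa}^{\alpha}(\dell_{\tau\tau}\theta_{\kappa}+\dell_{\tau}\theta_{\kappa})\bigr]$ produces off-diagonal terms of the form $\frac{1}{\delta}e^{\beta\tau}\,\Ndell{a}{b}(\tilde{W}_{\kappa}^{\alpha})\,\Ndell{c}{d}(\dell_{\tau\tau}\theta_{\kappa}+\dell_{\tau}\theta_{\kappa})$ with $a+|\underline{b}|\geq1$, which you propose to "place in the remainder." These are not remainders "that we can bound effectively" in the sense required: they contain $\dell_{\tau\tau}\theta_{\kappa}$, which is not controlled by $S_{M}$ or $C_{M}$, they carry the large prefactor $\frac{1}{\delta}e^{\beta\tau}$ rather than any smallness in $\delta$, and since $\rdell(\tilde{W}_{\kappa}^{\alpha})\sim\tilde{W}_{\kappa}^{\alpha-1}$ near $\dell\Omega$ the available weight $\tilde{W}_{\kappa}^{m+\alpha-a-|\underline{b}|}$ falls short of the $\tilde{W}_{\kappa}^{\frac{\alpha+c}{2}+\frac{\alpha+m}{2}}$ needed to Cauchy--Schwarz against the $\X{M}_{\kappa}$ norms. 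To rescue these terms you would have to either substitute $\dell_{\tau\tau}\theta_{\kappa}$ back from the equation (reintroducing the $\tilde{W}_{\kappa}^{-\alpha}\dell_{k}(\tilde{W}_{\kappa}^{1+\alpha}\cdots)$ structure and the potential) or integrate by parts in $\tau$, together with Hardy-type weight trading --- none of which is addressed in your plan. The paper's choice of dividing by $\tilde{W}_{\kappa}^{\alpha}$ before differentiating is precisely what makes this issue disappear, so you should either adopt that ordering or supply the extra argument for your kinetic remainders; the analogous off-diagonal pieces you generate in the potential term are comparatively harmless because they inherit the $\delta^{\alpha}e^{-\tau}$ smallness from $\eqref{self-interaction-rescaled-form}$--$\eqref{tidal-terms-rescaled-form}$, but the kinetic ones have no such saving factor.
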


	\begin{proof}[\textbf{Proof}]The proof of Theorem $\ref{energy-identity-theorem}$ can be adapted from the methods used in~\cite{PaHaJa}. For each fixed $\kappa$, on $\supp{\chi}$ we first divide by $\tilde{W}_{\kappa}^{\alpha}$ in $\eqref{delta-rescaled-lagrangian-euler-1}$, and then act on the result with $\tilde{W}_{\kappa}^{\alpha+m}\Ndell{m}{n}$, some $0\leq m+|\underline{n}|\leq M$, resulting in the identity
		\begin{align*}
		\frac{1}{\delta}e^{\beta\tau}\tilde{W}_{\kappa}^{\alpha+m}\Ndell{m}{n}\left(\dell_{\tau\tau}\theta_{\kappa}+\dell_{\tau}\theta_{\kappa}\right)&+\tilde{W}_{\kappa}^{\alpha+m}\Ndell{m}{n}\left(\frac{1}{\tilde{W}_{\kappa}^{\alpha}}\dell_{k}\left(\tilde{W}_{\kappa}^{1+\alpha}\ninv[\kappa]^{k}_{*}\njac_{\kappa}^{-1/\alpha}\right)\right)\nonumber\\
		&+\frac{1}{\delta}e^{\beta\tau}\tilde{W}_{\kappa}^{\alpha+m}\Ndell{m}{n}\left(\ninv[\kappa]^{k}_{*}\dell_{k}\psi_{\kappa}\right)=0.
		\end{align*}
		From here, we take the $L^{2}(\Omega)$ inner product of this equality with $\chi\Ndell{m}{n}\dell_{\tau}\theta_{\kappa}$. We can follow a similar procedure on $\supp{\bar{\chi}}$ with $\tilde{W}_{\kappa}^{\alpha}\nabla^{\underline{k}}$ and $\bar{\chi}\nabla^{\underline{k}}\dell_{\tau}\theta_{\kappa}$. From here we can extract the left hand side in $\eqref{energy-identity}$, and move everything else to the right hand side. Doing this for each $\kappa=1,2,\dots,N$ and then summing gives us the energy identity $\eqref{energy-identity}$, once we have labelled the remainders of this procedure from the pressure term $\mathcal{R}_{\kappa}(m,\underline{n})$ and $\mathcal{R}_{\kappa}(\underline{k})$ respectively.
	\end{proof}
	
	\begin{theorem}[Higer Order Energy Inequality]\label{energy-inequality-theorem}Assume $\gamma=1+\frac{1}{n}$ for some $n\in\mathbb{Z}_{\geq2}$, or $\gamma\in(1,14/13)$. Let $\{(\dell_{\tau}\theta_{\kappa},\theta_{\kappa})|\kappa=1,2,\dots,N\}$ be the solution to $(\ref{delta-rescaled-lagrangian-euler-1})-(\ref{delta-rescaled-lagrangian-euler-3})$ on $[0,T]$, in the sense of Theorem \ref{local-well-posedness-theorem}, for some $M\geq2\ceil*{\alpha}+12$. Suppose the assumptions $\eqref{a-priori-smallness-mu-norm}$--$\eqref{a-priori-sum-of-theta-L-infinity-norms}$ hold. Assume that $\nabla\tilde{W}_{\kappa}\in\X{M}_{\kappa}$ for $\kappa=1,2,\dots,N$. Then for all $[\tau_{1},\tau]\subset[0,T]$, we have:
		\begin{align}
		S_{M}(\tau_{1},\tau)\leq C_{1}S_{M}(\tau_{1})+C_{2}\left(S_{M}(0)+C_{M}(0)\right)+C_{3}\sqrt{\delta}+C_{4}\sqrt{\delta}S_{M}(\tau_{1},\tau)+C_{5}\sqrt{\delta}\int_{\tau_{1}}^{\tau}\mathcal{G}(\tau')S_{M}(\tau_{1},\tau')\ d\tau',\label{energy-inequality}
		\end{align}
	for some constants $C_{1},\dots,C_{5}\in[1,\infty)$, and $\mathcal{G}:[0,\infty)\rightarrow[0,\infty)$ integrable.
	\end{theorem}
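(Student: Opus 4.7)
The plan is to integrate the energy identity of Theorem $\ref{energy-identity-theorem}$ over $[\tau_{1},\tau]$, estimate each term on the right, and then take the supremum over the upper endpoint to recover $S_{M}(\tau_{1},\tau)$ on the left.

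After integrating, the left hand side yields the difference
\begin{align*}
\frac{1}{2}\sum_{\kappa=1}^{N}\left[\frac{1}{\delta}e^{\beta s}\|\dell_{\tau}\theta_{\kappa}\|_{\X{M}_{\kappa}}^{2}+\|\theta_{\kappa}\|_{\Y{M}{\kappa}{\ngrad}}^{2}+\frac{1}{\alpha}\|\theta_{\kappa}\|_{\Y{M}{\kappa}{\ndiv}}^{2}-\|\theta_{\kappa}\|_{\Y{M}{\kappa}{\nCurl}}^{2}\right]_{\tau_{1}}^{\tau}+\frac{1}{2}\int_{\tau_{1}}^{\tau}\mathbb{D}(s)\,ds.
\end{align*}
Because $\beta=3(\gamma-1)\leq 3/2$ in our range of $\gamma$, the damping functional $\mathbb{D}$ is nonnegative and may be dropped in an upper bound for $S_{M}$. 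The negative $\nCurl$ term at time $\tau$ is absorbed by invoking Theorem $\ref{curl-estimates-theorem}$, which bounds it by $S_{M}(0)+C_{M}(0)+\delta S_{M}(\tau)$; the $\nCurl$ term at time $\tau_{1}$ is similarly controlled and contributes to the $S_{M}(\tau_{1})$ term in $\eqref{energy-inequality}$. The remaining three norm groupings, after taking $\sup_{\tau'\in[\tau_{1},\tau]}$, reconstruct $S_{M}(\tau_{1},\tau)$ up to a multiplicative constant.

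For the potential integrals on the right, Theorem $\ref{potential-term-estimate-theorem}$ gives the bound $\delta^{\alpha-1/2}\int_{\tau_{1}}^{\tau}e^{(\beta/2-1)s}\sqrt{S_{M}(s)}\,ds$. Since $\alpha\geq 2$ and $\beta/2-1<0$, we have $\delta^{\alpha-1/2}\leq\sqrt{\delta}$, and the exponential factor is integrable on $[0,\infty)$. Splitting $S_{M}(s)\leq S_{M}(\tau_{1})+S_{M}(\tau_{1},s)$ via $\eqref{truncated-and-original-energy-function-comparison}$ and applying Cauchy--Schwarz distributes the contribution among the $\sqrt{\delta}$, $S_{M}(\tau_{1})$, and $\sqrt{\delta}S_{M}(\tau_{1},\tau)$ terms.

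The bulk of the work lies in bounding the pressure remainder pairings $\int\chi\,\mathcal{R}_{\kappa}(m,\underline{n})\cdot\Ndell{m}{n}\dell_{\tau}\theta_{\kappa}\,dx$ and the corresponding $\bar{\chi}$ pieces, integrated in time. Each $\mathcal{R}_{\kappa}$ is a Leibniz expansion in which at most one factor is a top order derivative of $\theta_{\kappa}$, $\ninv[\kappa]$, or $\njac_{\kappa}^{-1/\alpha}$, while the remaining factors are lower order and admit $L^{\infty}$ bounds via $\eqref{a-priori-smallness-mu-norm}$--$\eqref{a-priori-sum-of-theta-L-infinity-norms}$, Lemma $\ref{L-infinity-energy-space-bound}$, and the lower-order estimates. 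Cauchy--Schwarz against $\Ndell{m}{n}\dell_{\tau}\theta_{\kappa}$ produces a factor $\delta^{-1/2}e^{\beta s/2}\sqrt{S_{M}(s)}$; the remaining weights, together with the explicit $e^{\beta s}$ prefactor from $\eqref{delta-rescaled-lagrangian-euler-1}$ once everything is referenced back to the $\X{M}_{\kappa}$ structure, leave each integrand in the form $\sqrt{\delta}\,\mathcal{G}(s)\sqrt{S_{M}(s)}\sqrt{S_{M}(s)}$ with $\mathcal{G}\in L^{1}([0,\infty))$. Summing over $\kappa,m,\underline{n},\underline{k}$ and splitting $S_{M}$ via $\eqref{truncated-and-original-energy-function-comparison}$ as before yields precisely the Gr\"onwall-type integral term on the right of $\eqref{energy-inequality}$. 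Taking $\sup_{\tau'\in[\tau_{1},\tau]}$ on the left and collecting all contributions completes the derivation.

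The main obstacle is the careful bookkeeping of the remainders $\mathcal{R}_{\kappa}(m,\underline{n})$ and $\mathcal{R}_{\kappa}(\underline{k})$: one must check that every Leibniz-generated term can be cast in the shape $\sqrt{\delta}\mathcal{G}(s)S_{M}(\tau_{1},s)$ (plus harmless pieces), with a single $\mathcal{G}$ that is integrable over $[0,\infty)$ independently of $\tau$. The gain of $\sqrt{\delta}$ in each remainder is nontrivial and relies on extracting either an explicit factor of $\delta$ from the smallness of derivatives of $\theta_{\kappa}$ (through $S_{M}$ and $\eqref{a-priori-bound-cumulative-energy}$) or from the appearance of $\mu_{\kappa}$ controlled by $\eqref{a-priori-smallness-mu-norm}$, matched with the $\delta^{-1}e^{\beta s}$ scaling inherent to the $\X{M}_{\kappa}$ norm of $\dell_{\tau}\theta_{\kappa}$; this is exactly the scheme developed for the one-body problem in $\cite{PaHaJa}$, which transfers with only the cosmetic modifications indicated in the curl estimates section.
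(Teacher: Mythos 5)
Your proposal follows essentially the same route as the paper's proof: integrate the identity of Theorem \ref{energy-identity-theorem}, drop the nonnegative damping functional, absorb the $\nCurl$ norms via Theorem \ref{curl-estimates-theorem}, control the gravitational terms via Theorem \ref{potential-term-estimate-theorem} together with $\delta+\delta^{\alpha-\frac{1}{2}}\lesssim\sqrt{\delta}$ and the integrability of $e^{(\frac{\beta}{2}-1)\tau}$ and $e^{-\frac{\beta}{2}\tau}$, treat the remainders $\mathcal{R}_{\kappa}$ by the scheme of \cite{PaHaJa} plus the $\mu_{\kappa}$ bound $\eqref{a-priori-smallness-mu-norm}$, and close with Young's inequality, $\eqref{truncated-and-original-energy-function-comparison}$, and a supremum over the upper endpoint. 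The one point to tighten is that the exact $\tau$-derivative on the left of $\eqref{energy-identity}$ does not contain $\left\|\theta_{\kappa}\right\|_{\X{M}_{\kappa}}^{2}$, which is one of the four terms in $S_{M}$, so your "remaining three norm groupings reconstruct $S_{M}(\tau_{1},\tau)$" needs the extra input of Lemma \ref{theta-estimates-lemma} (namely $\sum_{\kappa}\left\|\theta_{\kappa}\right\|_{\X{M}_{\kappa}}^{2}\lesssim\delta S_{M}(\tau)$), exactly as the paper invokes it in $\eqref{energy-inequality-proof-1}$.
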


	\begin{proof}Let $0\leq\tau_{1}\leq s\leq\tau\leq T$. Integrating the identity $\eqref{energy-identity}$ over $[\tau_{1},s]$, as well as utilising Lemma $\ref{theta-estimates-lemma}$, Theorem $\ref{potential-term-estimate-theorem}$, and Theorem $\ref{curl-estimates-theorem}$ gives us
		\begin{align}
		&\sum_{\kappa=1}^{N}\left(\frac{1}{\delta}e^{\beta s}\left\|\dell_{\tau}\theta_{\kappa}(s)\right\|_{\X{M}_{\kappa}}^{2}+\left\|\theta_{\kappa}(s)\right\|_{\X{M}_{\kappa}}^{2}+\left\|\theta_{\kappa}(s)\right\|_{\Y{M}{\kappa}{\ngrad}}^{2}+\frac{1}{\alpha}\left\|\theta_{\kappa}(s)\right\|_{\Y{M}{\kappa}{\ndiv}}^{2}\right)+\int_{\tau_{1}}^{s}\mathbb{D}(\tau')\ d\tau'\nonumber\\
		&\lesssim\sum_{\kappa=1}^{N}\left(\frac{1}{\delta}e^{\beta\tau_{1}}\left\|\dell_{\tau}\theta_{\kappa}(\tau_{1})\right\|_{\X{M}_{\kappa}}^{2}+\left\|\theta_{\kappa}(\tau_{1})\right\|_{\Y{M}{\kappa}{\ngrad}}^{2}+\left\|\theta_{\kappa}(\tau_{1})\right\|_{\Y{M}{\kappa}{\ndiv}}^{2}\right)+S_{M}(0)+C_{M}(0)+\delta S_{M}(s)\nonumber\\
		&+\sum_{\kappa=1}^{N}\left(\sum_{m+|\underline{n}|=0}^{M}\int_{\tau_{1}}^{s}\int_{\Omega}\chi\mathcal{R}_{\kappa}(m,\underline{n})^{i}\Ndell{m}{n}\dell_{\tau}\theta_{\kappa}^{i}dxd\tau'+\sum_{|\underline{k}|=0}^{M}\int_{\tau_{1}}^{s}\int_{\Omega}\bar{\chi}\mathcal{R}_{\kappa}(\underline{k})^{i}\nabla^{\underline{k}}\dell_{\tau}\theta_{\kappa}^{i}dxd\tau'\right)\nonumber\\
		&+\delta^{\alpha-\frac{1}{2}}\int_{\tau_{1}}^{s}e^{\left(\frac{\beta}{2}-1\right)\tau'}\sqrt{S_{M}(\tau')}\ d\tau'.\label{energy-inequality-proof-1}
		\end{align}
		\noindent For each $\kappa$ most of the terms coming from $\mathcal{R}_{\kappa}(m,\underline{n})$ and $\mathcal{R}_{\kappa}(\underline{k})$ will be analogous to the ones obtained in \cite{PaHaJa}. However, just as with the curl estimates in Section $\ref{curl-estimates-section}$, we will also have remainder terms that look, for example, like a weighted $L^{2}(\Omega)$ inner product of $\Ndell{m}{n}\mu_{\kappa}$ and $\Ndell{m}{n}\dell_{\tau}\theta_{\kappa}$, coming from the derivatives of $\ninv[\kappa]$ and $\pjac_{\kappa}$. These terms can be bounded, for example like
		\begin{align}
		\left|\int_{\Omega}\chi \tilde{W}_{\kappa}^{\alpha+m}\Ndell{m}{n}\mu_{\kappa}^{i}\Ndell{m}{n}\dell_{\tau}\theta_{\kappa}dx\right|\lesssim\sqrt{\delta}e^{-\frac{\beta}{2}\tau}\sqrt{S_{M}(\tau)},\label{energy-inequality-proof-1a}
		\end{align}
		where we use $\eqref{a-priori-smallness-mu-norm}$ to bound $\left\|\nabla\mu_{\kappa}\right\|_{\X{M}_{\kappa}}$ by a constant.
			
		Moreover, as we are assuming $\delta$ is small, in particular $\delta<1$, we have the bound $\delta+\delta^{\alpha-\frac{1}{2}}<2\sqrt{\delta}$, as $\alpha\geq2$. Thus we have
		\begin{align}
		&\sum_{\kappa=1}^{N}\left(\frac{1}{\delta}e^{\beta s}\left\|\dell_{\tau}\theta_{\kappa}(s)\right\|_{\X{M}_{\kappa}}^{2}+\left\|\theta_{\kappa}(s)\right\|_{\X{M}_{\kappa}}^{2}+\left\|\theta_{\kappa}(s)\right\|_{\Y{M}{\kappa}{\ngrad}}^{2}+\frac{1}{\alpha}\left\|\theta_{\kappa}(s)\right\|_{\Y{M}{\kappa}{\ndiv}}^{2}\right)\nonumber\\
		&\lesssim S_{M}(\tau_{1})+S_{M}(0)+C_{M}(0)+\sqrt{\delta} S_{M}(s)+\sqrt{\delta}\int_{\tau_{1}}^{s}e^{\left(\frac{\beta}{2}-1\right)\tau'}\sqrt{S_{M}(\tau')}\ d\tau'+\sqrt{\delta}\int_{\tau_{1}}^{s}e^{-\frac{\beta}{2}\tau'}\left(\sqrt{S_{M}(\tau')}+S_{M}(\tau')\right)\ d\tau'\nonumber\\
		&\lesssim S_{M}(\tau_{1})+S_{M}(0)+C_{M}(0)+\sqrt{\delta} S_{M}(s)+\sqrt{\delta}+\sqrt{\delta}\int_{\tau_{1}}^{s}\left(e^{\left(\frac{\beta}{2}-1\right)\tau'}+e^{-\frac{\beta}{2}\tau'}\right)S_{M}(\tau')\ d\tau'\nonumber\\
		&\lesssim S_{M}(\tau_{1})+S_{M}(0)+C_{M}(0)+\sqrt{\delta} S_{M}(\tau_{1},s)+\sqrt{\delta}+\sqrt{\delta}\int_{\tau_{1}}^{s}\left(e^{\left(\frac{\beta}{2}-1\right)\tau'}+e^{-\frac{\beta}{2}\tau'}\right)S_{M}(\tau_{1},\tau')\ d\tau'.\label{energy-inequality-proof-2}
		\end{align}
		The second bound is obtained by using Young's inequality which gives us $\sqrt{\delta}\sqrt{S_{M}(\tau')}\lesssim\sqrt{\delta}+\sqrt{\delta}S_{M}(\tau')$, and also the fact that $e^{\left(\frac{\beta}{2}-1\right)\tau}$ and $e^{-\frac{\beta}{2}\tau}$ are integrable, as $0<\beta\leq3/2$. The last bound is obtain by using $\eqref{truncated-and-original-energy-function-comparison}$, as well as the integrability of $e^{\left(\frac{\beta}{2}-1\right)\tau}$ and $e^{-\frac{\beta}{2}\tau}$ once again.
		
		As $\eqref{energy-inequality-proof-2}$ holds for all $s\in[\tau_{1},\tau]$, we can take the supremum over this interval, and obtain
		\begin{align}
		S_{M}(\tau_{1},\tau)\leq C_{1}S_{M}(\tau_{1})+C_{2}\left(S_{M}(0)+C_{M}(0)\right)+C_{3}\sqrt{\delta}+C_{4}\sqrt{\delta}S_{M}(\tau_{1},\tau)+C_{5}\sqrt{\delta}\int_{\tau_{1}}^{\tau}\mathcal{G}(\tau')S_{M}(\tau_{1},\tau')\ d\tau',\label{energy-inequality-proof-3}
		\end{align}
		where $\mathcal{G}(\tau)=e^{\left(\frac{\beta}{2}-1\right)\tau}+e^{-\frac{\beta}{2}\tau}$. The statement $\eqref{energy-inequality}$ follows by noting that we can adjust constants on the right hand side of $\eqref{energy-inequality-proof-2}$ to be $\geq1$ if necessary.
	\end{proof}

\section{Proof of Theorem $\ref{main-theorem}$.}\label{proof-of-main-result-section}
Let $T$ be such that
\begin{align}
\sup_{0\leq\tau\leq T}S_{M}(\tau)=S_{M}(T)\leq \bar{C}(S_{M}(0)+C_{M}(0)+\sqrt\delta),\label{main-proof-inequality-1}
\end{align}
for some constant $\bar{C}$, whose existence is guaranteed by the Local Well-Posedness theory set out in Theorem $\ref{local-well-posedness-theorem}$. Let $C^{*}$ be defined by
\begin{align}
C^{*}=3(C_{1}\bar{C}+C_{2}+C_{3}),\label{C-star-definition}
\end{align}
with $C_{i}$, $i=1,2,3,4,5$ given in Theorem \ref{energy-inequality-theorem}. Note that $\bar{C}<C^{*}$, so given
\begin{align}
T^{*}=\sup_{\tau\geq0}\left\{\tau\ |\text{\ Solution to $\eqref{delta-rescaled-lagrangian-euler-1}$--$\eqref{delta-rescaled-lagrangian-euler-3}$ exists on $[0,\tau]$\ }, S_{M}(\tau)\leq C^{*}(S_{M}(0)+C_{M}(0)+\sqrt\delta)\right\},\label{T-star-definition}
\end{align}
we have $T\leq T^{*}$. Now letting $\tau_{1}=T/2$, Theorem \ref{energy-inequality-theorem} tells us that for any $\tau\in[\frac{T}{2},T^{*})$, we have
\begin{align}
S_{M}\left(\frac{T}{2},\tau\right)&\leq C_{1}S_{M}\left(\frac{T}{2}\right)+C_{2}\sqrt\delta+C_{3}\left(C_{M}(0)+S_{M}(0)\right)+C_{4}\sqrt{\delta}S_{M}\left(\frac{T}{2},\tau\right)+C_{5}\sqrt{\delta}\int_{\frac{T}{2}}^{\tau}\mathcal{G}(\tau')S_{M}\left(\frac{T}{2},\tau'\right)\ d\tau'\nonumber\\
&\leq C_{1}S_{M}\left(\frac{T}{2}\right)+C_{2}\sqrt\delta+C_{3}\left(C_{M}(0)+S_{M}(0)\right)+C_{4}\sqrt{\delta}S_{M}\left(\frac{T}{2},\tau\right)+C_{5}\sqrt{\delta}\left(\int_{0}^{\infty}\mathcal{G}(\tau')\ d\tau'\right)S_{M}\left(\frac{T}{2},\tau\right).\label{main-proof-inequality-2}
\end{align}
Let $\delta$ satisfy
\begin{align}
\delta\leq\min{\left(\left(4C_{4}\right)^{-2},\left(4C_{5}\int_{0}^{\infty}\mathcal{G}(\tau')d\tau'\right)^{-2}\right)}.\label{delta-smallness-requirements}
\end{align}
Then we have 
\begin{align}
S_{M}\left(\frac{T}{2},\tau\right)\leq 2C_{1}S_{M}\left(\frac{T}{2}\right)+2C_{2}\sqrt\delta+2C_{3}\left(C_{M}(0)+S_{M}(0)\right).\label{main-proof-inequality-3}
\end{align}
Using $(\ref{main-proof-inequality-1})$ to bound $S_{M}(T/2)$ by $\bar{C}\left(C_{M}(0)+S_{M}(0)+\sqrt\delta\right)$, we obtain
\begin{align}
S_{M}\left(\frac{T}{2},\tau\right)&\leq2C_{1}\bar{C}\left(C_{M}(0)+S_{M}(0)+\sqrt\delta\right)+2C_{2}\sqrt\delta+2C_{3}\left(C_{M}(0)+S_{M}(0)\right)\nonumber\\
&\leq2(C_{1}\bar{C}+C_{2}+C_{3})\left(C_{M}(0)+S_{M}(0)+\sqrt \delta\right)<C^{*}\left(C_{M}(0)+S_{M}(0)+\sqrt\delta\right).\label{main-proof-inequality-4}
\end{align}
Combining $(\ref{main-proof-inequality-4})$ with $(\ref{main-proof-inequality-1})$, and using $\eqref{truncated-and-original-energy-function-comparison}$ we obtain
\begin{align}
S_{M}(\tau)\leq\max\{\bar{C},2(C_{1}\bar{C}+C_{2}+C_{3})\}\left(C_{M}(0)+S_{M}(0)+\sqrt\delta\right)<C^{*}\left(C_{M}(0)+S_{M}(0)+\sqrt\delta\right)
\end{align}
for all $\tau\in[0,T^{*})$. Shrinking $\delta$ further if necessary, we also improve our a priori assumptions $\eqref{a-priori-bound-cumulative-energy}$--$\eqref{a-priori-sum-of-theta-L-infinity-norms}$. For example, for all $\tau\in[0,T^{*})$,
\begin{align}
\left\|\ninv[\kappa]-\mathbb{I}\right\|_{L^{\infty}}=\left\|\int_{0}^{\tau}\dell_{\tau}\ninv[\kappa] d\tau'\right\|_{L^{\infty}}\leq C\sqrt{\delta}\int_{0}^{\tau}e^{-\frac{\beta}{2}\tau'}S_{M}(\tau')d\tau'<\varepsilon_{2},
\end{align}
for small enough $\delta$. Similarly for $\njac_{\kappa}$, and $\theta_{\kappa}$. Then, by continuity of $S_{M}(\tau)$ as a function of $\tau$, we must have that $T^{*}=\infty$. Therefore, the bound $\eqref{global-energy-estimate}$ follows. It is left to prove $\eqref{theta-infinity-statement}$. Fix $\kappa\in\{1,2,\dots,N\}$. Let $\tau_{1}>\tau_{2}$. For any $m+|\underline{n}|\leq M$, we have the estimate
\begin{align}
&\spaceI\chi \tilde{W}_{\kappa}^{\alpha+m}\left|\Ndell{m}{n}\left(\theta_{\kappa}(\tau_{1})-\theta_{\kappa}(\tau_{2})\right)\right|^{2}dx=\spaceI\chi \tilde{W}_{\kappa}^{\alpha+m}\left|\int_{\tau_{2}}^{\tau_{1}}\Ndell{m}{n}\dell_{\tau}\theta_{\kappa}(\tau')d\tau'\right|^{2}dx\nonumber\\
&\lesssim\left(\int_{\tau_{2}}^{\tau_{1}}e^{-\frac{\beta}{2}\tau'}d\tau'\right)\int_{\tau_{2}}^{\tau_{1}}e^{\frac{\beta}{2}\tau'}\spaceI\chi W^{\alpha+m}\left|\Ndell{m}{n}\dell_{\tau}\theta_{\kappa}\right|^{2}dx\ d\tau'\nonumber\\
&\lesssim\delta S_{M}(\tau_{1})\int_{\tau_{2}}^{\tau_{1}}e^{-\frac{\beta}{2}\tau'}d\tau' \nonumber \\
& \lesssim \delta \left(e^{-\frac{\sigma_{1}}{2}\tau_{2}}-e^{-\frac{\sigma_{1}}{2}\tau_{1}}\right)\rightarrow0\ \ \tau_{1},\tau_{2}\rightarrow\infty
\end{align}
The bound follows from Cauchy-Schwarz in $\tau$ and the global energy estimate $\eqref{global-energy-estimate}$. A similar bound holds for any $|\underline{k}|\leq N$ on $\supp{\bar{\chi}}$. Thus $\theta_{\kappa}(\tau_{n})$ is Cauchy, for any strictly increasing sequence $\tau_{n}$. As $\X{M}_{\kappa}$ is a Banach space, $\lim_{\tau\to\infty}\theta_{\kappa}(\tau)$ exists in $\X{M}_{\kappa}$; we call this limit $\theta_{\kappa}^{(\infty)}$. This gives $(\ref{theta-infinity-statement})$.

\section*{Acknowledgments}
The author acknowledges the support of the EPSRC studentship grant EP/N509498/1.
		
		\begin{appendices}
		\section{Local Well-Posedness}\label{local-well-posedness}
		The local-in-time theory for the system $\eqref{delta-rescaled-lagrangian-euler-1}$--$\eqref{delta-rescaled-lagrangian-euler-3}$ can be adapted from \cite{HaJa4}, using the framework set out in \cite{JaMa2015}. The tidal terms of the form $\mathscr{I}_{[\kappa,\kappa']}$, defined in $\eqref{lagrangian-gradient-of-lagrangian-potential}$, are present because $N\geq2$, since these terms are exactly the ones which encode how the stars in our system interact with each other via gravity. In \cite{HaJa4}, $N=1$, meaning they only have to deal with the self-interaction terms $\mathscr{G}_{\kappa}$, also defined in $\eqref{lagrangian-gradient-of-lagrangian-potential}$. As mentioned in the introduction, in Eulerian coordinates, the gravitational potential term $\rho_{\kappa}\nabla\phi$ in $\eqref{pre-euler-2}$ is lower order, from the point of view of derivatives, to $\nabla p_{\kappa}$. As discussed in \cite{HaJa4}, this means that it is enough to show sufficient a priori estimates for the gravitational potential terms in order to construct a local-in-time theory. For the tidal terms $\mathscr{I}_{[\kappa,\kappa]}$, these can be shown using similar methods to Proposition $\ref{tidal-term-estimates-proposition}$. For details on the self-interaction terms and other aspects of constructing the local-in-time theory, see $\cite{JaMa2015,HaJa4}$.
		\section{Useful Estimates}\label{useful-estimates}
			
		In this appendix we state some estimates that will be used in the proof of global-in-time existence of our solutions.
			
			\begin{lemma}\label{theta-estimates-lemma}
				Assume $\gamma=1+\frac{1}{n}$ for some $n\in\mathbb{Z}_{\geq2}$, or $\gamma\in(1,14/13)$. Let $\{(\dell_{\tau}\theta_{\kappa},\theta_{\kappa})|\kappa=1,2,\dots,N\}$ be the solution to $(\ref{delta-rescaled-lagrangian-euler-1})-(\ref{delta-rescaled-lagrangian-euler-3})$ on $[0,T]$, in the sense of Theorem \ref{local-well-posedness-theorem}, for some $M\geq2\ceil*{\alpha}+12$. Suppose the assumptions $\eqref{a-priori-smallness-mu-norm}$--$\eqref{a-priori-sum-of-theta-L-infinity-norms}$ hold. Then for all $\tau\in[0,T]$, we have:
				\begin{align}
				\sum_{\kappa=1}^{N}\left\|\theta_{\kappa}\right\|_{\X{M}_{\kappa}}^{2}\lesssim\delta S_{M}(\tau).\label{theta-estimates-lemma-statement}
				\end{align}
			\end{lemma}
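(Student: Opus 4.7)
The plan is to exploit the fact that $\theta_{\kappa}(0,\cdot)\equiv 0$ by the initial condition $\eqref{delta-rescaled-lagrangian-euler-2}$, so that for each $\tau\in[0,T]$ we can write
\[
\theta_{\kappa}(\tau,x)=\int_{0}^{\tau}\dell_{\tau}\theta_{\kappa}(\tau',x)\,d\tau',
\]
and then transfer all $\X{M}_{\kappa}$-regularity of $\theta_{\kappa}$ onto $\dell_{\tau}\theta_{\kappa}$, where the definition of $S^{M}_{\kappa}$ in $\eqref{individual-energy-function}$ supplies the crucial extra factor $\delta e^{-\beta\tau'}$.

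More precisely, for each differential operator $\mathcal{D}\in\{\chi^{1/2}\Ndell{m}{n},\bar\chi^{1/2}\nabla^{\underline{k}}\}$ appearing in the definition of $\|\cdot\|_{\X{M}_{\kappa}}$ (see $\eqref{energy-space-norm}$), I commute $\mathcal{D}$ with the time integral and apply Minkowski's inequality in the weighted $L^{2}(\Omega,\tilde W_{\kappa}^{\alpha+m}dx)$ (resp.\ $L^{2}(\Omega,\tilde W_{\kappa}^{\alpha}dx)$) norm to obtain
\[
\|\theta_{\kappa}(\tau)\|_{\X{M}_{\kappa}}\leq\int_{0}^{\tau}\|\dell_{\tau}\theta_{\kappa}(\tau')\|_{\X{M}_{\kappa}}\,d\tau'.
\]
From Definition $\ref{energy-function}$ we have the pointwise-in-time bound
\[
\|\dell_{\tau}\theta_{\kappa}(\tau')\|_{\X{M}_{\kappa}}^{2}\leq\delta e^{-\beta\tau'}S^{M}_{\kappa}(\tau')\leq\delta e^{-\beta\tau'}S_{M}(\tau),
\]
the last step using that $S_{M}$ is defined as a supremum and is therefore monotonically non-decreasing in $\tau$.

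Combining the last two displays and using Cauchy--Schwarz (or equivalently that $\int_{0}^{\infty}e^{-\beta\tau'/2}d\tau'=2/\beta<\infty$, which is finite because $\beta=3(\gamma-1)\leq 3/2$ in our range of $\gamma$), I get
\[
\|\theta_{\kappa}(\tau)\|_{\X{M}_{\kappa}}\leq\sqrt{\delta S_{M}(\tau)}\int_{0}^{\tau}e^{-\beta\tau'/2}\,d\tau'\lesssim\sqrt{\delta S_{M}(\tau)},
\]
so squaring and summing over $\kappa=1,\dots,N$ yields $\eqref{theta-estimates-lemma-statement}$. There is no real obstacle: the only thing to be a little careful about is justifying the interchange of $\mathcal{D}$ with the time integral, which follows from the regularity provided by Theorem \ref{local-well-posedness-theorem}, and confirming that the integrability of $e^{-\beta\tau'/2}$ holds uniformly in the admissible range of $\gamma$.
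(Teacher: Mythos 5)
Your proof is correct and is exactly the argument the paper intends: the paper's own proof is a one-line remark that the lemma "is an application of the Fundamental Theorem of Calculus" (adapted from \cite{PaHaJa}), and your write-up — using $\theta_{\kappa}(0)=0$, Minkowski's inequality for the Bochner integral in $\X{M}_{\kappa}$, the factor $\delta e^{-\beta\tau'}$ extracted from the definition of $S_{M}$, and the integrability of $e^{-\beta\tau'/2}$ — is precisely the fleshed-out version of that argument.
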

			
			\begin{proof}[\textbf{Proof}]
				The proof is readily adadpted from \cite{PaHaJa}. It is an application of the Fundamental Theorem of Calculus.
			\end{proof}
			
			%\begin{lemma}\label{theta-Y-norm-theta-X-norm-relation-lemma}Assume $\gamma=1+\frac{1}{n}$ for some $n\in\mathbb{Z}_{\geq2}$, or $\gamma\in(1,14/13)$. Let $\{(\dell_{\tau}\theta_{\kappa},\theta_{\kappa})|\kappa=1,2,\dots,N\}$ be the solution to $(\ref{delta-rescaled-lagrangian-euler-1})-(\ref{delta-rescaled-lagrangian-euler-3})$ on $[0,T]$, in the sense of Theorem \ref{local-well-posedness-theorem}, for some $M\geq2\ceil*{\alpha}+12$. Suppose the assumptions $\eqref{a-priori-smallness-mu-norm}$--$\eqref{a-priori-lower-bound-convex-combination-mu-centres}$ hold. Finally, let $M'\in\mathbb{Z}_{\geq0}$ be such that $M'<M$. Then for each $\kappa\in\{1,\dots,N\}$ and all $\tau\in[0,T]$, we have
			%\begin{align*}
			%\left\|\theta_{\kappa}\right\|_{\Y{M'}{\kappa}{\ngrad}}\lesssim\left\|\theta_{\kappa}\right\|_{\X{M'+1}_{\kappa}}.
			%\end{align*}
			%\end{lemma}
			
			\begin{lemma}\label{lower-order-estimates-lemma}Assume $\gamma=1+\frac{1}{n}$ for some $n\in\mathbb{Z}_{\geq2}$, or $\gamma\in(1,14/13)$. Let $\{(\dell_{\tau}\theta_{\kappa},\theta_{\kappa})|\kappa=1,2,\dots,N\}$ be the solution to $(\ref{delta-rescaled-lagrangian-euler-1})-(\ref{delta-rescaled-lagrangian-euler-3})$ on $[0,T]$, in the sense of Theorem \ref{local-well-posedness-theorem}, for some $M\geq2\ceil*{\alpha}+12$. Suppose the assumptions $\eqref{a-priori-smallness-mu-norm}$--$\eqref{a-priori-lower-bound-convex-combination-mu-centres}$ hold. Assume that $\nabla\tilde{W}_{\kappa}\in\X{M}_{\kappa}$ for $\kappa=1,2,\dots,N$. Let $(m,\underline{n},\underline{k})\in\mathbb{Z}_{\geq0}\times\mathbb{Z}_{\geq0}^{3}\times\mathbb{Z}_{\geq0}^{3}$ be such that $1\leq\max{\{m+|\underline{n}|,|\underline{k}|\}}\leq M$. Then for each $\kappa\in\{1,\dots,N\}$ and all $\tau\in[0,T]$, we have:
				%\begin{align}
				%\spaceI\chi \tilde{W}_{\kappa}^{\alpha+m}\left|\Ndell{m}{n}\left(\tilde{W}_{\kappa}\njac_{\kappa}^{c_{1}}\ninv[\kappa]^{s_{1}}\right)\right|^{2}dx+\spaceI\bar{\chi}\tilde{W}_{\kappa}^{\alpha}\left|\nabla^{\underline{k}}\left(\tilde{W}_{\kappa}^{c_{2}}\njac_{\kappa}^{c_{3}}\ninv[\kappa]^{s_{2}}\right)\right|^{2}dx\lesssim\left\|\nabla\mu_{\kappa}\right\|_{\X{M}_{\kappa}}^{2}+S_{M}(\tau),\label{lower-order-estimates-statement-1}
				%\end{align}
				\begin{align}
				\spaceI\chi \tilde{W}_{\kappa}^{1+\alpha+m}\left|\Ndell{m}{n}\left(\ninv[\kappa]\right)\right|^{2}dx+\spaceI\bar{\chi}\tilde{W}_{\kappa}^{1+\alpha}\left|\nabla^{\underline{k}}\left(\njac_{\kappa}^{-1}\right)\right|^{2}dx\lesssim\left\|\nabla\mu_{\kappa}\right\|_{\X{M}_{\kappa}}^{2}+ S_{M}(\tau),\label{lower-order-estimates-statement-2}
				\end{align}
				If instead we have $1\leq\max{\{m+|\underline{n}|,|\underline{k}|\}}\leq M/2$, then for each $\kappa\in\{1,\dots,N\}$ and all $\tau\in[0,T]$, we have:
				\begin{align}
				\left\|\tilde{W}_{\kappa}^{\frac{m}{2}}\Ndell{m}{n}\ninv[\kappa]\right\|_{L^{\infty}(\Omega)}^{2}+\left\|\tilde{W}_{\kappa}^{\frac{m}{2}}\Ndell{m}{n}\left(\njac_{\kappa}^{-1}\right)\right\|_{L^{\infty}(\Omega)}^{2}\lesssim\left\|\nabla\mu_{\kappa}\right\|_{\X{M}_{\kappa}}^{2}+ S_{M}(\tau).\label{lower-order-estimates-statement-3}
				\end{align}
			\end{lemma}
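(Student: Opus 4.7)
The plan is to treat $\ninv[\kappa]$ and $\njac_\kappa^{-1}$ as smooth nonlinear functions of $\ncov[\kappa]=\nabla\zeta_\kappa$, expand their higher derivatives via Fa\`{a} di Bruno / iterated Leibniz, and then bound the resulting products using the a priori $L^\infty$ control on $\ninv[\kappa]-\mathbb{I}$ and $\njac_\kappa-1$ together with the weighted Sobolev embedding from Lemma \ref{L-infinity-energy-space-bound}. Since $\zeta_\kappa=x+e^{-\tau}\bar x_\kappa+(1-e^{-\tau})\mu_\kappa+\theta_\kappa$, every derivative of $\ncov[\kappa]$ of order $\geq 1$ reduces to a sum of derivatives of $\nabla\mu_\kappa$ and $\nabla\theta_\kappa$, which are precisely the two quantities controlled by $\|\nabla\mu_\kappa\|_{\X{M}_\kappa}^2$ and $S_M(\tau)$.

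First I would iterate the identity \eqref{rescaled-inverse-differentiation-formula}, which says $\partial\ninv=-\ninv\,\partial\ncov\,\ninv$, to write, schematically,
\begin{equation*}
\Ndell{m}{n}\ninv[\kappa]
=\sum_{p\geq 1}\sum_{\substack{(a_i,\underline b_i)\\ \sum(a_i+|\underline b_i|)=m+|\underline n|}}
c_{(a_i,\underline b_i)}\;\ninv[\kappa]^{\otimes(p+1)}\,
\prod_{i=1}^{p}\Ndell{a_i}{b_i}\ncov[\kappa],
\end{equation*}
and similarly for $\nabla^{\underline k}\ninv[\kappa]$. An analogous expansion using \eqref{rescaled-jacobian-differentiation-formula} plus $\partial\njac_\kappa^{-1}=-\njac_\kappa^{-2}\partial\njac_\kappa$ (iterated) expresses $\nabla^{\underline k}\njac_\kappa^{-1}$ and $\Ndell{m}{n}\njac_\kappa^{-1}$ as a polynomial in $\njac_\kappa^{-1}$, $\ninv[\kappa]$ and the derivatives $\Ndell{a_i}{b_i}\ncov[\kappa]$. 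Each $\Ndell{a_i}{b_i}\ncov[\kappa]$ with $a_i+|\underline b_i|\geq 1$ equals $(1-e^{-\tau})\Ndell{a_i}{b_i}\nabla\mu_\kappa+\Ndell{a_i}{b_i}\nabla\theta_\kappa$ (the identity and the constant $e^{-\tau}\bar x_\kappa$ die under differentiation).

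Next, in any such product I would designate the factor of highest differential order (say $\Ndell{a_{p_\ast}}{b_{p_\ast}}\ncov[\kappa]$, with order $\geq (m+|\underline n|)/2$) as the $L^2$ factor, paired with the weight $\tilde W_\kappa^{(1+\alpha+m)/2}$; by the definition of $\|\cdot\|_{\X{M}_\kappa}$ its weighted $L^2$ norm is bounded by $\|\nabla\mu_\kappa\|_{\X{M}_\kappa}+\|\nabla\theta_\kappa\|_{\X{M-1}_\kappa}\lesssim\sqrt{\varepsilon_1}+\sqrt{S_M(\tau)}$. All remaining factors $\Ndell{a_i}{b_i}\ncov[\kappa]$ then have order $\leq M/2$, so Lemma \ref{L-infinity-energy-space-bound} (the weighted $L^\infty$ embedding, estimate \eqref{L-infinity-energy-space-bound-weights-statement-1}) bounds $\tilde W_\kappa^{a_i/2}\Ndell{a_i}{b_i}\ncov[\kappa]$ in $L^\infty(\Omega)$ by $\|\nabla\mu_\kappa\|_{\X{M}_\kappa}+\sqrt{S_M(\tau)}\lesssim 1$ under the a priori assumptions. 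The factors $\ninv[\kappa]$ and $\njac_\kappa^{-1}$ are uniformly bounded in $L^\infty$ thanks to \eqref{a-priori-bound-grad-zeta}--\eqref{a-priori-bound-jacobian}, so they contribute $O(1)$. Distributing the weight $\tilde W_\kappa^{\alpha+1+m}$ as $\tilde W_\kappa^{\alpha+1+a_{p_\ast}}\prod_{i\neq p_\ast}\tilde W_\kappa^{a_i}$ (which is permissible since $a_{p_\ast}+\sum_{i\neq p_\ast}a_i\leq m$ and $\tilde W_\kappa\in L^\infty$) yields \eqref{lower-order-estimates-statement-2}. The $L^\infty$ statement \eqref{lower-order-estimates-statement-3} is obtained along the same lines, except that in the regime $m+|\underline n|\leq M/2$ one may put \emph{every} factor, including the top-order one, into $L^\infty$ via \eqref{L-infinity-energy-space-bound-weights-statement-1}, which requires the order of each factor to be $\leq M/2$, a condition that is automatically satisfied.

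The main obstacle I expect is purely bookkeeping: tracking the combinatorial explosion produced by iterating \eqref{rescaled-inverse-differentiation-formula}--\eqref{rescaled-jacobian-differentiation-formula} and verifying that the weights distribute correctly so that exactly one factor carries the full pressure weight $\tilde W_\kappa^{1+\alpha+a_{p_\ast}}$ while the rest absorb only powers $\tilde W_\kappa^{a_i}$ compatible with the embedding \eqref{L-infinity-energy-space-bound-weights-statement-1}. Once this accounting is done, no analytic difficulty remains: the degeneracy near $\partial\Omega$ is handled as above on $\supp\chi$, and on $\supp\bar\chi$ we have $\tilde W_\kappa\sim 1$, so the proof is essentially a direct Leibniz expansion combined with the standard embedding and the a priori control of $\ninv[\kappa]$ and $\njac_\kappa$.
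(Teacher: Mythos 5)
Your proposal is correct and follows essentially the same route as the paper: iterate the differentiation formulae \eqref{rescaled-inverse-differentiation-formula}--\eqref{rescaled-jacobian-differentiation-formula} to reduce everything to products of $\ninv[\kappa]$, $\njac_{\kappa}^{-1}$ and derivatives of $\nabla\zeta_{\kappa}=(1-e^{-\tau})\nabla\mu_{\kappa}+\nabla\theta_{\kappa}+\mathbb{I}$, place the single highest-order factor in weighted $L^{2}$ and the rest in $L^{\infty}$ via Lemma \ref{L-infinity-energy-space-bound}, and use the a priori bounds \eqref{a-priori-bound-grad-zeta}--\eqref{a-priori-bound-jacobian} for the undifferentiated factors (the paper defers the $L^{2}$ bookkeeping to \cite{PaHaJa} and carries out the $L^{\infty}$ argument explicitly, exactly as you describe). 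The only minor imprecision is that the top-order $\nabla\theta_{\kappa}$ factor should be measured in the $\Y{M}{\kappa}{\ngrad}$ seminorm contained in $S_{M}$ rather than in $\X{M-1}_{\kappa}$, but this does not affect the argument.
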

			
			\begin{proof}[\textbf{Proof}]
				The full proof of $\eqref{lower-order-estimates-statement-2}$ can be adapted very easily from \cite{PaHaJa}. The main technical difficulties are for the integrals localised near the boundary, on $\supp{\chi}$, with regards to ensuring each term resulting from the application of the Leibniz rule to either $\Ndell{m}{n}\left(\ninv[\kappa]\right)$ or $\Ndell{m}{n}\left(\njac_{\kappa}^{-1}\right)$ appears with a high enough power of $\tilde{W}_{\kappa}$ to be bounded in either the $\X{M}_{\kappa}$ or $\Y{M}{\kappa}{\ngrad}$ norms.
				
				For the $L^{\infty}$ estimates in $\eqref{lower-order-estimates-statement-3}$, we first concentrate on the $\ninv[\kappa]$ term. Note that $(\ref{rescaled-inverse-differentiation-formula})$ gives
				\begin{align}
				\angdell\ninv[\kappa]^{k}_{i}=-\ninv[\kappa]^{k}_{j}\ninv[\kappa]^{s}_{i}\angdell\dell_{s}\zeta_{\kappa}^{j},\label{inverse-differentiation-rule-tangential-derivatives}
				\end{align}
				and similarly for $\rdell\ninv[\kappa]$. We apply $\eqref{inverse-differentiation-rule-tangential-derivatives}$ repeatedly to obtain
				\begin{align}
				\left\|\tilde{W}_{\kappa}^{m}\Ndell{m}{n}\ninv[\kappa]\right\|_{L^{\infty}(\Omega)}\lesssim\sum_{a+|\underline{b}|=1}^{m+|\underline{n}|}\left\|\ninv[\kappa]\right\|_{L^{\infty}(\Omega)}^{l_{a+\underline{b}}}\prod_{\substack{\nu_{1}+\dots+\nu_{a}=a\\\underline{\lambda}_{1}+\dots+\underline{\lambda}_{|\underline{b}|}=\underline{b}}}\left\|\tilde{W}_{\kappa}^{\nu_{j}}\rdell^{\nu_{j}}\angdell^{\underline{\lambda}_{i}}\left(\nabla\zeta_{\kappa}\right)\right\|_{L^{\infty}(\Omega)}^{l_{\nu_{j}+\underline{\lambda}_{i}}}\label{lower-order-estimates-statement-3-1}
				\end{align}
				where $l_{a+\underline{b}},l_{\nu_{j}+\underline{\lambda}_{i}}$ are nonnegative integers. By $(\ref{a-priori-bound-grad-zeta})$, we have $\left\|\ninv[\kappa]\right\|_{L^{\infty}(\Omega)}\lesssim1$. It is left to bound terms of the form $\left\|\tilde{W}_{\kappa}^{\nu}\rdell^{\nu}\angdell^{\underline{\lambda}}\left(\nabla\zeta_{\kappa}\right)\right\|_{L^{\infty}(\Omega)}$, for any $\nu\in\mathbb{Z}_{\geq0}$, $\underline{\lambda}\in\mathbb{Z}_{\geq0}^{3}$ such that $1\leq\nu+|\underline{\lambda}|\leq m+|\underline{n}|$. First we have
				\begin{align}
				\left\|\tilde{W}_{\kappa}^{\nu}\rdell^{\nu}\angdell^{\underline{\lambda}}\left(\nabla\zeta_{\kappa}\right)\right\|_{L^{\infty}(\Omega)}\lesssim\left\|\tilde{W}_{\kappa}^{\nu}\rdell^{\nu}\angdell^{\underline{\lambda}}\left(\nabla\theta_{\kappa}\right)\right\|_{L^{\infty}(\Omega)}+\left\|\tilde{W}_{\kappa}^{\nu}\rdell^{\nu}\angdell^{\underline{\lambda}}\left(\nabla\mu_{\kappa}\right)\right\|_{L^{\infty}(\Omega)}.\label{lower-order-estimates-statement-3-2}
				\end{align}
				For the $\theta_{\kappa}$ term, we write $\rdell^{\nu}\angdell^{\underline{\lambda}}\nabla$ as a linear combination of operators of the form $\nabla\rdell^{\nu'}\angdell^{\underline{\lambda}'}$, $\nu'+|\underline{\lambda}'|\leq\nu+|\underline{\lambda}|$ with smooth coefficients using $(\ref{angular-derivative-def}), (\ref{radial-derivative-def})$, and $(\ref{rectangular-as-ang-rad})$. Finally, employing $(\ref{L-infinity-energy-space-bound-weights-statement-2})$ and $\eqref{L-infinity-energy-space-bound-weights-statement-1}$ from Lemma $\ref{L-infinity-energy-space-bound}$ to the $\theta_{\kappa}$ and $\mu_{\kappa}$ terms respectively gives us
				\begin{align}
				\left\|\tilde{W}_{\kappa}^{\nu}\rdell^{\nu}\angdell^{\underline{\lambda}}\left(\nabla\zeta_{\kappa}\right)\right\|_{L^{\infty}(\Omega)}&\lesssim\left(\sum_{\nu'+|\underline{\lambda}'|\leq\nu+|\underline{\lambda}|}\left\|\tilde{W}_{\kappa}^{\nu}\nabla\rdell^{\nu'}\angdell^{\underline{\lambda}'}\left(\theta_{\kappa}\right)\right\|_{L^{\infty}(\Omega)}\right)+\left\|\tilde{W}_{\kappa}^{\nu}\rdell^{\nu}\angdell^{\underline{\lambda}}\left(\nabla\mu_{\kappa}\right)\right\|_{L^{\infty}(\Omega)}\nonumber\\
				&\lesssim\left\|\nabla\mu_{\kappa}\right\|_{\X{M}_{\kappa}}+\sqrt{S_{M}(\tau)}.\label{lower-order-estimates-statement-3-3}
				\end{align}
				The proof of the same bound for the $\njac_{\kappa}$ term is similar, where we utilise the differentiation formula $\eqref{rescaled-jacobian-differentiation-formula}$ instead.
			\end{proof}			
		
			\section{Commutators and Transformations of Derivatives}\label{commutators-transformation-of-derivatives}
			We recall that in Section $\ref{derivatives}$, we introduced our angular and radial derivatives, $\angdell$ and $\rdell$ and the various commutator identities $(\ref{commutator-ang-rad})-(\ref{commutator-rectangular-ang})$. In this appendix, we state how these commutators can be written as radial and angular derivatives, following \cite{HaJa3}. We also record how $\Ndell{m}{n}$ can be written as a sum of $\nabla^{\underline{k}}$ on an appropriate sub-domain of the unit ball $B_{1}$, and vice versa.
			
			\begin{lemma}[Higher order commutator identities]\label{higher-order-commutator-identities} Let $\dell_{s}$ be a rectangular derivative. Then for $\underline{n}\in\mathbb{Z}^{3}_{\geq0}$ with $|\underline{n}|>0$, we have
				\begin{align}
				\comm{\Ndell{m}{n}}{\dell_{s}}=\sum_{i=1}^{m+|\underline{n}|}\sum_{\substack{a+|\underline{b}|=i\\a\leq m+1}}\mathcal{K}_{s,i,a,\underline{b}}\Ndell{a}{b},\label{expansion-commutator-Ndell-rectangular}
				\end{align}
				where $\mathcal{K}_{s,i,a,\underline{b}}$ are smooth functions away from the origin. If $|\underline{n}|=0$, then we instead have
				\begin{align}
				\comm{\rdell^{m}}{\dell_{s}}=\sum_{i=1}^{m+|\underline{n}|}\sum_{\substack{a+|\underline{b}|=i\\a\leq m}}\mathcal{F}_{s,i,a,\underline{b}}\Ndell{a}{b},\label{expansion-commutator-rdell-rectangular}
				\end{align}
				where, again, $\mathcal{F}_{s,i,a,\underline{b}}$ are smooth functions away from the origin.
			\end{lemma}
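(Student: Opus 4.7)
I would prove both identities simultaneously by induction on the total order $N = m + |\underline{n}|$, using the basic commutator identities $(\ref{commutator-ang-rad})$--$(\ref{commutator-rectangular-ang})$ together with the decomposition $(\ref{rectangular-as-ang-rad})$ of rectangular derivatives into radial and angular pieces. The key point is that all coefficients arising from $(\ref{rectangular-as-ang-rad})$ involve factors of $x_i/r^2$, which are smooth away from the origin; this is why the $\mathcal{K}_{s,i,a,\underline{b}}$ and $\mathcal{F}_{s,i,a,\underline{b}}$ are smooth on $\Omega\setminus\{0\}$ rather than globally smooth, and this is compatible with the way the $\Ndell{m}{n}$ operators are used in the energy spaces (on $\supp\chi$, which excludes a neighborhood of the origin).

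\noindent For the base case $N = 1$, the identities reduce directly to $(\ref{commutator-rectangular-rad})$ and $(\ref{commutator-rectangular-ang})$: $[\rdell,\dell_s] = -\dell_s$ and $[\angdell_{ij},\dell_s] = -(\delta_{sj}\dell_i - \delta_{si}\dell_j)$. In both cases I then apply $(\ref{rectangular-as-ang-rad})$ to rewrite the resulting $\dell_s$, $\dell_i$, $\dell_j$ as linear combinations, with coefficients smooth on $\Omega\setminus\{0\}$, of $\rdell$ and the $\angdell_{lq}$. For $(\ref{expansion-commutator-rdell-rectangular})$ (the $|\underline{n}|=0$ case) no $\angdell$ is ever commuted past, so the resulting rectangular derivative sits to the right of $\rdell^{m-1}$, and its decomposition yields either $\rdell^m$ (giving $a = m$, $|\underline{b}| = 0$) or $\rdell^{m-1}\angdell_{lq}$ (giving $a = m-1$, $|\underline{b}| = 1$); in either case $a \leq m$.

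\noindent For the inductive step, I write $\Ndell{m}{n} = \rdell \cdot \Ndell{m-1}{n}$ (if $m\geq 1$) or $\Ndell{m}{n} = \angdell_{lq}\cdot \Ndell{m}{n'}$ (with $|\underline{n}'| = |\underline{n}|-1$). Expanding
\[
\comm{\Ndell{m}{n}}{\dell_s} = \comm{\rdell}{\dell_s}\Ndell{m-1}{n} + \rdell\,\comm{\Ndell{m-1}{n}}{\dell_s}
\]
(or the analogous identity with an $\angdell_{lq}$ out front) and invoking the induction hypothesis on the second term yields a sum of operators of the form $\rdell\cdot\mathcal{K}\cdot\Ndell{a}{b}$, and commuting $\rdell$ past $\mathcal{K}$ using $[\rdell,f] = (\rdell f)$ for scalar $f$ produces only smooth coefficients away from the origin and preserves the order. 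For the first term, the commutator $[\rdell,\dell_s] = -\dell_s$ (or $[\angdell_{lq},\dell_s]$) is again decomposed via $(\ref{rectangular-as-ang-rad})$. The only subtle point is tracking the radial index $a$: commuting past $\rdell$ leaves a single $\dell_s$, which contributes at most one $\rdell$, so $a$ grows by at most one compared with $m-1$, giving $a \leq m$ in the $|\underline{n}|=0$ case. Commuting past an $\angdell_{lq}$, on the other hand, can produce an extra rectangular derivative which, via $(\ref{rectangular-as-ang-rad})$, may introduce one additional $\rdell$; this is exactly what allows $a = m + 1$ in $(\ref{expansion-commutator-Ndell-rectangular})$, and why the two cases differ in their upper bound.

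\noindent The main bookkeeping obstacle is organising the inductive step so that the bound $a \leq m+1$ (respectively $a \leq m$) is stable under further applications. I would handle this by tracking separately, at each step, how many times an $\angdell$ has been commuted past: each such commutation can increase the radial count by at most one, but this happens at most once in the strictly upper $\rdell$-position, because subsequent applications of $(\ref{commutator-ang-ang})$--$(\ref{commutator-rectangular-ang})$ to reorder interior $\angdell$'s preserve the total order and introduce no new $\rdell$. A careful induction separating the first commutator (which either is or is not with an $\angdell_{lq}$) from the remaining ones then yields exactly the stated bounds.
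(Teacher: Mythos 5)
Your induction is correct, and it supplies a proof that the paper itself omits: for this lemma the paper simply defers to \cite{PaHaJa} (``The proofs of all three lemmas in this appendix can be found in \cite{PaHaJa}''), so there is no in-paper argument to compare against. Your route --- peeling one operator off the left, using $[AB,C]=[A,C]B+A[B,C]$, the base relations $\eqref{commutator-ang-rad}$--$\eqref{commutator-rectangular-ang}$, and the decomposition $\eqref{rectangular-as-ang-rad}$ --- is the standard one, and you correctly locate the source of the discrepancy between the bounds $a\leq m+1$ and $a\leq m$: only a commutator with an angular derivative produces a fresh rectangular derivative whose radial part via $\eqref{rectangular-as-ang-rad}$ can raise the $\rdell$-count past $m$, which cannot happen when $|\underline{n}|=0$. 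One small bookkeeping remark: when you peel off $\angdell_{lq}$ from the left while $m\geq1$, the operator $\angdell_{lq}\Ndell{m}{n'}$ is not in the canonical form $\rdell^{a}\angdell^{\underline{b}}$; this is harmless because $\comm{\angdell_{ij}}{\rdell}=0$ lets you slide the angular factor past $\rdell^{m}$, and reordering the angular block via $\eqref{commutator-ang-ang}$ only produces lower-order terms with the same radial count --- but it is cleaner to peel $\rdell$ whenever $m\geq1$ and reserve the angular peeling for $m=0$, which also makes the induction on the radial index transparent.
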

			
			\begin{lemma}\label{Ndell-to-Dk-lemma}Let $B_{1}$ be the closed unit ball in $\mathbb{R}^{3}$, and let $\Ndell{m}{n}$ and $\nabla^{\underline{k}}$ be defined as in $(\ref{Ndell-def})$ and $(\ref{Dk-def})$. Then we have
				\begin{align}
				\Ndell{m}{n}=\sum_{|\underline{k}|=0}^{m+|\underline{n}|}\mathcal{Q}_{\underline{k}}\nabla^{\underline{k}},\label{Ndell-to-Dk-statement}
				\end{align}
				for some $\mathcal{Q}_{\underline{k}}$, smooth on $B_{1}$.
			\end{lemma}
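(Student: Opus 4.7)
The plan is to proceed by induction on the total order $N := m+|\underline{n}|$ of the differential operator $\Ndell{m}{n}$, exploiting the fact that each building block $\rdell$ and $\angdell_{ij}$ is a first-order operator whose coefficients are \emph{polynomials} in the rectangular coordinates, hence smooth (in fact real-analytic) on all of $B_{1}$. The base case $N=0$ is trivial, with $\mathcal{Q}_{0} \equiv 1$. The induction hypothesis is precisely the statement of the lemma for all multi-indices with $m+|\underline{n}| \leq N$.

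For the inductive step, it suffices to check that applying either $\rdell$ or any $\angdell_{ij}$ to an operator of the form $\sum_{|\underline{k}|=0}^{N}\mathcal{Q}_{\underline{k}}\nabla^{\underline{k}}$, with $\mathcal{Q}_{\underline{k}}$ smooth on $B_1$, produces an operator of the same form but with the sum extended up to $|\underline{k}| = N+1$. Recalling the definitions
\begin{align*}
\rdell = x_{i}\dell_{i}, \qquad \angdell_{ij} = x_{i}\dell_{j} - x_{j}\dell_{i},
\end{align*}
each such operator can be written as $P^{l}(x)\dell_{l}$ where $P^{l}$ is a linear (hence smooth) polynomial. Applying the Leibniz rule gives
\begin{align*}
P^{l}(x)\dell_{l}\bigl(\mathcal{Q}_{\underline{k}}\nabla^{\underline{k}}F\bigr) = P^{l}(x)\bigl(\dell_{l}\mathcal{Q}_{\underline{k}}\bigr)\nabla^{\underline{k}}F + P^{l}(x)\mathcal{Q}_{\underline{k}}\dell_{l}\nabla^{\underline{k}}F,
\end{align*}
and both resulting coefficients $P^{l}\dell_{l}\mathcal{Q}_{\underline{k}}$ and $P^{l}\mathcal{Q}_{\underline{k}}$ are smooth on $B_{1}$. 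The first term contributes a derivative of order $|\underline{k}|\leq N$, while the second contributes order $|\underline{k}|+1 \leq N+1$, so after summing over $l$ and $\underline{k}$ and relabelling we obtain the required representation at order $N+1$. Since $\Ndell{m}{n}$ is by definition a composition of $m+|\underline{n}|$ such first-order operators (and the order of composition within $\rdell$'s and among the $\angdell_{ij}$'s does not matter for the conclusion, only the total count), this completes the induction.

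There is no genuine obstacle here; the only point to emphasise is that, in contrast to the decomposition $\eqref{rectangular-as-ang-rad}$ used elsewhere in the paper, the direction $\Ndell{m}{n}\to\nabla^{\underline{k}}$ does \emph{not} introduce any factors of $r^{-2}$, and so the coefficients $\mathcal{Q}_{\underline{k}}$ are smooth on the entire closed ball $B_{1}$ rather than merely on a region bounded away from the origin. This smoothness is the whole content of the lemma and is a direct consequence of the polynomial coefficients appearing in $\rdell$ and $\angdell_{ij}$.
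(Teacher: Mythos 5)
Your proof is correct: the induction on the total order, using that $\rdell$ and $\angdell_{ij}$ are first-order operators with polynomial (hence globally smooth) coefficients together with the Leibniz rule, is exactly the standard argument for this lemma, and the paper itself only cites \cite{PaHaJa} for it rather than giving details. Your closing remark correctly identifies the one substantive point — that this direction, unlike the converse decomposition $\eqref{rectangular-as-ang-rad}$, introduces no singular factors at the origin, which is why the $\mathcal{Q}_{\underline{k}}$ are smooth on all of $B_{1}$.
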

			There is also a partial converse to Lemma $\ref{Ndell-to-Dk-lemma}$.
			
			\begin{lemma}\label{Dk-to-Ndell-lemma}Let $B_{1}$ be the closed unit ball in $\mathbb{R}^{3}$, and let $\Ndell{m}{n}$ and $\nabla^{\underline{k}}$ be defined as in $(\ref{Ndell-def})$ and $(\ref{Dk-def})$. Then we have
				\begin{align}
				\nabla^{\underline{k}}=\sum_{m+|\underline{n}|=0}^{|\underline{k}|}\mathcal{Z}_{(m,\underline{n})}\Ndell{m}{n},\label{Dk-to-Ndell-statement}
				\end{align}
				for some functions $\mathcal{Z}_{(m,\underline{n})}$, smooth on any region removed from the origin in $B_{1}$.
			\end{lemma}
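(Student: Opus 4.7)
The plan is to prove Lemma \ref{Dk-to-Ndell-lemma} by induction on $|\underline{k}|$, using the pointwise decomposition $(\ref{rectangular-as-ang-rad})$ of a single rectangular derivative into radial and angular components as the base engine. The case $|\underline{k}|=0$ is trivial (identity operator), and the case $|\underline{k}|=1$ is exactly $(\ref{rectangular-as-ang-rad})$, i.e.\ $\dell_i = \frac{x_j}{r^2}\angdell_{ji} + \frac{x_i}{r^2}\rdell$, whose coefficients $x_j/r^2$ are smooth away from the origin. This provides the seed.

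For the inductive step, I would fix $|\underline{k}|=K+1$, write $\nabla^{\underline{k}} = \dell_i \nabla^{\underline{k}'}$ with $|\underline{k}'|=K$, and apply the inductive hypothesis
\begin{align*}
\nabla^{\underline{k}'} = \sum_{m+|\underline{n}|=0}^{K}\mathcal{Z}^{\prime}_{(m,\underline{n})}\Ndell{m}{n}
\end{align*}
with $\mathcal{Z}^{\prime}_{(m,\underline{n})}$ smooth away from the origin. Applying $\dell_i$ via Leibniz yields a sum of terms of the form $(\dell_i \mathcal{Z}^{\prime}_{(m,\underline{n})})\Ndell{m}{n}$, which have the right structure, plus terms of the form $\mathcal{Z}^{\prime}_{(m,\underline{n})}\dell_i \Ndell{m}{n}$ that must be recast. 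On the second family I would substitute $\dell_i = \frac{x_j}{r^2}\angdell_{ji} + \frac{x_i}{r^2}\rdell$ and then commute $\angdell_{ji}$ and $\rdell$ through $\Ndell{m}{n}$.

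The main technical point (and the only nontrivial step) is controlling these commutations so that the output stays in the $\Ndell{a}{b}$ family with $a+|\underline{b}|\leq K+1$ and with smooth-away-from-origin coefficients. Here the relations $(\ref{commutator-ang-rad})$ $[\angdell_{ij},\rdell]=0$ and $(\ref{commutator-ang-ang})$ $[\angdell_{ij},\angdell_{jk}]=\angdell_{ik}$ are exactly what is needed: $\rdell$ commutes with every $\angdell$ factor, so $\rdell\,\Ndell{m}{n} = \Ndell{m+1}{n}$; and $\angdell_{ji}$ can be moved past the $\angdell_{pq}$ factors in $\Ndell{m}{n}$ at the cost of additional terms of the form $\angdell_{\cdot\cdot}\angdell^{\underline{n}'}$ with $|\underline{n}'|<|\underline{n}|$, each of which is itself an $\Ndell{m}{n''}$ with $m+|\underline{n}''|\leq m+|\underline{n}|$. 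After at most $|\underline{n}|$ such commutations one pushes $\angdell_{ji}$ to the left, producing $\Ndell{m}{n+e_{ji}}$ up to lower-order $\Ndell{a}{b}$ pieces. All resulting coefficients are either constants, smooth factors $x_j/r^2$, or derivatives of $\mathcal{Z}^{\prime}_{(m,\underline{n})}$, so they remain smooth on any region excluding the origin.

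Collecting everything and grouping by the $\Ndell{a}{b}$ label closes the induction and produces an expansion of the required form
\begin{align*}
\nabla^{\underline{k}} = \sum_{m+|\underline{n}|=0}^{|\underline{k}|}\mathcal{Z}_{(m,\underline{n})}\Ndell{m}{n},
\end{align*}
with coefficients $\mathcal{Z}_{(m,\underline{n})}$ smooth away from $0$, as desired. The bookkeeping of indices in the commutation step is the only place where care is needed; the smoothness claim is essentially automatic since every coefficient introduced is a polynomial in $x_1,x_2,x_3$ divided by a power of $r^2$.
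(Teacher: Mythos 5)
Your induction is correct: the seed is exactly the decomposition $(\ref{rectangular-as-ang-rad})$, the step $\rdell\,\Ndell{m}{n}=\Ndell{m+1}{n}$ follows from $(\ref{commutator-ang-rad})$, and reordering the extra angular factor via $(\ref{commutator-ang-ang})$ only produces lower-order $\Ndell{a}{b}$ terms with coefficients that are rational in $x$ with denominators a power of $r^{2}$, hence smooth away from the origin. The paper itself defers the proof to \cite{PaHaJa}, and your argument is essentially the standard one used there, so there is nothing to flag.
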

			The proofs of all three lemmas in this appendix can be found in \cite{PaHaJa}, for example.
			
			\section{Hardy-Type Inequality and Hardy-Sobolev Embeddings}\label{hardy-sobolev-embeddings}
			One of the main tools we use is a higher order Hardy-type embedding which tells us that a weighted Sobolev space on a domain can be realised in a Sobolev space of lower regularity, in essence sacrificing regularity to remove degeneracy near the boundary. 
			%As before $\underline k=(k_1,k_2,k_3)\in\mathbb Z^3$.
			
			\begin{definition}\label{sobolev-norm}For a bounded domain $\mathcal{O}\subset \mathbb R^3$, and $s\in\mathbb{Z}_{\geq0}$, define the Sobolev space $H^{s}(\mathcal{O})$ by
				\begin{align*}
				H^{s}(\mathcal{O})=\left\{F\in L^{2}(\mathcal{O}): \nabla^{\underline{k}}F \ \text{ is weakly in } \ L^2(\mathcal O)
				%	\int_{\mathcal{O}}\left|\nabla^{\underline{k}}F\right|^{2}dx<\infty
				, \ \underline{k}\in\mathbb Z_{\ge0}^3, \ 0\leq |\underline{k}|\leq b\right\},
				\end{align*}
				with norm given by
				\begin{align*}
				\left\|F\right\|_{H^{s}(\mathcal{O})}^{2}=\sum_{|\underline{k}|=0}^{b}\int_{\mathcal{O}} \left|\nabla^{\underline{k}}F\right|^{2}dx.
				\end{align*}
				%In addition, we denote by $\dot{H}^{s}_{0}(\mathcal{O})$ the collection of functions whose derivatives up to order $s$ are bounded in the $L^{2}$ norm, and are $0$ on $\dell\mathcal{O}$.
				The definition of $H^{s}(\mathcal{O})$ can be extended to $s\in\mathbb{R}_{\geq0}$ by interpolation.
			\end{definition}
			
			\begin{definition}\label{weighted-sobolev-norm}For a bounded domain $\mathcal{O}\subset \mathbb R^3$, $\alpha>0$ and $b\in\mathbb{Z}_{\geq0}$, define the weighted Sobolev space $H^{\alpha,b}$ by
				\begin{align*}
				H^{\alpha,b}(\mathcal{O})=\left\{d^{\frac{\alpha}{2}}F\in L^{2}(\mathcal{O}): 
				\nabla^{\underline{k}}F \ \text{ is weakly in } \ L^2(\mathcal O, d^\alpha dx),
				%	\int_{\mathcal{O}}\ d^{\alpha}\left|\nabla^{\underline{k}}F\right|^{2}dx<\infty, 
				\ \underline{k}\in\mathbb Z_{\ge0}^3, \ 0\leq |\underline{k}|\leq b\right\},
				\end{align*}
				with norm given by
				\begin{align*}
				\left\|F\right\|_{H^{\alpha,b}(\mathcal{O})}^{2}=\sum_{|\underline{k}|=0}^{b}\int_{\mathcal{O}} d^{\alpha}\left|\nabla^{\underline{k}}F\right|^{2}dx,
				\end{align*}
				where $d=d(x,\dell\mathcal{O})$ is the distance function to the boundary on $\mathcal{O}$.
			\end{definition}
			
			%In practice it is usually understood that we are working on $\Omega=B_{1}$, the closed unit ball in $\mathbb{R}^{3}$, and in this case we abbreviate the notation for these spaces to $H^{s}$ and $H^{\alpha,b}$ respectively. Moreover, we adopt the convention that
			%\begin{align}
			%\left\|F\right\|_{H^{s}(B_{1})}=\left\|F\right\|_{s}.\label{sobolev-norm-for-ball}
			%\end{align} 
			
			\noindent Given these definitions, we have the following embedding.
			
			\begin{lemma}{\label{hardy-type-embedding}} Let $\mathcal O$ be as above and let $b\in\mathbb Z_{>0}$ and $0<\alpha\le 2b$. Then the Banach space $H^{\alpha,b}(\mathcal{O})$ embeds continuously in to $H^{b-\frac{\alpha}{2}}(\mathcal{O})$.
			\end{lemma}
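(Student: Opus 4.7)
The plan is to prove the embedding first for \emph{even integer} exponents $\alpha=2j$, $0\le j\le b$, by iterating a one-dimensional Hardy inequality in the direction normal to $\partial\mathcal O$, and then to extend to all $\alpha\in(0,2b]$ by real interpolation of the resulting family of continuous embeddings. Since the claim is local in nature (the only delicate region is a neighbourhood of $\partial\mathcal O$; on any interior set $d\sim 1$ so the weighted and unweighted norms are comparable), we may use a finite cover of $\partial\mathcal O$ together with a partition of unity to straighten the boundary locally and reduce to the model half-space $\mathbb R^3_+=\{x_3>0\}$, in which $d(x,\partial\mathbb R^3_+)=x_3$.

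The key analytic input will be the one-dimensional Hardy inequality in the form
\begin{align*}
\int_0^\infty s^{\sigma-2} |f(s)|^2\, ds \lesssim \int_0^\infty s^\sigma |f'(s)|^2\, ds + \int_0^\infty |f(s)|^2\, ds,\qquad \sigma>1,
\end{align*}
which, integrated over the tangential variables, immediately yields the multi-dimensional weighted Hardy inequality
\begin{align*}
\int_{\mathcal O} d^{\sigma-2} |F|^2\, dx \lesssim \int_{\mathcal O} d^\sigma |\nabla F|^2\, dx + \int_{\mathcal O} |F|^2\, dx,\qquad \sigma>1.
\end{align*}
Applied successively to $F, \nabla F, \nabla^2 F,\dots$, this trades one power of the weight for one derivative at each step. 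Iterating the Hardy inequality $j$ times on each $\nabla^{\underline k}F$ with $|\underline k|\le b-j$ (the constraint $\sigma>1$ is preserved throughout because at step $l$ the exponent is $2j-2l\ge 2$ as long as $l<j$, while the final step $l=j$ produces the unweighted integral directly) gives the integer-case embedding
\begin{align*}
\|F\|_{H^{b-j}(\mathcal O)}^2 \lesssim \|F\|_{H^{2j,b}(\mathcal O)}^2,\qquad j=0,1,\dots,b.
\end{align*}

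To reach arbitrary $\alpha\in(0,2b]$, write $\alpha=2j+2\theta$ with $j\in\{0,\dots,b-1\}$ and $\theta\in[0,1)$. The weighted space $H^{\alpha,b}(\mathcal O)$ is, by construction, the real interpolation space
$$H^{\alpha,b}(\mathcal O)=\bigl(H^{2j,b}(\mathcal O),H^{2(j+1),b}(\mathcal O)\bigr)_{\theta,2},$$
as can be checked from Definition \ref{weighted-sobolev-norm} and the fact that the map $F\mapsto d^{\alpha/2}F$ identifies these weighted spaces isometrically with ordinary $L^2$-based Sobolev spaces on the half-space (where the interpolation identity reduces to Stein's classical characterisation of weighted $L^2$-Sobolev spaces). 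Since the Sobolev scale $H^s(\mathcal O)$ is already defined by interpolation from the integer rungs, and since we established the two endpoint embeddings
\begin{align*}
H^{2j,b}(\mathcal O)\hookrightarrow H^{b-j}(\mathcal O),\qquad H^{2(j+1),b}(\mathcal O)\hookrightarrow H^{b-j-1}(\mathcal O),
\end{align*}
the interpolation functor yields the desired continuous embedding $H^{\alpha,b}(\mathcal O)\hookrightarrow H^{b-\alpha/2}(\mathcal O)$ with the natural constant.

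The main obstacle is the interpolation step: verifying that the weighted space $H^{\alpha,b}$ defined pointwise through the measure $d^\alpha\,dx$ really coincides with the real interpolation space of the two neighbouring integer-weighted spaces, rather than merely sitting continuously between them. This requires a careful application of the reiteration theorem together with the characterisation of $L^2(d^\alpha\,dx)$ as an interpolation space between $L^2(d^{2j}\,dx)$ and $L^2(d^{2j+2}\,dx)$, and depends on the Lipschitz regularity of $\partial\mathcal O$ so that $d$ behaves, up to constants, like a single coordinate near the boundary after the flattening map. All remaining steps (the Hardy inequality, the partition-of-unity reduction, and the iteration) are then standard.
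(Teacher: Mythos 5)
The paper does not actually prove this lemma: it states that the proof is standard and cites \cite{KMP}. Your outline reconstructs what is essentially that standard proof, and the even-integer part of it is sound. Localising near $\partial\mathcal O$, flattening, and iterating the one-dimensional Hardy inequality
\begin{align*}
\int_0^1 s^{\sigma-2}|f|^2\,ds\lesssim \int_0^1 s^{\sigma}|f'|^2\,ds+\int_0^1|f|^2\,ds,\qquad \sigma>1,
\end{align*}
does give $H^{2j,b}(\mathcal O)\hookrightarrow H^{b-j}(\mathcal O)$ for $0\le j\le b$: the exponents encountered are $\sigma=2,4,\dots,2j$, all strictly greater than $1$, and the derivative count never exceeds $b$. (Your indexing of the "final step" is slightly off, and on the unbounded half-line the inequality as written is delicate, but on the bounded localised pieces the argument is correct.)

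The genuine gap is the interpolation step for non-integer $\alpha$. The identity $H^{2j+2\theta,b}(\mathcal O)=\bigl(H^{2j,b}(\mathcal O),H^{2j+2,b}(\mathcal O)\bigr)_{\theta,2}$ is not "by construction": $H^{\alpha,b}$ is defined by the weighted integrals of Definition \ref{weighted-sobolev-norm}, and what your argument actually requires is the continuous inclusion $H^{\alpha,b}\hookrightarrow \bigl(H^{2j,b},H^{2j+2,b}\bigr)_{\theta,2}$, i.e.\ a $K$-functional estimate producing, for each $t>0$, a decomposition $F=F_0+F_1$ with $\|F_0\|_{H^{2j,b}}+t\|F_1\|_{H^{2j+2,b}}$ suitably controlled. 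Being sandwiched between the two endpoint spaces does not give this. Moreover, the justification you offer --- that $F\mapsto d^{\alpha/2}F$ identifies the weighted Sobolev spaces isometrically with unweighted ones, reducing the claim to the classical characterisation of weighted $L^2$ interpolation --- is false for $b\ge1$: multiplication by $d^{\alpha/2}$ does not commute with $\nabla$, and the commutator generates terms carrying negative powers of $d$ that are not controlled by the $H^{\alpha,b}$ norm (the identification holds only at the $L^2$ level $b=0$). So the fractional case needs either a genuine proof of the interpolation identity (for instance a decomposition by mollification at scale comparable to $d(x,\partial\mathcal O)$) or a citation to a specific interpolation theorem for power-weighted Sobolev spaces --- which is, in effect, what the paper does by deferring to \cite{KMP}. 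Note that the fractional case cannot be avoided here, since the lemma is invoked in Lemma \ref{L-infinity-energy-space-bound} with weight exponent $\alpha+\lceil\alpha\rceil+2j+4$, which is non-integer whenever $\gamma\in(1,14/13)$ yields a non-integer $\alpha$.
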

			
			\noindent The proof of Lemma~\ref{hardy-type-embedding} is standard and can be found in~\cite{KMP}.
			
			%We also have a standard Sobolev embedding that we use repeatedly.
			
			%\begin{lemma}\label{L-infinity-H-2-embedding}
			%	Let $\mathcal{O}\subset\mathbb{R}^{3}$ be a bounded domain. Then we have the continuous embedding of Banach spaces $H^{2}(\mathcal{O})\hookrightarrow L^{\infty}(\mathcal{O})$.
			%\end{lemma} 
			Finally, we state the Hardy-Sobolev bounds for $L^\infty$-norm in terms of our energy norms.
			%we use these results to relate $L^{\infty}$ bounds of terms with few enough derivatives to our energy spaces. 
			Recall the spaces $\X{b}_{\kappa}$ and $\Y{b}{\kappa}{\ngrad}$ with their associated norm and semi-norm respectively, given in Definition $\ref{energy-function-spaces-definition}$. We also recall the definitions of $\chi$ and $\bar{\chi}$ given in~$\eqref{cutoff-function}$ and~$\eqref{cutoff-function-conjugate}$.
			
			\begin{lemma}\label{L-infinity-energy-space-bound} Let $a\in\mathbb{Z}_{\geq0}$ and $\underline{b}\in\mathbb{Z}_{\geq0}^{3}$, and let $a+|\underline{b}|=j$. Then for each $\kappa\in\{1,2,\dots,N\}$,
				\begin{align}
				\left\|\Ndell{a}{b}F\right\|_{L^{\infty}(\Omega)}\lesssim \left\|F\right\|_{\X{\ceil*{\alpha}+2j+4}_{\kappa}}.\label{L-infinity-energy-space-bound-no-weights-statement-1}
				\end{align}

				\begin{align}
				\left\|\tilde{W}_{\kappa}^{\frac{a}{2}}\Ndell{a}{b}F\right\|_{L^{\infty}(\Omega)}\lesssim \left\|F\right\|_{\X{\ceil*{\alpha}+j+6}_{\kappa}}.\label{L-infinity-energy-space-bound-weights-statement-1}
				\end{align}
				
				\begin{align}
				\left\|\tilde{W}_{\kappa}^{\frac{a}{2}}\nabla\Ndell{a}{b}F\right\|_{L^{\infty}(\Omega)}\lesssim \left\|F\right\|_{\Y{\ceil*{\alpha}+j+6}{\kappa}{\ngrad}}.\label{L-infinity-energy-space-bound-weights-statement-2}
				\end{align}
			\end{lemma}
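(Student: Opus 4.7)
The plan is to establish all three inequalities via a partition-of-unity argument, handling the interior ($\supp \bar\chi$) and a collar of the vacuum boundary ($\supp \chi$) separately. On the interior the weight $\tilde{W}_{\kappa}$ is bounded above and below by positive constants, so the $\X{b}_{\kappa}$ norm is equivalent there to the standard $H^{b}$ norm and $\Y{b}{\kappa}{\ngrad}$ is equivalent to $H^{b+1}$. Rewriting $\Ndell{a}{b}F$ on $\supp \bar\chi$ via Lemma \ref{Ndell-to-Dk-lemma} as $\sum_{|\underline{k}|\leq j}\mathcal{Q}_{\underline{k}}\nabla^{\underline{k}}F$ with smooth coefficients, and invoking the standard three-dimensional Sobolev embedding $H^{2}\hookrightarrow L^{\infty}$, controls each of the three left-hand sides on the interior by $\|F\|_{\X{j+2}_{\kappa}}$ or $\|F\|_{\Y{j+2}{\kappa}{\ngrad}}$, comfortably below the budgets $\ceil*{\alpha}+2j+4$ and $\ceil*{\alpha}+j+6$ prescribed in the statement.

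The boundary contributions on $\supp \chi$ reduce to the Hardy-type embedding $H^{\alpha,B}(\mathcal{O})\hookrightarrow H^{B-\alpha/2}(\mathcal{O})$ of Lemma \ref{hardy-type-embedding}, chained with $H^{2}\hookrightarrow L^{\infty}$; this demands $B-\alpha/2\geq 2$, so that $B = \ceil*{\alpha}+4$ suffices in every case. To establish \eqref{L-infinity-energy-space-bound-weights-statement-1}, I set $G \coloneqq \tilde{W}_{\kappa}^{a/2}\Ndell{a}{b}F$ and estimate
\begin{equation*}
\|G\|_{H^{\alpha,B}(\supp\chi)}^{2} = \sum_{|\underline{k}|\leq B}\int \chi\, d_\Omega^{\alpha}\,|\nabla^{\underline{k}}G|^{2}\,dx.
\end{equation*}
Expanding $\nabla^{\underline{k}}$ by the Leibniz rule and rewriting $\nabla^{\underline{k}'}\Ndell{a}{b}F$ in terms of $\Ndell{m}{n}F$ using Lemma \ref{Dk-to-Ndell-lemma} and the commutator identities of Lemma \ref{higher-order-commutator-identities}, I check that every resulting term is pointwise dominated by $\tilde{W}_{\kappa}^{\alpha+m}|\Ndell{m}{n}F|^{2}$ for some $m+|\underline{n}|\leq B+j$: the extra powers of $\tilde{W}_{\kappa}$ generated by differentiating $\tilde{W}_{\kappa}^{a/2}$ precisely balance the shift in weight exponent. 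This yields $\|G\|_{H^{\alpha,B}}\lesssim\|F\|_{\X{B+j}_{\kappa}}$, and the Hardy-Sobolev chain then closes \eqref{L-infinity-energy-space-bound-weights-statement-1} with derivative count $\ceil*{\alpha}+j+6$. Statement \eqref{L-infinity-energy-space-bound-weights-statement-2} is identical but applied to $\tilde{W}_{\kappa}^{a/2}\nabla\Ndell{a}{b}F$, with the extra $\nabla$ handled by switching from $\X{\cdot}_{\kappa}$ to $\Y{\cdot}{\kappa}{\ngrad}$ on the right-hand side.

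Statement \eqref{L-infinity-energy-space-bound-no-weights-statement-1} lacks the absorbing factor $\tilde{W}_{\kappa}^{a/2}$, so a direct repeat of the above scheme would fail to match weight exponents. The remedy is to insert the missing weight by applying a one-dimensional Hardy inequality in the radial variable $a$ times, each invocation exchanging one power of $\tilde{W}_{\kappa}^{1/2}$ for one additional $\rdell$-derivative (boundary terms at $r=3/4$ are trivially controlled by the interior regime already dispatched). After $a$ applications the estimate reduces to a weighted version of the same shape as \eqref{L-infinity-energy-space-bound-weights-statement-1}, at the cost of $a$ additional derivatives; since $j = a+|\underline{b}|$ and hence $j+a\leq 2j$, the total derivative budget is $\ceil*{\alpha}+2j+4$, matching the statement.

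The main obstacle is the bookkeeping: one has to organize the Leibniz expansions of $\nabla^{\underline{k}}(\tilde{W}_{\kappa}^{a/2}\Ndell{a}{b}F)$ and the commutators from Lemmas \ref{higher-order-commutator-identities}, \ref{Dk-to-Ndell-lemma} so that every resulting term fits inside the ambient energy norm with no unaccounted negative powers of $\tilde{W}_{\kappa}$, and confirm that the final Sobolev exponent $B-\alpha/2$ exceeds $3/2$. The explicit additive constants $+4$ and $+6$ in the statement are precisely the buffers absorbing these Leibniz offsets and the Sobolev requirement, which is why the numerics need not be tracked tightly and the proof reduces, in the end, to direct applications of Lemma \ref{hardy-type-embedding} together with standard Sobolev embeddings.
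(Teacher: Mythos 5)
Your interior argument and your general plan (split via $\chi,\bar\chi$, convert between $\Ndell{m}{n}$ and $\nabla^{\underline k}$, chain Lemma \ref{hardy-type-embedding} with $H^{2}\hookrightarrow L^{\infty}$) match the paper for the estimate \eqref{L-infinity-energy-space-bound-no-weights-statement-1}. But your treatment of the weighted estimates \eqref{L-infinity-energy-space-bound-weights-statement-1}--\eqref{L-infinity-energy-space-bound-weights-statement-2} near the boundary contains a genuine gap. You claim that in $\|G\|^{2}_{H^{\alpha,B}}$ with $G=\tilde W_{\kappa}^{a/2}\Ndell{a}{b}F$, ``the extra powers of $\tilde W_{\kappa}$ generated by differentiating $\tilde W_{\kappa}^{a/2}$ precisely balance the shift in weight exponent.'' They do not: differentiating $\tilde W_{\kappa}^{a/2}$ \emph{lowers} the power of $\tilde W_{\kappa}$, and even the term where no derivatives fall on the weight fails. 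Concretely, for $|\underline k|=B$ that term is $\int d_{\Omega}^{\alpha}\tilde W_{\kappa}^{a}\left|\nabla^{\underline k}\Ndell{a}{b}F\right|^{2}dx$, which involves up to $a+B$ radial derivatives of $F$ but carries only the weight $\tilde W_{\kappa}^{\alpha+a}$; the $\X{\cdot}_{\kappa}$ norm supplies only $\tilde W_{\kappa}^{\alpha+a+B}$ for such terms, and since $\tilde W_{\kappa}^{\alpha+a}\gg\tilde W_{\kappa}^{\alpha+a+B}$ near $\dell\Omega$ the required domination $\tilde W_{\kappa}^{\alpha+a}\lesssim\tilde W_{\kappa}^{\alpha+m}$ with $m=a+B$ goes the wrong way. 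This mismatch is structural: the crude Hardy--Sobolev chain, applied with the maximal admissible weight, is exactly what yields the $2j$ loss in \eqref{L-infinity-energy-space-bound-no-weights-statement-1}; it cannot produce the improved count $j+6$ of \eqref{L-infinity-energy-space-bound-weights-statement-1}. The paper does not attempt this: on $\supp\chi$ it invokes the sharper standard weighted $L^{\infty}$ embeddings of \cite{KMP,Jang2014}, and only uses the crude chain for the unweighted bound and on the interior.

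Two secondary points. First, the paper proves \eqref{L-infinity-energy-space-bound-no-weights-statement-1} directly (Lemma \ref{Ndell-to-Dk-lemma}, Sobolev, then Lemma \ref{hardy-type-embedding} applied to $H^{\alpha+\ceil*{\alpha}+2j+4,\,\ceil*{\alpha}+2j+4}$, where the single large weight is dominated by every $\tilde W_{\kappa}^{\alpha+m}$ appearing in the energy norm); your route instead reduces it to \eqref{L-infinity-energy-space-bound-weights-statement-1} via iterated one-dimensional Hardy inequalities, so it inherits the gap above, and even granting \eqref{L-infinity-energy-space-bound-weights-statement-1} your count gives $\ceil*{\alpha}+(j+a)+6\leq\ceil*{\alpha}+2j+6$, which overshoots the stated index $\ceil*{\alpha}+2j+4$. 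Second, the step ``each invocation exchanging one power of $\tilde W_{\kappa}^{1/2}$ for one additional $\rdell$-derivative'' is asserted rather than set up, and the direction of the exchange (removing degeneracy at the cost of derivatives) is the opposite of what your reduction needs. To repair the proof you should either reproduce the weighted embedding arguments of \cite{KMP,Jang2014} for $\supp\chi$, or simply cite them as the paper does.
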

		
			\begin{proof}[\textbf{Proof}]
				Fix $\kappa\in\{1,2,\dots,N\}$. We begin with $\eqref{L-infinity-energy-space-bound-no-weights-statement-1}$. First, using Lemma $\ref{Ndell-to-Dk-lemma}$ and the Sobolev embedding $H^{2}\hookrightarrow L^{\infty}$,we have
				\begin{align}
				\left\|\Ndell{a}{b}F\right\|_{L^{\infty}(\Omega)}\lesssim\left\|F\right\|_{H^{2+j}(\Omega)}.\label{L-infinity-energy-space-bound-1}
				\end{align}
				Then, we use Lemma $\ref{hardy-type-embedding}$ to obtain
				\begin{align}
				\left\|F\right\|_{H^{2+j}(\Omega)}\lesssim\left\|F\right\|_{H^{\alpha+\ceil*{\alpha}+2j+4,\ceil*{\alpha}+2j+4}(\Omega)}\label{L-infinity-energy-space-bound-2}
				\end{align}
				having noted that
				\begin{align*}
				2+j\leq\ceil*{\alpha}+2j+4-\left(\frac{\alpha+\ceil*{\alpha}+2j+4}{2}\right).
				\end{align*}
				Using $\eqref{translated-W-and-d-equivalence}$, we have
				\begin{align}
				\left\|F\right\|_{H^{\alpha+\ceil*{\alpha}+2j+4,\ceil*{\alpha}+2j+4}(\Omega)}^{2}&=\sum_{|\underline{k}|=0}^{\ceil*{\alpha}+2j+4}\int_{\Omega} d_{\Omega}^{\alpha+\ceil*{\alpha}+2j+4}\left|\nabla^{\underline{k}}F\right|^{2}dx\nonumber\\
				&\lesssim\sum_{|\underline{k}|=0}^{\ceil*{\alpha}+2j+4}\int_{\Omega} \tilde{W}_{\kappa}^{\alpha+\ceil*{\alpha}+2j+4}\left|\nabla^{\underline{k}}F\right|^{2}dx,\label{L-infinity-energy-space-bound-3}
				\end{align}
				where $d_{\Omega}(x)=d(x,\dell\Omega)$, and $\tilde{W}_{\kappa}$ is given in $\eqref{definition-of-translated-W}$. Using $\chi+\bar{\chi}=1$, and Lemma $\ref{Dk-to-Ndell-lemma}$, we have
				\begin{align}
				\sum_{|\underline{k}|=0}^{\ceil*{\alpha}+2j+4}\int_{\Omega} \tilde{W}_{\kappa}^{\alpha+\ceil*{\alpha}+2j+4}\left|\nabla^{\underline{k}}F\right|^{2}dx&\lesssim\sum_{m+|\underline{n}|=0}^{\ceil*{\alpha}+2j+4}\int_{\Omega} \chi\tilde{W}_{\kappa}^{\alpha+\ceil*{\alpha}+2j+4}\left|\Ndell{m}{n}F\right|^{2}dx+\sum_{|\underline{k}|=0}^{\ceil*{\alpha}+2j+4}\int_{\Omega} \bar{\chi}\tilde{W}_{\kappa}^{\alpha+\ceil*{\alpha}+2j+4}\left|\nabla^{\underline{k}}F\right|^{2}dx\nonumber\\
				&\lesssim\sum_{m+|\underline{n}|=0}^{\ceil*{\alpha}+2j+4}\int_{\Omega} \chi\tilde{W}_{\kappa}^{\alpha+m}\left|\Ndell{m}{n}F\right|^{2}dx+\sum_{|\underline{k}|=0}^{\ceil*{\alpha}+2j+4}\int_{\Omega} \bar{\chi}\tilde{W}_{\kappa}^{\alpha}\left|\nabla^{\underline{k}}F\right|^{2}dx\nonumber\\
				&=\left\|F\right\|_{\X{\ceil*{\alpha}+2j+4}}^{2},\label{L-infinity-energy-space-bound-4}
				\end{align}
				where the second bound is because $\tilde{W}_{\kappa}$ is bounded on $\Omega$, and the last line is by definition. Bounds $\eqref{L-infinity-energy-space-bound-1}$--$\eqref{L-infinity-energy-space-bound-4}$ give $\eqref{L-infinity-energy-space-bound-no-weights-statement-1}$.
				
				For $\eqref{L-infinity-energy-space-bound-weights-statement-1}$ we first have
				\begin{align}
				\left\|\tilde{W}_{\kappa}^{\frac{a}{2}}\Ndell{a}{b}F\right\|_{L^{\infty}(\Omega)}\lesssim\left\|\tilde{W}_{\kappa}^{\frac{a}{2}}\Ndell{a}{b}F\right\|_{L^{\infty}(\supp{\chi})}+\left\|\tilde{W}_{\kappa}^{\frac{a}{2}}\Ndell{a}{b}F\right\|_{L^{\infty}(\supp{\bar{\chi}})}.\label{L-infinity-energy-space-bound-5}
				\end{align}
				The estimate
				\begin{align}
				\left\|\tilde{W}_{\kappa}^{\frac{a}{2}}\Ndell{a}{b}F\right\|_{L^{\infty}(\supp{\chi})}\lesssim\left\|F\right\|_{\X{\ceil*{\alpha}+j+6}_{\kappa}}\label{L-infinity-energy-space-bound-6}
				\end{align}
				is standard and can be found in~\cite{KMP,Jang2014}. For the estimate on $\supp{\bar{\chi}}$, we have 
				\begin{align}
				\left\|\tilde{W}_{\kappa}^{\frac{a}{2}}\Ndell{a}{b}F\right\|_{L^{\infty}(\supp{\bar{\chi}})}\lesssim\left\|F\right\|_{H^{2+j}(\supp{\bar{\chi}})}\label{L-infinity-energy-space-bound-7}
				\end{align}
				by first bounding $\tilde{W}_{\kappa}^{\alpha}$ in $L^{\infty}$, and then using Lemma $\ref{Ndell-to-Dk-lemma}$ and the $H^{2}\hookrightarrow L^{\infty}$. embedding. Then we have
				\begin{align}
				\left\|F\right\|_{H^{2+j}(\supp{\bar{\chi}})}^{2}= \sum_{|\underline{k}|=0}^{2+j}\int_{\supp{\bar{\chi}}\cap\supp{\chi}}\chi \left|\nabla^{\underline{k}}F\right|^{2}dx+\sum_{|\underline{k}|=0}^{2+j}\int_{\supp{\bar{\chi}}}\bar{\chi}\left|\nabla^{\underline{k}}F\right|^{2}dx.\label{L-infinity-energy-space-bound-8}
				\end{align}
				On both $\supp{\bar{\chi}}$ and $\supp{\bar{\chi}}\cap\supp{\chi}$, $\tilde{W}_{\kappa}\sim1$. Moreover, since $\supp{\bar{\chi}}\cap\supp{\chi}$ is removed from the origin, we can apply Lemma $\ref{Dk-to-Ndell-lemma}$. Therefore, we have the bound
				\begin{align}
				\left\|F\right\|_{H^{2+j}(\supp{\bar{\chi}})}^{2}&\lesssim\sum_{m+|\underline{n}|=0}^{2+j}\int_{\supp{\bar{\chi}}\cap\supp{\chi}}\chi\tilde{W}_{\kappa}^{\alpha+m} \left|\Ndell{m}{n}F\right|^{2}dx+\sum_{|\underline{k}|=0}^{2+j}\int_{\supp{\bar{\chi}}}\bar{\chi}\tilde{W}_{\kappa}^{\alpha}\left|\nabla^{\underline{k}}F\right|^{2}dx\nonumber\\
				&=\left\|F\right\|_{\X{2+j}_{\kappa}}^{2}\lesssim\left\|F\right\|_{\X{\ceil*{\alpha}+j+6}_{\kappa}}^{2}.\label{L-infinity-energy-space-bound-9}
				\end{align}
				Bounds $\eqref{L-infinity-energy-space-bound-5}$--$\eqref{L-infinity-energy-space-bound-9}$ give $\eqref{L-infinity-energy-space-bound-weights-statement-1}$. The proof for $\eqref{L-infinity-energy-space-bound-weights-statement-2}$ requires similar estimates as for $\eqref{L-infinity-energy-space-bound-weights-statement-1}$ on $\supp{\chi}$ and $\supp{\bar{\chi}}$. The $\supp{\chi}$ bound can be found in~\cite{KMP,Jang2014} as for $\eqref{L-infinity-energy-space-bound-weights-statement-1}$, and the bound for $\supp{\bar{\chi}}$ follows the same strategy as $\eqref{L-infinity-energy-space-bound-7}$--$\eqref{L-infinity-energy-space-bound-9}$, requiring the additional fact that $\njac_{\kappa}\sim1$ on $\Omega$ due to $\eqref{a-priori-bound-jacobian}$. This completes the proof of the lemma.
			\end{proof}
			
			%\begin{color}{red}We will also require the following higher order Hardy-type inequality.\end{color}
			%\begin{lemma}\label{hardy-type-inequality-lemma} Let $\mathcal{O}$ be a domain, and let $s\in\mathbb{Z}_{\geq2}$. Let $F\in H^{s}(\mathcal{O})\cap\dot{H}^{1}_{0}(\mathcal{O})$. Let $d_{\mathcal{O}}$ be a function such that $d_{\mathcal{O}}(x)\sim d(x,\dell\mathcal{O})$ close enough to $\dell\mathcal{O}$, and is in $H^{s'}(\mathcal{O})$ for $s'\in\max{(s-1,3)}$. Then $F/d_{\mathcal{O}}\in H^{s-1}(\mathcal{O})$, and
			%	\begin{align}
			%\left\|\frac{F}{d_{\mathcal{O}}}\right\|_{H^{s-1}(\mathcal{O})}\lesssim\left\|F\right\|_{H^{s}(\mathcal{O})}.\label{hardy-type-inequality-statement}
			%\end{align}
			%\end{lemma}
		\end{appendices}

\end{document}